\documentclass[reqno, 11pt]{amsart}
\usepackage{amsmath}
\usepackage{amsthm}
\usepackage{amsfonts}
\usepackage{amssymb}
\usepackage{mathrsfs}
\usepackage{epsfig}
\usepackage{slashed}
\usepackage{mathtools}
\usepackage{enumerate}

\newcommand{\mb}{\mathbf}
\newcommand{\mc}{\mathcal}

\newcommand{\rg}{\mathrm{rg}\,}
\newcommand{\N}{\mathbb{N}}
\newcommand{\R}{\mathbb{R}}
\newcommand{\C}{\mathbb{C}}

\newcommand{\B}{\mathbb{B}^d}

\newcommand{\Ceven}{C_{e}^{\infty}[0,1]}

\newcommand{\bs}[1]{\boldsymbol{#1}}
\newcommand{\D}{D_d}
\newcommand{\K}{K_d}
\newcommand{\del}{\nabla_{\text{rad}}}

\newcommand{\rst}[1]{\ensuremath{{\mathbin |}%
\raise-.5ex\hbox{$#1$}}} 

\newcommand{\norm}[1]{{\left\vert\kern-0.25ex\left\vert\kern-0.25ex\left\vert #1 
    \right\vert\kern-0.25ex\right\vert\kern-0.25ex\right\vert}}

\hyphenation{Min-kows-ki}
\newtheorem{lemma}{Lemma}[section]
\newtheorem{theorem}[lemma]{Theorem}
\newtheorem{corollary}[lemma]{Corollary}
\newtheorem{proposition}[lemma]{Proposition}
\theoremstyle{remark}
\newtheorem{remark}{Remark}

\theoremstyle{definition}
\newtheorem{definition}[lemma]{Definition}

\numberwithin{equation}{section}

\setlength\parindent{0pt} 
\title[Blowup for wave equations on $\R^{d+1}$]{Stable blowup for wave equations in odd space dimensions}

\author{Roland Donninger}
\address{Rheinische Friedrich-Wilhelms-Universit\"at Bonn, 
Mathematisches Institut,  Endenicher Allee 60, D-53115 Bonn, Germany}
\email{donninge@math.uni-bonn.de}

\author{Birgit Sch\"orkhuber}
\address{Universit\"at Wien, Fakult\"at f\"ur Mathematik,
Oskar-Morgenstern-Platz 1, A-1090 Vienna, Austria}
\email{birgit.schoerkhuber@univie.ac.at}

\thanks{Roland Donninger is supported by the Alexander von Humboldt Foundation via
a Sofja Kovalevskaja Award endowed by the German Federal Ministry of Education
and Research. Birgit Sch\"orkhuber is supported by the Austrian Science Fund
(FWF) via the Hertha Firnberg Program, Project Nr. T739-N25. Both authors would like to thank the Hausdorff
Research Institute for Mathematics in Bonn for the hospitality during the Trimester
Program 'Harmonic Analysis and Partial Differential Equations'. Partial support by the Deutsche Forschungsgemeinschaft (DFG), CRC 1060 'The Mathematics of Emergent Effects' is also gratefully acknowledged.}

\begin{document}
\begin{abstract}
We consider semilinear wave equations with focusing power nonlinearities in odd space dimensions $d \geq 5$. We prove that for every $p > \frac{d+3}{d-1}$  there exists an open set of radial initial data in $H^{\frac{d+1}{2}} \times H^{\frac{d-1}{2}}$ such that the corresponding solution exists in a backward lightcone and approaches the ODE blowup profile. The result covers  the entire range of energy supercritical nonlinearities and extends our previous work for the three--dimensional radial wave equation to higher space dimensions. 
\end{abstract}

\maketitle

\section{Introduction}

We consider the initial value problem for the focusing nonlinear wave equation
\begin{align}\label{Eq:NLW} 
\begin{split}
& \partial^2_{t} u- \Delta u = |u|^{p-1} u, \\
& u|_{t=0} = u_0, \quad  \partial_t u|_{t=0}= u_1,
\end{split}
\end{align}
for $(t,x) \in I \times \R^{d}$, $d = 2k+1$, $k \geq 2$ and $I$ an interval, where $0 \in I$.  Eq.~\eqref{Eq:NLW} is conformally invariant for $p = \frac{d+3}{d-1}$ and we restrict ourselves to the \textit{superconformal} case 
\begin{align}\label{Eq:superconf}
 p > \frac{d+3}{d-1}.
 \end{align}
The above equation enjoys scaling invariance in the sense that if $u$ solves Eq.~\eqref{Eq:NLW} then another solution can be obtained by setting $u_{\lambda}(t,x) := \lambda^{-\frac{2}{p-1}} u(t/\lambda, x/\lambda)$ for $\lambda > 0$. The conserved energy is given by
\[ E(u(t,\cdot), \partial_t u(t,\cdot)) = \tfrac{1}{2} \|(u(t,\cdot), \partial_t u(t,\cdot)) \|^2_{\dot H^1 \times L^2(\R^d)} - \tfrac{1}{p+1} \| u(t,\cdot) \|^{p+1}_{L^{p+1}(\R^d)} \]
and it is invariant under the above scaling for $p = \frac{d+2}{d-2}$, which defines the \textit{energy critical} case. In general, the scaling invariant Sobolev spaces are $\dot H^{s_p}\times \dot H^{s_p-1}(\R^d)$, where the index $s_p = \frac{d}{2} - \frac{2}{p-1}$ is usually referred to as the critical regularity.

\subsection{Basic well-posedness theory and explicit blowup solutions} 
One is usually interested in (strong) solutions of Eq.~\eqref{Eq:NLW} that satisfy the equation in integral form by using Duhamel's principle, see for example \cite{Tao}. In this sense, Eq.~\eqref{Eq:NLW} is locally well-posed in $\dot H^{s_p} \times \dot H^{s_p-1}(\R^d)$ for $d \geq 5$ and $p > \frac{d+3}{d-1},$ given that the nonlinearity is sufficiently regular, cf.~ Lindblad and Sogge \cite{LindbladSogge95}. Moreover, solutions that correspond to sufficiently small initial data can be extended globally in time. We also note that local well-posedness in $\dot H^{s} \times \dot H^{s-1}(\R^d)$ for $s > \frac{d}{2}$ and smooth nonlinearities is classical \cite{Tao}. However, global well-posedness does not hold in general. A convexity argument by Levine \cite{Lev74} shows that initial data with negative energy (and finite $L^2-$norm) lead to blowup in finite time, cf.~ also \cite{KilStoVis12} for generalizations.

Explicit examples for singularity formation can be obtained by considering the so called ODE blowup solution 
\begin{align}\label{Eq:BlowUpSol}
 	 u_T(t,x)= c_p (T-t)^{-\frac{2}{p-1}},  \quad   c_p := \left [\tfrac{2(p+1)}{(p-1)^2} \right]^{\frac{1}{p-1}},
\end{align}
which is independent of the space dimension and solves the ordinary differential equation $u_{tt} = |u|^{p-1} u$ for $p >1$. By finite speed of propagation one can use $u_T$ to construct compactly supported smooth initial data such that the solution blows up as $t \to T$. 

In one space dimension the ODE blowup mechanism is universal, cf.~the fundamental work by Merle and Zaag \cite{MerZaa07}, \cite{MerZaa08}, \cite{MerZaa12a}, \cite{MerZaa12b} and the references therein. In higher dimensions, the situation is more complex. Depending on $d$ and $p$ many other explicit examples for singular solutions were found in the past years, including the celebrated work of Krieger, Schlag and Tataru \cite{KriSchTat09} on \textit{type II} blowup solutions for the energy critical equation in three space dimensions, see below. For $d=3$, $p=3$ and $p\geq 7$ an odd integer, it was proved by Bizo\'n, Breitenlohner, Maison and Wasserman \cite{BBMW10}, \cite{BMW07} that Eq.~\eqref{Eq:NLW} admits infinitely many radial \textit{self-similar} blowup solutions of the form $(T-t)^{-\frac{2}{p-1}} f_n(\frac{|x|}{T-t})$, $n \in \N_0$, with $u_T$ corresponding to the groundstate, i.e., $f_0 = c_p$. Another blowup mechanism for Eq.~\eqref{Eq:NLW}, which only exists for $d \geq 11$ and a range of supercritical nonlinearities $p > p(d) > \frac{d+2}{d-2}$, was recently established by Collot \cite{Collot2014}, see below.

Most of these explicit solutions have unstable directions, i.e., they are unstable under generic small perturbations and are not supposed to describe the 'typical' blowup behavior for solutions of Eq.~\eqref{Eq:NLW}, see for example \cite{KriNahas12}. On the other hand, numerical experiments by Bizo\'n, Chmaj and Tabor \cite{BCT04} for the three--dimensional equation show that the behavior of generic radial blowup solutions can be characterized in terms of the ODE blowup solution locally around the blowup point.  

The stability of $u_T$ in three space dimensions was established in our previous works \cite{DonSch12}, \cite{DonSch13} for radial perturbations  and all $p >1$. Recently, we could extend this to the general case (without symmetry) \cite{DonSch14b} for $p>3$. For subconformal nonlinearities the dynamics around $u_T$ were also investigated by Merle and Zaag \cite{MerZaa13a} in the non-radial setting and in arbitrary space dimensions.

In view of the findings in \cite{Collot2014} for supercritical radial wave equations in high space dimensions, we extend our previous results and establish the stability of the ODE blowup solution in arbitrary odd space dimensions. Although for $d=3$ we were able to drop the symmetry assumption, it is an open question how this can be accomplished for $d \geq 5$, see the discussion below. We therefore restrict ourselves to the radial case and study solutions that blow up at the origin (which is the most interesting case). 

We note that this work is not a mere technical generalization of \cite{DonSch12}, \cite{DonSch13}. It can rather be viewed as a systematization and refinement of our approach that has also been applied (with slight modifications) to establish stable self-similar blowup for equivariant wave maps \cite{DonSchAic12}, \cite{DonSch12} and Yang-Mills fields \cite{Don11} in supercritical dimensions.

\subsection{Radial solutions in lightcones}

In the following we use the abbreviation $u[t]:=(u(t,\cdot),\partial_t u(t,\cdot))$. We are interested in the behavior of radial solutions of Eq.~\eqref{Eq:NLW} in backward lightcones  
\[ \Gamma_{T}(\R^d) := \{ (t,x) \in [0,T) \times \R^d:  |x| \leq T - t  \},\]
with vertex $(T,0)$ for $T>0$. Consequently, a suitable concept of (strong) solutions in lightcones is required. This can be obtained for example by combining the classical Duhamel formula on $\R^d$ with suitable cut-off techniques, see \cite{KilStoVis12}. Here, we pursue another approach which is based on the formulation of Eq.~\eqref{Eq:NLW} in self-similar coordinates 
\[\tau := -\log(T-t) + \log T, \quad \xi := \frac{x}{T-t}.\]
To motivate the following, let  $u \in C^{\infty}(\Gamma_{T})$ be a radial solution of Eq.~\eqref{Eq:NLW}. By setting
\begin{align}\label{Def:psi}
\begin{split}
\psi_1^{T}( -\log(T-t) + \log T,\tfrac{x}{T-t})&:=(T-t)^{\frac{2}{p-1}}u(t,x),\\
\psi_2^{T}( -\log(T-t) + \log T,\tfrac{x}{T-t})&:=(T-t)^{\frac{2}{p-1}+1} \partial_t u(t,x),
\end{split}
\end{align} 
we obtain a smooth solution of the first order system
\begin{align}
\label{Eq:CssSystem}
\begin{split}
\partial_{\tau} \psi_1^{T}&=\psi_2^{T}  -\xi \cdot \nabla \psi_1^{T}
-\tfrac{2}{p-1}\psi_1^{T}, \\
\partial_{\tau}  \psi_2^{T}&=\Delta \psi_1^{T}
-\xi \cdot \nabla  \psi_2^{T}-\tfrac{p+1}{p-1}\psi_2^{T}
+\psi_1^{T} |\psi_1^{T}|^{p-1}.
\end{split}
\end{align}
By setting $\Psi^{T} (\tau) := (\psi^{T}_1(\tau,\cdot) ,\psi^{T}_2(\tau,\cdot) )$ this can be written as
\begin{align*}
\partial_{\tau} \Psi^{T} (\tau) = \mc L_0 \Psi^T(\tau) + \mb F(\Psi^T(\tau)),
\end{align*}
where $\mc L_0 $ represents the linear part of the right hand side of Eq.~\eqref{Eq:CssSystem}. 
To formulate the following statement we define for $k \in \N_0$
 \[ H_{\mathrm{rad}}^k(\B) := \{ u \in H^k(\B): u \text{ is radial} \}.  \]

\begin{proposition}\label{Prop1}
Let  $\mathfrak H := H_{\mathrm{rad}}^{\frac{d+1}{2}} \times H_{\mathrm{rad}}^{\frac{d-1}{2}}(\B)$. There is a dense domain $\mc D(\mc L_0) \subset \mathfrak H$ such that the operator $\mc L_0: \mc D(\mc L_0) \subset \mathfrak H \to \mathfrak H$ is closable and its closure generates a strongly continuous one parameter semigroup $\{\mc S_0(\tau): \tau \geq 0 \}$ of bounded operators on $\mathfrak H$.
\end{proposition}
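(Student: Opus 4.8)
The plan is to realize $\mc L_0$ as a perturbation of a model operator whose semigroup properties are transparent, and then invoke a bounded-perturbation argument. First I would split the linear part as $\mc L_0 = \tilde{\mc L}_0 + \mc L'$, where $\tilde{\mc L}_0$ collects the transport terms $-\xi\cdot\nabla - \tfrac{2}{p-1}$ in the first component and $-\xi\cdot\nabla - \tfrac{p+1}{p-1}$ in the second, together with the Laplacian $\Delta$ mapping the first component into the second, and where $\mc L'$ is the off-diagonal piece $\psi_1\mapsto\psi_2$ appearing in the first equation. The term $\mc L'$ is manifestly bounded on $\mathfrak H = H^{\frac{d+1}{2}}_{\mathrm{rad}}\times H^{\frac{d-1}{2}}_{\mathrm{rad}}(\B)$ (it is just an inclusion $H^{\frac{d-1}{2}}\hookrightarrow H^{\frac{d-1}{2}}$ composed with the identity $H^{\frac{d+1}{2}}\hookrightarrow H^{\frac{d-1}{2}}$), so by the bounded perturbation theorem it suffices to prove the proposition for $\tilde{\mc L}_0$. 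One should also absorb the first-order term $-\tfrac{2}{p-1}\psi_1$ in a similar way, or keep it since it only contributes a multiple of the identity and shifts the generator by a bounded amount.

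Next I would study the remaining operator, which up to bounded terms is governed by the generator of the dilation (scaling) semigroup on the ball, coupled through $\Delta$. Concretely, define $\mc D(\mc L_0)$ to be, say, $\big(C^\infty_{\mathrm{rad}}(\overline{\B})\big)^2$, or smooth radial functions; on this domain compute, using integration by parts and the divergence theorem on $\B$, the quadratic form $\mathrm{Re}\,\langle \tilde{\mc L}_0 \mb u, \mb u\rangle_{\mathfrak H}$. The transport generator $A := -\xi\cdot\nabla - c$ satisfies, for an appropriate choice of the constant $c$ and of an \emph{equivalent} inner product on $H^k_{\mathrm{rad}}(\B)$, a bound $\mathrm{Re}\,\langle A u, u\rangle \le \omega\|u\|^2$; the key point is that the characteristics of $\xi\cdot\nabla$ flow outward toward $\partial\B$, so on the forward flow mass leaves the ball and no boundary conditions are needed — this is exactly the mechanism that makes the operator one-sided bounded (dissipative after a shift) rather than skew-adjoint. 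The coupling term $\Delta\psi_1$ in the second equation, paired against $\psi_2$ in $H^{\frac{d-1}{2}}$, must be matched against the "extra" derivative available in the first component ($H^{\frac{d+1}{2}}$ versus $H^{\frac{d-1}{2}}$); choosing the inner product on $\mathfrak H$ so that these two contributions cancel (or are controlled) after integration by parts is the crucial bookkeeping step. Once one has $\mathrm{Re}\,\langle \tilde{\mc L}_0 \mb u, \mb u\rangle_{\mathfrak H} \le \omega\|\mb u\|_{\mathfrak H}^2$ on the domain, one concludes that $\tilde{\mc L}_0 - \omega$ is dissipative.

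To apply the Lumer–Phillips theorem I then need the range condition: $\lambda - \tilde{\mc L}_0$ has dense range in $\mathfrak H$ for some (hence all) $\lambda > \omega$. For the transport operator this amounts to solving an ODE along characteristics with the outflow geometry, which can be done explicitly by integrating $\xi\cdot\nabla$ backward from the boundary, giving a genuine solution for smooth data; adding back $\Delta$ turns this into an elliptic-type problem on $\B$ that is solvable on a dense set (e.g. by the Lax–Milgram theorem applied componentwise, or by noting that the resolvent of the decoupled diagonal operator plus a Neumann-series argument in the coupling handles it). Having verified dissipativity (after the shift) and the range condition, Lumer–Phillips gives that the closure of $\tilde{\mc L}_0$ generates a strongly continuous semigroup of operators bounded by $e^{\omega\tau}$; the bounded perturbation theorem then transfers this to $\mc L_0$, proving the proposition.

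The main obstacle is the choice of the equivalent inner product on $\mathfrak H$ that simultaneously (i) makes the transport part dissipative up to a shift — which requires tracking how many times $\xi\cdot\nabla$ must be commuted past the derivatives defining the $H^k$ norm and controlling the lower-order commutator terms — and (ii) makes the $\Delta$-coupling between the two components cancel or be absorbed. In the radial setting one can work with the variable $\rho=|\xi|\in[0,1]$ and express everything through the radial Laplacian, which keeps the commutator computations manageable, but the bookkeeping for half-integer-indexed spaces $H^{\frac{d\pm1}{2}}$ (equivalently, integer-indexed in the odd-dimensional radial reduction) is where the real work lies; the density of the domain and the range condition are comparatively routine once the right norm is fixed.
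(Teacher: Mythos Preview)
Your decomposition has a concrete error: the piece $\mc L'(\psi_1,\psi_2)=(\psi_2,0)$ is \emph{not} bounded on $\mathfrak H = H^{\frac{d+1}{2}}_{\mathrm{rad}}\times H^{\frac{d-1}{2}}_{\mathrm{rad}}(\B)$, since it asks $\psi_2\in H^{\frac{d-1}{2}}$ to land in the first slot, which requires the $H^{\frac{d+1}{2}}$ norm. (Your parenthetical about ``inclusion $H^{\frac{d-1}{2}}\hookrightarrow H^{\frac{d-1}{2}}$ composed with $H^{\frac{d+1}{2}}\hookrightarrow H^{\frac{d-1}{2}}$'' goes the wrong way.) So the bounded-perturbation reduction fails, and you must treat the full first-order system at once. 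This is not cosmetic: the $\psi_2$ term in the first row is exactly what produces the energy cancellation with the $\Delta\psi_1$ term in the second row, and removing it destroys the structure you need for dissipativity.

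Beyond that, your plan is at the right level of abstraction (Lumer--Phillips with a custom inner product, plus a range/density check), but the ``crucial bookkeeping step'' you flag is in fact the whole proof, and you have not supplied the mechanism. The paper's argument rests on a specific radial descent operator
\[
D_d u(\rho)=\Big(\rho^{-1}\tfrac{d}{d\rho}\Big)^{\frac{d-3}{2}}\!\big(\rho^{d-2}u(\rho)\big),
\]
which intertwines the $d$-dimensional radial Laplacian with the one-dimensional second derivative ($D_d\Delta_\rho u=(D_d u)''$) and conjugates the full operator $\mc L_0$ to a one-dimensional wave operator in similarity coordinates. The equivalent norm is then built from $\|D_d u_1\|_{\dot H^2}$, $\|D_d u_2\|_{\dot H^1}$, \emph{and} a boundary term $|[D_d u_1]'(1)+D_d u_2(1)|^2$; the boundary term is essential for the dissipativity estimate and is what encodes the outgoing-characteristic mechanism you allude to. With this norm one gets $\mathrm{Re}\,(\mc L_0\mb u\,|\,\mb u)_D\le -\tfrac{2}{p-1}\|\mb u\|_D^2$ by a direct one-dimensional integration by parts, and the range condition is solved by an explicit ODE integration in the one-dimensional picture. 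None of this is visible from the standard $H^k(\B)$ inner product or from commuting $\xi\cdot\nabla$ with derivatives, so the ``bookkeeping'' you anticipate would not close without the $D_d$ reduction.
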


This is an immediate consequence of the results proved in Section \ref{Sec:LinWellPosed}.
By Duhamel's principle we can now formulate the above equation as an abstract integral equation
\begin{align}\label{Eq:IntegralIntro}
\Psi^{T} (\tau) = \mc  S_0(\tau) \Psi^T(0) + \int_0^{\tau}  \mc S_0(\tau - \tau') \mb F(\Psi^T(\tau')) d\tau'.
\end{align}

We take this as a defining equation for our notion of strong lightcone solutions.

\begin{definition}
We say that $u$: $\Gamma_T(\R^d)  \to \R$ is a radial $(H^{\frac{d+1}{2}}$--)solution of Eq.~\eqref{Eq:NLW} if the corresponding $\Psi^T$ belongs to $C([0,\infty), \mathfrak H)$ and satisfies Eq.~\eqref{Eq:IntegralIntro} for all $\tau \geq 0$.
\end{definition}

\begin{definition}
Let $(u_0,u_1) \in  H_{\mathrm{rad}}^{\frac{d+1}{2}} \times H_{\mathrm{rad}}^{\frac{d-1}{2}}(\R^d)$. We say that $T > 0$ belongs to $\mc T(u_0,u_1) \subseteq (0,\infty)$ if there exists a solution $u$: $\Gamma_T(\R^d)  \to \R$ to Eq.~\eqref{Eq:NLW} with $u[0]=(u_0,u_1)|_{\B_T}$. Set
\[ T_{(u_0,u_1) } := \sup \{\mc T(u_0,u_1)  \cup \{0\} \}. \]
If $T_{(u_0,u_1) } < \infty$, we call $T_{(u_0,u_1) }$ the \textit{blowup time at the origin}.
\end{definition}

\subsection{The main result}\label{Sec:MainResult}  We prove the stability of the ODE blowup solution in the following sense.
\begin{theorem}\label{Th:Main} For $d= 2k+1$, $k \in \N$, $k \geq 2$, fix $p > \frac{d+3}{d-1}$ and $T_0 > 0$. There are constants $M, \delta > 0$ such that if $u_0,u_1$ are radial functions with
\begin{align}\label{Eq:CondData}
\|(u_0,u_1)-  u_{T_0}[0] \|_{H^{\frac{d+1}{2}} \times H^{\frac{d-1}{2}}(\B_{T_0+\delta})}  < \tfrac{\delta}{M}
\end{align}
the following statements hold:
\begin{enumerate}[i)]
\item The blowup time at the origin $T :=T_{(u_0,u_1) }$ is contained in the interval $[T_0 - \delta, T_0 + \delta]$. 
\item The solution 
$u: \Gamma_T(\R^d)  \to \R$ satisfies  
\begin{align}\label{Eq:DecayRates_1}
\begin{split}
(T-t)^{\frac{2}{p-1}} \| u(t,\cdot) - u_T(t,\cdot) \|_{L^{\infty}(\B_{T-t})}  \lesssim (T-t)^{\mu_p}, \\
(T- t)^{-s_p} \|u(t,\cdot) - u_T(t,\cdot) \|_{L^2(\B_{T-t}) }  \lesssim (T-t)^{\mu_p}, \\
(T- t)^{-s_p + 1}  \|u[t] - u_T[t] \|_{\dot H^1 \times  L^{2}(\B_{T-t}) }  \lesssim (T-t)^{\mu_p}, \\
\end{split}
\end{align}
for $s_p = \frac{d}{2} - \frac{2}{p-1}$, $\mu_p = \mathrm{min} \{\frac{2}{p-1},1 \} - \varepsilon$ and $\varepsilon >0$ small. Furthermore, for $j = 2, \dots, \frac{d+1}{2}$,
\begin{align}\label{Eq:DecayRates_2}
\begin{split}
(T- t)^{-s_p + j}  \|u[t] \|_{\dot H^j \times \dot H^{j-1}(\B_{T-t}) }  \lesssim  (T-t)^{\mu_p}. \\
\end{split}
\end{align}
\end{enumerate}
\end{theorem}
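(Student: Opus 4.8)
The plan is to follow the self--similar strategy the authors developed in the three--dimensional case. In the coordinates $(\tau,\xi)$ the ODE blowup solution $u_{T}$ corresponds to the \emph{constant} solution $\Psi^*=(c_p,\tfrac{2}{p-1}c_p)$ of \eqref{Eq:CssSystem}, which is static because $\mc L_0\Psi^*+\mb F(\Psi^*)=0$. Writing $\Psi^T=\Psi^*+\Phi$ one obtains $\partial_\tau\Phi=\mc L\Phi+\mb N(\Phi)$ with $\mc L=\mc L_0+\mc L'$, where $\mc L'=D\mb F(\Psi^*)$ multiplies the first component by the constant $pc_p^{p-1}$ and puts the result in the second slot, and $\mb N(\Phi)=\mb F(\Psi^*+\Phi)-\mb F(\Psi^*)-\mc L'\Phi$. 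Since $\Psi^*$ is constant and strictly positive, $c_p+\phi_1$ stays away from the singularity of $z\mapsto z|z|^{p-1}$ for small $\phi_1$, so $\mb N$ vanishes to second order at $0$: using that $H^{\frac{d+1}{2}}(\B)$ is a Banach algebra embedded in $C(\overline{\B})$ together with Moser--type estimates, $\|\mb N(\Phi)-\mb N(\tilde\Phi)\|_{\mathfrak H}\lesssim(\|\Phi\|_{\mathfrak H}+\|\tilde\Phi\|_{\mathfrak H})\|\Phi-\tilde\Phi\|_{\mathfrak H}$ on small balls of $\mathfrak H$. By Proposition \ref{Prop1} the operator $\mc L_0$ generates the semigroup $\mc S_0$, and since $\mc L'$ is bounded, $\mc L$ generates a strongly continuous semigroup $\mc S$ on $\mathfrak H$.

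The core is the spectral analysis of $\mc L$. Since $\mc L'$ maps $\mathfrak H$ into $\{0\}\times H^{\frac{d+1}{2}}(\B)$, it is compact on $\mathfrak H$ by Rellich's theorem, so $\mc L$ and $\mc L_0$ have the same essential spectrum; combined with the growth estimates for $\mc S_0$ from Section \ref{Sec:LinWellPosed}, this shows that the spectrum of $\mc L$ in the half--plane $\{\Re\lambda\ge-\omega\}$ — with $\omega=\min\{\tfrac{2}{p-1},1\}-\varepsilon$, $\varepsilon>0$ small — consists of at most finitely many isolated eigenvalues of finite multiplicity. Differentiating the family $\{u_T\}$ with respect to $T$ produces an explicit eigenfunction $\mb g$ of $\mc L$ with eigenvalue $1$ (the gauge mode of the blowup--time symmetry), and one must show that this is the \emph{only} spectral point of $\mc L$ with $\Re\lambda\ge-\omega$. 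Here the constancy of the profile is decisive: the radial reduction of the eigenvalue equation is a second--order ODE with \emph{constant} potential $pc_p^{p-1}$, which is of hypergeometric type and thus explicitly solvable, so the eigenvalue condition reduces to a transcendental relation among the hypergeometric parameters whose only root with $\Re\lambda\ge-\omega$ and $\lambda\ne1$ has to be ruled out by a connection--coefficient computation. Given this, the Gearhart--Prüss theorem on the Hilbert space $\mathfrak H$ yields a rank--one spectral projection $P$ onto $\langle\mb g\rangle$ with $\mc S(\tau)P=e^{\tau}P$ and
\begin{align*}
\|\mc S(\tau)(\id-P)\|_{\mathfrak H\to\mathfrak H}\lesssim e^{-\omega\tau},\qquad \tau\ge0.
\end{align*}

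For the nonlinear construction, fix a candidate blowup time $T$ near $T_0$. Rescaling condition \eqref{Eq:CondData} at $\tau=0$ gives $\|\Phi(0)\|_{\mathfrak H}\lesssim\delta/M+|T-T_0|$, where the $|T-T_0|$--part lies essentially along $\mb g$. Because of the unstable direction one cannot use \eqref{Eq:IntegralIntro} directly; instead one solves the modified (Lyapunov--Perron) equation
\begin{align*}
\Phi(\tau)=\mc S(\tau)\Phi(0)-e^{\tau}P\Big[\Phi(0)+\textstyle\int_0^\infty e^{-\tau'}\mb N(\Phi(\tau'))\,d\tau'\Big]+\int_0^\tau\mc S(\tau-\tau')\mb N(\Phi(\tau'))\,d\tau'
\end{align*}
by the contraction mapping principle in $\{\Phi\in C([0,\infty),\mathfrak H):\sup_{\tau\ge0}e^{\omega\tau}\|\Phi(\tau)\|_{\mathfrak H}\le\delta\}$, which works once the \emph{stable} part of $\Phi(0)$ is $\lesssim\delta/M$ with $M$ large. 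The resulting correction term $\mb C(u_0,u_1,T)\in\rg P$ depends continuously on $T$; since $\rg P$ is one--dimensional and $T\mapsto P\Phi(0)$ crosses the origin transversally at $T=T_0$ (this being precisely the gauge direction), an intermediate--value argument produces a $T\in[T_0-\delta,T_0+\delta]$ with $\mb C(u_0,u_1,T)=0$. For this $T$ the function $\Phi$ solves \eqref{Eq:IntegralIntro}, exists for all $\tau\ge0$, and satisfies $\|\Phi(\tau)\|_{\mathfrak H}\lesssim e^{-\omega\tau}$; verifying that this $T$ equals $T_{(u_0,u_1)}$ establishes (i).

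Finally, (ii) follows by undoing the change of variables. Using $(T-t)\simeq Te^{-\tau}$, the rescaling identities from \eqref{Def:psi} give, for $1\le j\le\tfrac{d+1}{2}$,
\begin{align*}
(T-t)^{-s_p+j}\|u[t]\|_{\dot H^j\times\dot H^{j-1}(\B_{T-t})}=\|\Psi^T(\tau)\|_{\dot H^j\times\dot H^{j-1}(\B)},
\end{align*}
and since $u_T$ is spatially constant the right--hand side equals $\|\Phi(\tau)\|_{\dot H^j\times\dot H^{j-1}(\B)}\lesssim\|\Phi(\tau)\|_{\mathfrak H}\lesssim e^{-\omega\tau}\lesssim(T-t)^{\mu_p}$ (here $j\le\tfrac{d+1}{2}$ and $j-1\le\tfrac{d-1}{2}$); this yields \eqref{Eq:DecayRates_2} for $j\ge2$ and the $\dot H^1\times L^2$ bound in \eqref{Eq:DecayRates_1}, and the $L^\infty$ and $L^2$ bounds follow in the same way from $H^{\frac{d+1}{2}}(\B)\hookrightarrow L^\infty(\B)\cap L^2(\B)$, the contribution of $u_T$ itself cancelling in the difference $u-u_T$. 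The main obstacle is the spectral step above: obtaining, on the nonstandard space $\mathfrak H$, semigroup bounds for $\mc S_0$ with a genuinely negative exponent off a finite--dimensional subspace, and then excluding — through the explicit hypergeometric solutions and a careful connection--coefficient analysis — any eigenvalue of $\mc L$ in the strip $-\omega<\Re\lambda\le0$ or on the line $\Re\lambda=1$ beyond the gauge mode; this is where the dependence on $d$ and $p$ enters most delicately and where most of the technical work resides.
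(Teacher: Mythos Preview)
Your proposal is essentially correct and follows the same architecture as the paper: similarity coordinates, linearization around $\Psi^*$, compactness of $\mc L'$, hypergeometric analysis of the eigenvalue equation to isolate the gauge mode at $\lambda=1$, a Lyapunov--Perron modified fixed point, a one--dimensional shooting in $T$, and scaling back for the decay estimates.

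There is one methodological difference worth flagging. You invoke the Gearhart--Pr\"uss theorem to pass from the spectral picture to the decay bound $\|\mc S(\tau)(\id-P)\|\lesssim e^{-\omega\tau}$. That route is valid (it is what the authors did in their earlier three--dimensional papers), but it requires a \emph{uniform} resolvent bound $\|\mb R_{\mc L}(\lambda)(\id-P)\|\lesssim 1$ along vertical lines $\Re\lambda=-\omega$, which does not follow from the mere location of $\sigma(\mc L)$ and which you do not supply. The present paper explicitly abandons Gearhart--Pr\"uss in favor of a simpler argument: since $(\id-P)\mc S(\tau)=\mc S_0(\tau)+\text{compact}$ (via Duhamel and the finite rank of $P$), any spectral radius of $(\id-P)\mc S(\tau)$ exceeding $e^{-\frac{2}{p-1}\tau}$ must come from an eigenvalue, and the spectral mapping theorem for the point spectrum then forces an eigenvalue of $\mc L$ in the forbidden region. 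This bypasses resolvent estimates entirely and is one of the paper's stated simplifications over the earlier works. Your approach buys nothing extra here and costs the resolvent analysis; the paper's approach is strictly cleaner.

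A minor point: you gloss over how the free decay $\|\mc S_0(\tau)\|\lesssim e^{-\frac{2}{p-1}\tau}$ is obtained on $\mathfrak H$. This is not automatic (inhomogeneous norms on lightcones do not scale well), and the paper devotes substantial effort to an equivalent norm built from the operator $\D u(\rho)=(\rho^{-1}\partial_\rho)^{\frac{d-3}{2}}(\rho^{d-2}u)$, which intertwines the $d$--dimensional radial wave operator with the one--dimensional one and includes a carefully chosen boundary term at $\rho=1$. You reference the relevant section, so this is not a gap in your outline, but be aware it is where the dimension--dependent work actually sits, not in the hypergeometric step.
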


\begin{remark}
The normalizing factors in Eq.~\eqref{Eq:DecayRates_1} and Eq.~\eqref{Eq:DecayRates_2} appear naturally and reflect the behavior of $u_T$ in the respective norms. Since the ODE blowup solution has a trivial spatial profile, it vanishes identically in higher order homogeneous Sobolev norms, which yields Eq.~\eqref{Eq:DecayRates_2}. The $\varepsilon$--loss in the convergence rates is due to the application of abstract arguments from semigroup theory. 
\end{remark}

\begin{remark}
Our approach is perturbative, i.e., we construct solutions of the form $u = u_T + \varphi$.  This and the embedding $H^{\frac{d+1}{2}} \hookrightarrow L^{\infty}$ guarantee that the nonlinearity is smooth for all $p > 1$ provided that the perturbation is small enough. In particular, Theorem \ref{Th:Main} can be extended to all $p >1$ without modifications. We are therefore able to construct solutions of Eq.~\eqref{Eq:NLW} (at least in a backward lightcone) for nonlinearities that are not covered by the standard local well-posedness theory.
\end{remark}

If we restrict ourselves to Sobolev spaces of integer order, then the regularity required in Theorem \ref{Th:Main} is optimal for $p > 5$ by local well-posedness ($\lceil s_p \rceil = \frac{d+1}{2} $). However, for $p \leq 5$ this might be improved and we show this explicitly for $p=3$ in Theorem \ref{Th:Main2} below. In this case, ${\lceil s_3 \rceil} =\frac{d -1}{2}$ and $H^{\lceil s_3 \rceil}$--solutions can be defined in a similar manner as above.

\begin{theorem}[Improvement of the toplogy]\label{Th:Main2}
Let $p=3$. Then the first statement of Theorem \ref{Th:Main} holds (mutatis mutandis) for radial initial data satisfying Eq.~\eqref{Eq:CondData} in $H^{\lceil s_p \rceil} \times H^{\lceil s_p \rceil -1}$, where $s_p = \frac{d}{2} - 1$. The corresponding solution $u: \Gamma_T(\R^d) \to \R$ satisfies the estimates
\begin{align*}
\begin{split}
(T- t)^{-s_p} \|u(t,\cdot) - u_T(t,\cdot) \|_{L^2(\B_{T-t}) }  \lesssim (T-t)^{\frac12-\varepsilon}, \\
(T- t)^{-s_p + j}  \|u[t] - u_T[t] \|_{\dot H^j \times \dot H^{j-1}(\B_{T-t}) }  \lesssim (T-t)^{\frac12-\varepsilon}, \\
\end{split}
\end{align*}
for $j = 1, \dots, \lceil s_p \rceil$.
\end{theorem}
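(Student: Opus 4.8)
\emph{Proof sketch.} The plan is to repeat the construction behind Theorem~\ref{Th:Main} verbatim, but carried out in the rougher Hilbert space $\mathfrak H_\ast := H_{\mathrm{rad}}^{\lceil s_p\rceil}\times H_{\mathrm{rad}}^{\lceil s_p\rceil-1}(\B)$, which for $p=3$ equals $H_{\mathrm{rad}}^{(d-1)/2}\times H_{\mathrm{rad}}^{(d-3)/2}(\B)$. Three ingredients have to be re-established in this topology: (a) that $\mc L_0$ is closable on $\mathfrak H_\ast$ and its closure generates a strongly continuous semigroup $\mc S_0^\ast$ (the analogue of Proposition~\ref{Prop1}); (b) the spectral analysis of the linearization $\mc L$ of the right-hand side of~\eqref{Eq:CssSystem} around $u_T$; and (c) local Lipschitz bounds on $\mathfrak H_\ast$ for the nonlinearity $\mb F$.

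For (a) one argues exactly as in Section~\ref{Sec:LinWellPosed}: the explicit representation of the free self-similar evolution and the associated accretivity estimates go through with $\tfrac{d+1}{2}$ replaced by $\tfrac{d-1}{2}$. Since $\mc L$ differs from $\mc L_0$ only by the bounded operator $(\phi_1,\phi_2)\mapsto(0,\,p\,c_p^{p-1}\phi_1)$, the perturbed semigroup $\mc S^\ast$ on $\mathfrak H_\ast$ is obtained by bounded perturbation. For (b), the eigenvalue equation for the generator is an ordinary differential equation that does not depend on the ambient function space, so the mode-stability statement --- that the only spectral point with $\Re\lambda\ge 0$ is the symmetry eigenvalue $\lambda=1$ arising from the time-translation family $\{u_T\}_{T>0}$, whose (smooth) eigenfunction lies in $\mathfrak H_\ast$ as well --- is inherited without change; one then re-derives the quantitative bound $\|\mc S^\ast(\tau)(1-\mc P^\ast)\|_{\mathfrak H_\ast}\lesssim e^{-\omega\tau}$ for the rank-one spectral projection $\mc P^\ast$ onto the $\lambda=1$ mode from the same resolvent estimate. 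The one genuine difference is the admissible value of $\omega$: in this topology it is limited by the distance $\lceil s_p\rceil-s_p=\tfrac12$ of $\mathfrak H_\ast$ to the scaling-critical space $\dot H^{s_p}\times\dot H^{s_p-1}$, and since $\tfrac{2}{p-1}=1>\tfrac12$ for $p=3$ this $\tfrac12$ is the binding constraint, which after the unavoidable $\varepsilon$-loss of semigroup theory produces the rate $(T-t)^{\frac12-\varepsilon}$.

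For (c), writing $u=u_T+\varphi$ with $p=3$, the genuinely nonlinear remainder of $\mb F$ in self-similar variables is the map $\phi_1\mapsto(0,\;3c_3\phi_1^2+\phi_1^3)$, so it suffices to show that $\phi_1\mapsto\phi_1^2$ and $\phi_1\mapsto\phi_1^3$ are locally Lipschitz from $H_{\mathrm{rad}}^{(d-1)/2}(\B)$ into $H_{\mathrm{rad}}^{(d-3)/2}(\B)$. The quadratic bound $\|fg\|_{H^{(d-3)/2}}\lesssim\|f\|_{H^{(d-1)/2}}\|g\|_{H^{(d-1)/2}}$ is comfortably inside the standard Sobolev product estimate, the relevant index surplus being $\tfrac{d-1}{2}+\tfrac{d-1}{2}-\tfrac{d-3}{2}=\tfrac{d+1}{2}>\tfrac d2$. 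The cubic bound is the borderline case; it follows from a fractional Leibniz (Kato--Ponce) estimate that puts all $\tfrac{d-3}{2}$ derivatives on a single factor, combined with the \emph{endpoint} Sobolev embeddings $H^{(d-1)/2}(\B)\hookrightarrow L^{2d}(\B)$ and $H^{1}(\B)\hookrightarrow L^{2d/(d-2)}(\B)$, whose Hölder exponents balance exactly: $\tfrac{d-2}{2d}+\tfrac1{2d}+\tfrac1{2d}=\tfrac12$. The Lipschitz versions follow from $a^2-b^2=(a-b)(a+b)$ and $a^3-b^3=(a-b)(a^2+ab+b^2)$ together with the same product estimates.

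With (a)--(c) in place, the Lyapunov--Perron/Banach fixed point argument of Theorem~\ref{Th:Main} runs in $\mathfrak H_\ast$ without any further change: one solves the Duhamel equation~\eqref{Eq:IntegralIntro} for the perturbation in an $e^{\omega\tau}$-weighted space of $\mathfrak H_\ast$-valued curves, removes the one-dimensional unstable direction by the usual codimension-one Lipschitz condition on the data, and fixes the blowup time $T$ by a Brouwer-type argument annihilating the $\lambda=1$ component; this already yields the blowup-time statement (part~i) of Theorem~\ref{Th:Main}), and transferring the resulting $\mathfrak H_\ast$-decay back to physical variables via~\eqref{Def:psi} and $T-t=Te^{-\tau}$ gives the stated estimates. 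The main obstacle is precisely the sharp cubic estimate: since $H^{(d-1)/2}(\B)$ lies only half a derivative above the non-algebra space $H^{s_p}(\B)$, every Sobolev exponent entering that estimate is used at its endpoint, and one must check that the fractional Leibniz rule is still available there and behaves well on the bounded domain $\B$ rather than on $\R^d$.
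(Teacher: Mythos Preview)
Your outline is correct and matches the paper's architecture: work in $\mathfrak H_\ast=H_{\mathrm{rad}}^{(d-1)/2}\times H_{\mathrm{rad}}^{(d-3)/2}(\B)$, redo the semigroup and spectral analysis there, establish Lipschitz bounds for the cubic nonlinearity, and rerun the fixed-point plus Brouwer argument unchanged. Two implementation points are handled more simply in the paper than in your sketch. First, since $d$ is odd, both $(d-1)/2$ and $(d-3)/2$ are \emph{integers}, so no fractional Leibniz/Kato--Ponce machinery is needed for (c): the paper just expands $\partial^\alpha(u^3)$ and $\partial^\alpha(u^2)$ by the ordinary Leibniz rule and closes each term with H\"older and the endpoint embeddings $H^{(d-1)/2}\hookrightarrow L^{2d}$, $W^{|\alpha|,2d/(1+2|\alpha|)}\hookleftarrow H^{(d-1)/2}$ on the ball --- exactly the exponents you wrote down, but without any endpoint-paraproduct worry on a bounded domain. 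Second, for the decay of $(1-\mc P^\ast)\mc S^\ast(\tau)$ the paper does \emph{not} use a resolvent estimate/Gearhart--Pr\"uss argument; it exploits compactness of the perturbation $\mb L'$ directly: Duhamel shows $(1-\mc P^\ast)\mc S^\ast(\tau)-\mc S_0^\ast(\tau)$ is compact, hence any spectral radius beyond $e^{(\frac12-\frac{2}{p-1})\tau}$ would have to be point spectrum of the generator, which is excluded by the ODE analysis. This yields the $e^{-(\frac12-\varepsilon)\tau}$ bound without resolvent bounds and is one of the advertised simplifications of the paper.
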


We note that there are possibly other situations where the topology can be optimized and it will become clear in Section \ref{Sec:Outline} how this could be realized within our framework. However, we do not pursue this here.

\subsection{Related results}

Blowup for the wave equation with (sub)conformal focusing nonlinearities  $1 < p \leq \frac{d+3}{d-1}$, $d \geq 2$, was considered in the seminal work of Merle and Zaag \cite{MerZaa03}, \cite{MerZaa05}, cf. also Antonini and Merle \cite{AntMer01}. They were able to prove that \textit{all} blowup solutions diverge with the self-similar rate in the backward lightcone of the blowup point (at the energy level). They also extended their previous analysis for the one-dimensional wave equation to $d$ dimensions in the subconformal case to characterize the behavior of radial solutions provided that the blowup occurs outside the origin \cite{MerZaa11}. In \cite{MerZaa13a}, \cite{MerZaa13b} they studied the dynamics around $u_T$ (without symmetry assumptions) and investigated properties of the blowup surface.

In the superconformal regime, much less is known concerning the behavior of generic solutions. However, for energy subcritical nonlinearities, Killip, Stovall and Visan \cite{KilStoVis12} as well as Hamza and Zaag \cite{HamZag13} were able to derive upper bounds for the blowup rate.

In the energy critical case, $u_T$ is the unique self-similar solution and it can be used to construct blowup solutions that diverge in the scale invariant norm $\dot H^1 \times L^2(\R^3)$, cf.~ for example \cite{KriWong13}. This behavior is referred to as \textit{type I} and contrasted by \textit{type II} blowup, where solutions stay bounded in the critical norm. First examples of type II solutions were obtained by Krieger, Schlag and Tataru \cite{KriSchTat09}, \cite{KriSch12} for the radial equation in three dimensions using the (up to scaling) unique solution of the corresponding elliptic problem. We also refer to \cite{DonKrieger13} for solutions that blow up in infinite time.  A detailed description of all possible type II blowup dynamics was provided in the celebrated work of Duyckaerts, Kenig and Merle, cf.~\cite{DuyKenMer12c} as well as \cite{DuyKenMer14} for further references. For $d=4$, smooth type II blowup solutions were constructed by Hillairet and Rapha\"el \cite{HillRaph12}, and we also refer to \cite{Jen2015} for a recent result by Jendrej in five dimensions.

To the best knowledge of the authors, all currently available results for supercritical nonlinearities $p > \frac{d+2}{d-2}$ are either conditional or consider perturbations around certain special solutions. Type II blowup behavior for radial solutions was excluded by Duyckaerts, Kenig and Merle \cite{DuyKenMerle12} for $d=3$ and by Dodson and Lawrie \cite{DodsonLawrie14} for $d=5$. We also refer to similar results for the defocusing case obtained  in \cite{KenMer11}, \cite{KilVis11} or \cite{Bul11}. These works show that if the critical norm stays bounded up to the maximal time of existence, then the solution is global in time. We also mention a recent work by Krieger and Schlag \cite{KriSch14}, where smooth global solutions are constructed that have infinite critical norm (but are bounded in all higher norms). Recently, based on the pioneering work of Merle, Rapha\"{e}l and Rodnianski \cite{MerRaphRod2014} for the energy supercritical nonlinear Schr\"odinger equation, a new blowup mechanism was described by Collot \cite{Collot2014} for the radial wave equation in $d \geq 11$ and $p >  1 +  \frac{4}{d-4-2 \sqrt{d-1}}$. There, solutions blow up via concentration of the soliton profile, which is somewhat reminiscent of the type II behavior in the energy critical case. However, the solutions diverge in the critical norm and this blowup mechanism could therefore be referred to as type IIb in order to avoid confusion.

\subsection{Strategy of the proof}\label{Sec:Outline}

 We consider the radial equation 
\begin{align}\label{Eq:RadialWave}
\partial^2_t u  -   \tfrac{1}{r^{d-1}}\partial_r ( r^{d-1}  \partial_r u) = |u|^{p-1} u,
\end{align}
for $u = u(t,r)$ and initial data $(u_0,u_1) = u_{T_0}[0] + (f,g)$, where $(f,g)$ are free radial functions. We study the initial value problem in a backward lightcone  $\{(t,r):  t \in [0,T) , r \in [0,T-t]\}$, where $T > 0$ is a parameter that will be fixed in the final step of the proof. We introduce rescaled variables and rewrite Eq.~\eqref{Eq:RadialWave} as a first order system in (radial) similarity coordinates 
\[\tau := -\log(T-t) + \log T, \quad \rho := \tfrac{r}{T-t},\]
which yields the abstract evolution problem 
\begin{align*}
\partial_{\tau} \Psi = \mb L_0 \Psi + \mb F(\Psi),
\end{align*}
where $\Psi = (\psi_1,\psi_2)$. Here, $\mb L_0$ represents the radial wave operator and $\mb F(\Psi) = (0, |\psi_1|^{p-1}\psi_1)$. The backward lightcone now corresponds to \[\{ (\tau, \rho): \tau \in [0,\infty), \rho \in [0,1] \}.\] 
Note that the parameter $T$ does not appear in the equation itself, but it shows up in the initial data. In this formulation, the ODE blowup solution corresponds to the static solution $\mb c_p$. The ansatz $\Psi = \mb c_p + \Phi$ yields
 \begin{align}\label{Eq:ODEPert}
\partial_{\tau} \Phi = \mb L \Phi + \mb N(\Phi),
\end{align}
with $\mb L := \mb L_0 + \mb L' $ representing the linearized part of the equation and $ \mb N$ denoting the nonlinear remainder.
Eq.~\eqref{Eq:ODEPert} is investigated as an abstract ODE on a Hilbert space with norm
\begin{align*}
\| \mb u \|^2 := \| u_1(|\cdot|) \|^2_{H^k(\B)} +  \| u_2(|\cdot|) \|^2_{H^{k-1}(\B)},
\end{align*}
for $\mb u = (u_1,u_2)$. For Theorem \ref{Th:Main}, we choose $k= (d+1)/2$, whereas for the proof of Theorem \ref{Th:Main2},
$k= (d-1)/2$ is sufficient. The choice of the topology is motivated as follows.

First, we have to derive suitable Lipschitz estimates for $\mb N$. In the first situation (Theorem \ref{Th:Main}), we can exploit the Sobolev embedding $H^{k} \hookrightarrow L^{\infty}$ for $k > \frac{d}{2}$ to infer that the nonlinear remainder is smooth for  all $p > 1$ (given that the perturbations are sufficiently small). An application of Moser's inequality then yields the desired result. For $p=3$ (Theorem \ref{Th:Main2}), the nonlinearity is analytic, hence regularity is not an issue and the Lipschitz estimates can be obtained by using H\"older's inequality and Sobolev embedding.

Furthermore, we need a decay estimate for the time evolution of the \textit{linear} wave equation in similarity coordinates. To see what can be expected, let us drop the symmetry assumption for a moment and let $u(t,x)$, $x \in \R^d$, be a generic solution of the free wave equation
\[ \partial^2_{t} u - \Delta u = 0.\]
Let $\psi_1$ denote the rescaled solution in similarity coordinates as in Eq.~\eqref{Def:psi}. The scaling behavior of  Sobolev norms implies that 
\begin{align*}
\| \psi_1(\tau, \cdot ) \|_{\dot H^k(\B)}  = (T-t)^{\frac{2}{p-1} - \frac{d}{2} + k} \|u(t,\cdot)\|_{\dot H^k(\B_{T-t})},
\end{align*}
where we write $\tau =- \log(T-t)   + \log T$ for brevity.
One can easily check that $\|u(t,\cdot)\|_{\dot H^k(\B_{T-t})}$ is bounded. However, without further assumptions on the regularity this cannot be improved since one can construct explicit solutions that decay arbitrarily slow in $\dot H^k(\B_{T-t})$. Hence, a decay estimate for $\psi_1$ in $\dot H^k(\B)$ can only be obtained if $\frac{2}{p-1} - \frac{d}{2} + k > 0$. Unfortunately, in backward lightcones such homogeneous quantities are only seminorms and we have to work instead with
\[  \| \psi_1( \tau , \cdot ) \|_{H^{k}(\B)}  = \sum_{j=0}^{k} (T-t)^{\frac{2}{p-1} - \frac{d}{2} + j}  \|u(t,\cdot)\|_{\dot H^j(\B_{T-t})}.\]
At first glance, the lower order terms seem to spoil the decay estimate. However, this can be overcome by considering equivalent norms. In the radial case, the construction is based on the reduction of the $d-$dimensional radial wave equation to the one--dimensional case (or simply to a lower dimensional equation, depending on the required level of regularity). For our purpose we set $Du(t,r) := (\frac{1}{r} \partial_r)^{\frac{d-3}{2}} (r^{d-2} u(t,r))$ and observe that
\begin{align*}
\sum_{j=0}^{(d-1)/2}  (T-t)^{\frac{2}{p-1} - \frac{d}{2} + j} &  \|u(t,|\cdot|)\|_{\dot H^j(\B_{T-t})} \\
&   \simeq (T-t)^{\frac{2}{p-1} - \frac12} \|D u(t,\cdot)\|_{\dot H^1(0,T-t)}.
\end{align*}
Since $Du$ solves the one-dimensional equation, $\|D u(t,\cdot)\|_{\dot H^1(0,T-t)}$ bounded. This equivalence is crucial in the proof of Theorem \ref{Th:Main2}. It is also obvious that we have decay only if $p<5$. For Theorem \ref{Th:Main}, this is not sufficient (also because the above quantity only provides $\frac{d-1}{2}$ derivatives). We could work instead with $\| Du(t,\cdot)\|_{\dot H^2(0,T-t)}$, but this is only a seminorm (the radial derivative of $Du$ does not vanish at the origin). Adding the energy part solves this problem, but spoils again the decay estimate for $p \geq 5$. Hence, we consider
\begin{align*}
\big \| (\psi_1(\tau,\cdot),\psi_2(\tau, \cdot))\|^2_{D} & := 
 (T-t)^{\frac{4}{p-1} +1} \|D u(t,\cdot) \|^2_{\dot H^2(0,T-t)}  \\
 &  +   (T-t)^{\frac{4}{p-1} +1} \| Du_t(t, \cdot)  \|^2_{\dot H^1(0,T-t)}  \\
  & +  (T-t)^{\frac{4}{p-1}} \big | (D u)_r(t,T-t) + (D u)_t(t,T-t) \big|^2
\end{align*}
and show that  $ \| \cdot \|_{H^{\frac{d+1}{2} }\times H^{\frac{d-1}{2}}(\B)}  \simeq  \| \cdot \|_{D} $.  Observe that solutions of the one--dimensional equation $w_{tt} - w_{rr} = 0$ satisfy
\begin{align*}
\tfrac{d}{dt} |w_r(t,T-t) + w_t(t,T-t)|^2 = 0.
\end{align*}
In view of this, the desired decay follows for all $p > 1$. We note that for $d=3$ such an equivalent norm also exists in the non-radial context, cf.~\cite{DonSch14b}. However, it is not clear how this can be generalized to arbitrary space dimensions.

To prove the results of Section \ref{Sec:MainResult} we proceed as in \cite{DonSch12}, \cite{DonSch13} and use the theory of strongly continuous one--parameter semigroups to address the linearized equation. Since the operator $\mb L$ is a highly non-selfadjoint object, semigroup theory can deploy its full strength and enables us to treat the problem on a very abstract level. 
\begin{itemize}
\item With the above considerations and suitable equivalent norms it is easy to show that $\mb L_0$ is the generator of a semigroup which satisfies a suitable decay estimate. Since $\mb L'$ is bounded, well-posedness of the linearized problem and the existence of a strongly continuous semigroup $(\mb S(\tau))_{\tau \geq 0}$ generated by $\mb L$ follow immediately. 

\item To deduce suitable growth estimates for the semigroup we analyze the spectrum of the generator. Compactness of the perturbation reduces matters to the investigation of the eigenvalue problem, which can be solved explicitly in terms of hypergeometric functions. We show that the spectrum of $\mb L$ is contained in a left half plane except for the point $\lambda=1$, which is an eigenvalue with eigenfunction $\mb g$. The existence of this unstable eigenvalue is a consequence of the time translation symmetry of the problem and we define a spectral projection $\mb P$ to analyze the behavior of solutions on the stable subspace. Note that we have to verify that $\mathrm{rg} \mb P = \langle \mb g \rangle$, since we are dealing with a non-selfadjoint problem. In contrast to \cite{DonSch12}, \cite{DonSch13}, where we used resolvent estimates and the Gearhardt-Pr\"uss Theorem to deduce growth bounds for $(1 - \mb P) \mb S(\tau)$, we employ a much simpler argument here and exploit the compactness of the perturbation directly. This is a substantial simplification relying only on standard results from semigroup theory. As a result we obtain that 
\begin{align*}
\|(1 - \mb P) \mb S(\tau) \| \lesssim e^{-\mu_p \tau}, \quad \mb P  \mb S(\tau) \mb u = e^{\tau} \mb u, \quad \text{where } \mu_p > 0.
\end{align*}

\item  We rewrite Eq.~\eqref{Eq:ODEPert} in Duhamel form,
\begin{align}\label{Eq:IntegralEq_Introduc}
\Phi(\tau) = \mb S(\tau) \mb U(\mb v, T)  + \int_0^{\tau} \mb S(\tau - \tau') \mb N(\Phi(\tau'))d\tau',
\end{align}
where $\mb U(\mb v,T)$ with $\mb v = (f,g)$ gives the original initial data. The main ingredients for the nonlinear theory are the above estimates for the semigroup and Lipschitz estimates for the nonlinearity of the form
\[ \|  \mb N(\mb u) - \mb N(\mb v) \| \lesssim  ( \| \mb u \| + \| \mb v \|) \| \mb u - \mb v\|. \]
The rest of the proof is purely abstract.
\item  We add a correction term to Eq.~\eqref{Eq:IntegralEq_Introduc} in order to suppress the unstable behavior of $\mb S(\tau)$. An application of the Banach fixed point theorem shows the existence of a unique solution to the modified equation 
\begin{align*}
\Phi(\tau) = \mb S(\tau) &  \mb U(\mb v, T) + \int_0^{\tau} \mb S(\tau - \tau') \mb N(\Phi(\tau'))d\tau'   \\
& - e^{\tau} \mb C(\Phi, \mb U(\mb v, T)),
\end{align*}
given that $\mb v$ is small and $T$ is close to $T_0$. Furthermore, the solution decays to zero with the linear decay rate. 

\item In the final step, we show that for every small $\mb v$ there exists a $T_{\mb v}$ close to $T_0$ such that $\mb C(\Phi, \mb U(\mb v, T_{\mb v}))= 0$. We exploit the fact that  $ \mb C(\Phi, \mb u) \in \langle \mb g \rangle$ and apply the Brouwer fixed point theorem as in \cite{DonSch14b}. This is a substantial simplification compared to \cite{DonSch12}, \cite{DonSch13}, where differentiability of several quantities was required. Transforming back to original coordinates yields the result.
\end{itemize}

\section{Notation}	
Throughout the paper we assume that $d = 2k +1$, $k \in \N$, $k \geq 2$ is fixed. We write $\N$ for the natural numbers $\{1,2,3, \dots\}$ and  set $\N_0 := \{0\} \cup \N$. As mentioned above, $\B_R$ denotes the open ball in $\R^d$ centered at zero with radius $R > 0$. If $R=1$, we simply write $\B$. The notation $a\lesssim b$ means $a\leq Cb$ for an absolute constant $C>0$ and we also write $a\simeq b$ if $a\lesssim b$ and $b \lesssim a$.

For a function $x \mapsto g(x)$, we use the notation $g^{(n)}(x) = \frac{d^n g(x)}{dx^n} $ for derivatives of order $n \in \N$. For $n=1,2$ we also write $g'(x)$ and $g''(x)$, respectively. For a function $(x,y) \mapsto f(x,y)$ partial derivatives of order $n$ will be denoted by $\partial^{n}_x f (x,y) = \frac{\partial^n}{\partial x^n} f(x,y) = \partial^{n}_1 f (x,y) $.  For $\Omega \subset \R^d$ a domain, $H^{m}(\Omega)$, $m \in \N_0$, denotes the standard Sobolev space with norm
\[ \|  u \|^2_{H^m(\Omega)} := \sum_{\alpha: |\alpha| \leq m} \|\partial^{\alpha}  u \|^2_{L^2(\Omega)},\]
where $\alpha = ( \alpha_1, \dots, \alpha_d) \in \N_0^{d}$ is a multi-index, i.e., $\partial^{\alpha} = \partial^{\alpha_1}_{x_1} \dots  \partial^{\alpha_d}_{x_d} $
and $|\alpha|=\alpha_1+ \dots + \alpha_d$.

The set of bounded linear operators on a Hilbert space $\mc H$ is denoted by $\mc B(\mc H)$. For a closed linear operator $\mb L$ we write $\sigma(\mb L)$ and $\sigma_p(\mb L)$ for the spectrum and point spectrum, respectively.
Furthermore, we set $\mb R_{\mb L}(\lambda):=(\lambda-\mb L)^{-1}$ for $\lambda \notin \sigma(\mb L)$.

\section{The radial wave equation in similarity coordinates}

We restrict ourselves to radial solutions of Eq.~\eqref{Eq:NLW} and write $u(t,x) = u(t,r)$, where $r = |x|$, by slight abuse of notation. We introduce the radial Laplace operator on $\R^d$,
\begin{align*}
\Delta_r u(t,r) := \partial^2_r u(t,r) + \tfrac{d-1}{r} \partial_r u(t,r)
\end{align*}	 
and study the equation 
\begin{align}\label{Eq:Radial_NLW}
\partial^2_t u(t,r) -  \Delta_r u(t,r) = |u(t,r)|^{p-1} u(t,r).
\end{align}
The initial data at $t=0$ are assumed to be of the form
\begin{align}\label{Eq:Radial_NLW_Data}
u[0] = u_{T_0}[0] + (f,g),
\end{align}
where $T_0 > 0$ is fixed and $(f,g)$ can be chosen freely. At the origin we impose the natural boundary condition $\partial_r u(t,0) = 0$ for all $t > 0$. 
We define rescaled variables
	\begin{align*}
			U_1(t,r) := (T-t)^{\frac{2}{p-1}} u(t,r), \quad  U_2(t,r) := (T-t)^{1 + \frac{2}{p-1}} \partial_t u(t,r),
	\end{align*}
where $T > 0$. This yields the first order system
\begin{align*}
 \left ( \! \! \begin{array}{c} \partial_t U_1  \\ 						
			\partial_t U_2  \end{array} \! \! \right) =\left ( \! \!  \begin{array}{c} (T-t)^{-1}  U_2  - \frac{2}{p-1} (T-t)^{-1}   U_1 \\ 						
			(T-t) \Delta_r U_1  - (T-t)^{-1} \big (\frac{p+1}{p-1} U_2 - |U_1|^{p-1} U_1 \big )  \end{array} \! \!  \right)   
\end{align*}
with data 
\[ U_1(0,r) =(\tfrac{T}{T_0})^{\frac{2}{p-1}}  c_p + T^{\frac{2}{p-1}} f(r), \quad  U_2(0,r) = (\tfrac{T}{T_0})^{\frac{p+1}{p-1}} \tfrac{2}{p-1} c_p +  T^{\frac{p+1}{p-1}} g(r). \]
and boundary conditions
\[ \partial_r U_1(t,0) = \partial_r  U_2(t,0) = 0,\]
for all $t >0$.
In the rescaled variables the blow-up solution $u_T$ is static and corresponds to $\mb c_p := ( c_p ,\frac{2}{p-1} c_p )$. We introduce similarity variables
	 \[ \rho = \frac{r}{T-t}, \quad \tau = - \log (T-t) + \log T.\]
Derivatives transform according to
\[ \partial_t = \frac{e^{\tau}}{T} (\partial_{\tau} + \rho \partial_{\rho}), \quad \partial_{r} = \frac{e^{\tau}}{T} \partial_{\rho}.\] 
Setting  $\psi_j(\tau,\rho) := U_j(T-T e^{-\tau},T e^{-\tau} \rho)$ for $j=1,2$, we obtain the system
\begin{align}\label{Eq:First_order_css_nonl}
\left ( \! \!\begin{array}{c} \partial_{\tau} \psi_1  \\ 						
			\partial_{\tau} \psi_2  \end{array}  \! \! \right  ) =\left ( \! \!  \begin{array}{c}  \psi_2 - \rho \partial_{\rho} \psi_1 - \frac{2}{p-1} \psi_1 \\ 						
			\Delta_{\rho} \psi_1   - \rho \partial_{\rho} \psi_2 - \frac{p+1}{p-1}  \psi_2  + |\psi_1|^{p-1} \psi_1   \end{array} \! \!  \right)   
\end{align}
with the boundary conditions $\partial_{\rho} \psi_j(\tau,0) = 0$ and initial data 
\begin{align}\label{Eq:Data}
\begin{split}
\psi_1(0,\rho) & = (\tfrac{T}{T_0})^{\frac{2}{p-1}}  c_p + T^{\frac{2}{p-1}} f(T\rho ), \\
\psi_2(0,\rho)  & =  (\tfrac{T}{T_0})^{\frac{p+1}{p-1}} \tfrac{2}{p-1} c_p +  T^{\frac{p+1}{p-1}} g(T \rho).
\end{split}
\end{align}

We restrict the problem to the backward lightcone of $(T,0)$, i.e.,  we study Eq.~\eqref{Eq:First_order_css_nonl} for $\rho \in [0,1]$ and $\tau > 0$.

\subsection{Perturbations around the ODE blow up solution}
Inserting the ansatz
\[ \left ( \! \! \begin{array}{c} \psi_1  \\  \psi_2  \end{array}  \! \! \right) =  \left ( \! \!  \begin{array}{c}    \varphi_1 \\  \varphi_2 \end{array} \! \!   \right) + \mb c_p \]
into Eq.~\eqref{Eq:First_order_css_nonl} we obtain 
\begin{align*}
			 \left ( \! \!  \begin{array}{c} \partial_{\tau} \varphi_1  \\ 						
			\partial_{\tau} \varphi_2  \end{array} \! \!  \right) =\left ( \! \!  \begin{array}{c}  \varphi_2 - \rho \partial_{\rho} \varphi_1- \frac{2}{p-1} \varphi_1  \\ 						
			\Delta_{\rho} \varphi_1   - \rho \partial_{\rho} \varphi_2  - \frac{p+1}{p-1} \varphi_2  +  p c_p^{p-1} \varphi_1 +  N(\varphi_1)   \end{array} \! \!  \right)   
\end{align*}
for $\rho \in [0,1]$ and $\tau > 0$ where
\[ N(\varphi_1)=|c_p+\varphi_1|^{p-1}(c_p+\varphi_1)-c_p^{p}  -p c_p^{p-1} \varphi_1.\]
The initial data are given by
\begin{align*}
\begin{split}
\varphi_1(0,\rho) & = (\tfrac{T}{T_0})^{\frac{2}{p-1}}  c_p + T^{\frac{2}{p-1}} f(T\rho ) - c_p  \\
\varphi_2(0,\rho) & =  (\tfrac{T}{T_0})^{\frac{p+1}{p-1}} \tfrac{2}{p-1} c_p +  T^{\frac{p+1}{p-1}} g(T \rho)- \tfrac{2}{p-1} c_p.
\end{split}
\end{align*}

\section{Proof of Theorem \ref{Th:Main}}

Throughout this section, $p > \frac{d+3}{d-1}$ is a fixed real number.

\subsection{Functional setting}\label{Sec:FuncSet}

 We consider radial functions $\hat u$ defined on $\B_R$, i.e., $\hat u(\xi) = u(|\xi|)$, $\xi \in \B_R$. In order to avoid confusion owing to identification of $\hat u$ and $u$, we define 
\begin{align}\label{Def:SobRad}
\begin{split}
H_{r}^m(\B_R) := &  \{ u: (0,R) \to \C \text{ such that } u \text{ is } m- \text{times}   \\
&   \text{ weakly differentiable and }  \| u(| \cdot|) \|_{H^m(\B_R)} < \infty  \} ,
\end{split}
\end{align}
for $m \in \N_0$. The density of $C^{\infty}(\overline{\B_R})$ in $H^m(\B_R)$ implies the density of
\begin{align*}
C_e^{\infty}[0,R] := \{ u \in C^{\infty}[0,R] : u^{(2k+1)}(0)=0, k \in \N_0 \}
\end{align*}
in $H_{r}^m(\B_R)$. For the rest of the paper we set
\[ \boxed{m_d := \frac{d+1}{2} } \] 
and introduce the Hilbert space
\[\mc H := H_r^{m_d}\times H_r^{m_d-1}(\B)\]
with norm 
\begin{align*}
\| \mb u \|^2 := \| u_1(|\cdot|) \|^2_{H^{m_d}(\B)} +  \| u_2(|\cdot|) \|^2_{H^{m_d-1}(\B)},
\end{align*}
for $\mb u = (u_1,u_2)^T$. 

\subsubsection{Equivalent norms on $\mc H$}

We define a norm on $\mc H$ which is 'tailor-made' for the investigation of the linearized time evolution of the perturbation.
First, we need the following auxiliary result.

\begin{lemma}\label{Le:Aux_Norm}
Let $u \in H_r^{m_d}(\B)$. Then 
\begin{align*}
\| u(|\cdot|) \|^2_{H^{m_d}(\B)}  \simeq \| u \|^2_{L^2(0,1)} + \sum_{n=1}^{m_d} \| (\cdot)^{n-1} u^{(n)} \|^2_{L^2(0,1)}.
\end{align*}
Furthermore, for all $u \in H_r^{m_d-1}(\B)$,
\[\| u(|\cdot|) \|^2_{H^{m_d-1}(\B)}  \simeq  \sum_{n=0}^{m_d-1} \| (\cdot)^{n} u^{(n)} \|^2_{L^2(0,1)}.\]
\end{lemma}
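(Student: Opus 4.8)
The plan is to prove both equivalences by establishing the ``$\lesssim$'' direction from an explicit expansion of $\partial^{\alpha}[u(|\cdot|)]$, and the reverse inequality by induction on the order of differentiation, combining a one--dimensional restriction argument with Hardy inequalities on $(0,1)$. Since $C_{e}^{\infty}[0,1]$ is dense in $H_{r}^{m}(\B)$, it suffices to argue for $u \in C_{e}^{\infty}[0,1]$, for which $\hat u := u(|\cdot|) \in C^{\infty}(\overline{\B})$ and all manipulations below are classical.

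For the upper bounds I would first record that, for a multi--index $\alpha$ with $|\alpha| = n \geq 1$,
\[
\partial^{\alpha}\hat u(\xi) = \sum_{j=1}^{n} a_{j}^{\alpha}(\xi)\,u^{(j)}(|\xi|),
\]
where each $a_{j}^{\alpha}$ is smooth away from $0$ and homogeneous of degree $j-n$, so that $|a_{j}^{\alpha}(\xi)| \lesssim |\xi|^{j-n}$; this follows by an immediate induction on $n$ starting from $\partial_{i}\hat u = u'(|\cdot|)\,\xi_{i}/|\xi|$. Passing to polar coordinates gives $\|\partial^{\alpha}\hat u\|_{L^{2}(\B)}^{2} \lesssim \sum_{j=1}^{n}\int_{0}^{1} r^{2(j-n)+d-1}|u^{(j)}(r)|^{2}\,dr$, and since $|\alpha| = n \leq m_d$ forces $2(j-n)+d-1 \geq 2j-2$, every summand is bounded by $\|(\cdot)^{j-1}u^{(j)}\|_{L^{2}(0,1)}^{2}$; together with $\|\hat u\|_{L^{2}(\B)}^{2} \lesssim \|u\|_{L^{2}(0,1)}^{2}$ (as $r^{d-1}\leq 1$ on $(0,1)$) and summation over $|\alpha|\leq m_d$ this yields the first ``$\lesssim$''. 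The second one is identical with $m_d$ replaced by $m_d-1$: now $|\alpha| \leq m_d-1$ forces $2(j-n)+d-1 \geq 2j$, which matches the weights $r^{2n}$ appearing there.

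For the reverse inequality I would show, by induction on $n = 1,\dots,m_d$, that $\|(\cdot)^{n-1}u^{(n)}\|_{L^{2}(0,1)} \lesssim \|\hat u\|_{H^{m_d}(\B)}$. Restricting the displayed identity to a ray $\{r\omega : r\in(0,1)\}$ gives $(\partial_{1}^{n}\hat u)(r\omega) = \sum_{j=1}^{n} r^{j-n}b_{j}(\omega)\,u^{(j)}(r)$ with $b_{n}(\omega) = \omega_{1}^{n}$, so that on a small spherical cap $V$ around $e_{1}$ one has $|b_{n}(\omega)| \geq \tfrac12$ and hence
\[
\int_{0}^{1} r^{2n-2}|u^{(n)}(r)|^{2}\,dr \lesssim \frac{1}{|V|}\int_{V}\!\int_{0}^{1} r^{2n-2}|(\partial_{1}^{n}\hat u)(r\omega)|^{2}\,dr\,d\sigma(\omega) + \sum_{j=1}^{n-1}\|(\cdot)^{j-1}u^{(j)}\|_{L^{2}(0,1)}^{2},
\]
the last sum being controlled by the induction hypothesis. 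For $n = m_d$ the weight $r^{2n-2} = r^{d-1}$ is exactly the polar weight, so the first term is $\lesssim \|\partial_{1}^{m_d}\hat u\|_{L^{2}(\B)}^{2} \leq \|\hat u\|_{H^{m_d}(\B)}^{2}$. For $n < m_d$ I would iterate the one--dimensional Hardy inequality $\int_{0}^{1} r^{\beta}|g|^{2} \lesssim \int_{0}^{1} r^{\beta+2}|g'|^{2} + |g(1)|^{2}$ (valid for $\beta > -1$) exactly $m_d-n$ times along each ray, raising the weight from $r^{2n-2}$ to $r^{2m_d-2} = r^{d-1}$ at the cost of the polar integral of $|\nabla^{m_d-n}\partial_{1}^{n}\hat u|^{2}$ (here $\nabla^{l}$ abbreviates the collection of all partial derivatives of order $l$), which is $\lesssim \|\hat u\|_{H^{m_d}(\B)}^{2}$, plus boundary contributions $\frac{1}{|V|}\int_{V}|\nabla^{l}\partial_{1}^{n}\hat u(\omega)|^{2}\,d\sigma(\omega)$ for $l \leq m_d-n-1$. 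The crucial point is that these boundary terms are \emph{spherical traces}, hence bounded by $\|\nabla^{l}\partial_{1}^{n}\hat u\|_{H^{1}(\B)}^{2} \lesssim \|\hat u\|_{H^{l+n+1}(\B)}^{2} \leq \|\hat u\|_{H^{m_d}(\B)}^{2}$ by the $L^{2}$ trace theorem, since $l+n+1 \leq m_d$. Finally, $\|u\|_{L^{2}(0,1)} \leq \|u\|_{L^{\infty}(0,1)} = \|\hat u\|_{L^{\infty}(\B)} \lesssim \|\hat u\|_{H^{m_d}(\B)}$ because $m_d > d/2$. The second equivalence is obtained by the same induction with $m_d$ replaced by $m_d-1$; the only change is that the term $\|u\|_{L^{2}(0,1)}$ (now the $n=0$ term) must be produced by $m_d-1$ iterated Hardy inequalities applied to $r \mapsto \hat u(r\omega)$, since the Sobolev embedding into $L^{\infty}$ is no longer available.

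The step I expect to be the main obstacle is exactly this reverse inequality near the origin: for $n < m_d$ the weight $r^{n-1}$ is, as $r \to 0$, much larger than the natural polar weight $r^{(d-1)/2}$, so the naive bound $\|\hat u\|_{H^{m_d}(\B)} \gtrsim \|(\cdot)^{(d-1)/2}u^{(n)}\|_{L^{2}(0,1)}$ is too weak. The gain has to come from spending the $m_d-n$ spare derivatives through iterated Hardy inequalities, and the delicate bookkeeping is to keep the boundary terms at each step in the form of spherical traces of derivatives of order $\leq m_d-1$ (controlled by the $H^{1}$ trace theorem), rather than pointwise values on the sphere, which are not controlled by $\|\hat u\|_{H^{m_d}(\B)}$.
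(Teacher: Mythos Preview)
Your proof is correct, and both directions are organised differently from the paper.

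For the bound $\|\hat u\|_{H^{m_d}(\B)} \lesssim \Sigma$ you use the elementary expansion $\partial^{\alpha}\hat u = \sum_j a_j^{\alpha}\,u^{(j)}(|\cdot|)$ with homogeneous coefficients and estimate term by term in polar coordinates. The paper instead constructs a compactly supported $C^{m_d}$ extension $U$ of $u$ to $[0,\infty)$, bounds $\|U(|\cdot|)\|_{L^2(\R^d)}+\|\nabla^{m_d}U(|\cdot|)\|_{L^2(\R^d)}$ by $\|u\|_{\Sigma_1}$, and closes via the Fourier characterisation of $H^{m_d}(\R^d)$. Your route is decidedly shorter for this lemma in isolation; the paper's extension, however, is reused later (in the Moser--inequality step of Lemma~\ref{Le:EstNonlinearity}), so it is not dead weight there.

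For the reverse bound $\Sigma \lesssim \|\hat u\|_{H^{m_d}(\B)}$ the paper stays entirely in the radial variable: it works with the iterated radial operator $\nabla_{\mathrm{rad}}^{n}$, observes $\int_0^1 \rho^{d-1}|\nabla_{\mathrm{rad}}^{n}u|^2\,d\rho \lesssim \|\hat u\|_{H^{m_d}}^2$, and then lowers the weight from $\rho^{d-1}$ to $\rho^{2(n-1)}$ by repeated integration by parts and Hardy's inequality, controlling boundary values at $\rho=1$ by the radial trace theorem. Your argument---restricting $\partial_1^{n}\hat u$ to rays through a spherical cap and iterating the one--dimensional Hardy inequality along each ray---performs the same weight--for--derivative trade, with the boundary terms now appearing as genuine $L^2(\partial\B)$ traces. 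The two arguments are comparable in length; yours has the mild advantage that it does not rely on the algebra of $\nabla_{\mathrm{rad}}^{n}$ and would transfer to non-radial settings, while the paper's version keeps the analysis one-dimensional throughout.
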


The proof is given in Appendix \ref{Proof:Lemma_Aux_Norm}. To proceed, we define
\begin{align*}
		 \D u(\rho) := \left( \rho^{-1} \tfrac{d}{d\rho} \right)^{\frac{d-3}{2}} (\rho^{d-2} u(\rho)).
\end{align*}

Note that \begin{align}\label{Eq:DdSum}
\D u(\rho) = \sum_{n=0}^{m_d-2} a_n \rho^{n+1} u^{(n)}(\rho) = a_1 \rho u(\rho)  + \dots + \rho^{\frac{d-1}{2}} u^{(\frac{d-3}{2})}(\rho),
	\end{align}
for constants $a_n > 0$. The kernel of $\D$ consists of functions which are highly singular at the origin,
\begin{align*}
\mathrm{ker } \D =  \begin{cases}  \langle \rho^{-3} \rangle ,  &\quad \text{for } d=5, \\
				\langle \rho^{-3}, \rho^{-5}, \dots, \rho^{-(d-2)} \rangle, & \quad \text{for } d\geq 7. \\
 									\end{cases} 
\end{align*}
We also introduce the integral operator
 \begin{align*}
		\K u(\rho) := \rho^{2-d} \mc K^{\frac{d-3}{2}} u(\rho), \quad \mc Ku(\rho) := \int_0^{\rho} s u(s) ds.
\end{align*}

If $\mb u \in \mc H$, then $u_1,u_2$ are $m_d$--times, respectively, $(m_d-1)$--times weakly differentiable functions.
By Sobolev embedding,  $u_2 \in C^{m_d-2}[\delta,1]$ for every $0 < \delta <1$ and,  since $m_d > \frac{d}{2}$, $u_1 \in C[0,1] \cap C^{m_d-1}[\delta,1]$. Hence, the expressions $\D u_j$, $j=1,2$, are defined as sums of weighted classical derivatives on $(0,1]$. Furthermore, $\mathrm{ker} \D = \{0\}$ on $H^{m_d}_r(\B)$ and $H^{m_d-1}_r(\B)$, respectively. We infer that $\D$ is invertible on $\mc H$ and 
\begin{align*}
\K \D \mb u  = \D \K \mb u= \mb u.
\end{align*}

Now, consider the sesquilinear form $(\cdot| \cdot)_{D}$ defined by
\begin{align*}
 ( \mb u | \mb v )_{D}  & := \big (\D u_1 | \D v_1 \big )_{\dot H^2(0,1)} + \big (\D u_2| \D v_2 \big)_{\dot H^1(0,1)}  \\
 & + \big ([\D u_1]'(1) + [\D u_2](1) \big )   \overline{  \big  ([\D v_1]'(1)  + [\D v_2](1) \big)},
\end{align*}   
and set $\| \mb u \|_{D} := \sqrt{ ( \mb u | \mb u)_{D}}$. For $\mb u \in \mc H$, $(\D u_1)''$ and $(\D u_2)'$ have to be interpreted as sums of weighted weak derivatives. The proof of the next Proposition is provided in Appendix \ref{App:Equivalent_norms}. 

\begin{lemma}\label{Prop:EquivNorms}
$\| \cdot \|_{D}$ and $\| \cdot \|$ are equivalent norms on $\mc H$. In particular,
\begin{align*}
 \|\mb u \|^2 \lesssim \| \D u_1 \|^2_{H^2(0,1)} +  \| \D u_2 \|^2_{H^1(0,1)}   \lesssim \| {\mb u} \|^2_{D}  \lesssim \|\mb u \|^2
\end{align*}
for all $\mb u \in {\mc H} $.
\end{lemma}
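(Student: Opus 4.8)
The plan is to prove the chain of inequalities in Lemma~\ref{Prop:EquivNorms} by establishing the two outer estimates $\|\mb u\|^2 \lesssim \|\D u_1\|_{H^2(0,1)}^2 + \|\D u_2\|_{H^1(0,1)}^2$ and $\|\D u_1\|_{H^2}^2 + \|\D u_2\|_{H^1}^2 \lesssim \|\mb u\|^2$, and then separately sandwiching $\|\mb u\|_D^2$ between these two comparable quantities. For the upper bound $\|\mb u\|_D^2 \lesssim \|\mb u\|^2$, I would expand $\D u_j$ via formula~\eqref{Eq:DdSum} as $\sum_{n=0}^{m_d-2} a_n \rho^{n+1} u_j^{(n)}(\rho)$, differentiate once or twice (for $j=2$ resp.\ $j=1$), and observe that each resulting term is a weighted derivative $\rho^{k} u_j^{(\ell)}$ with $k \geq \ell$, so Lemma~\ref{Le:Aux_Norm} controls all of them; the boundary term is handled by the trace/Sobolev embedding $H^2(0,1) \hookrightarrow C^1[0,1]$ and $H^1(0,1)\hookrightarrow C[0,1]$ applied after the first inclusion is proved, giving $|[\D u_1]'(1)|^2 + |[\D u_2](1)|^2 \lesssim \|\D u_1\|_{H^2}^2 + \|\D u_2\|_{H^1}^2 \lesssim \|\mb u\|^2$.

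The reverse direction is where the actual content lies. First I would show $\|\D u_1\|_{H^2}^2 + \|\D u_2\|_{H^1}^2 \lesssim \|\mb u\|^2$ needs only the same expansion argument as above (it is really part of the ``easy'' side), so the crux is $\|\mb u\|^2 \lesssim \|\D u_1\|_{H^2}^2 + \|\D u_2\|_{H^1}^2$, i.e.\ recovering all the weighted-derivative norms of $u_1,u_2$ from the two norms of $\D u_1, \D u_2$. Here I would invert $\D$ using $\K$: since $\K \D \mb u = \mb u$, one has $u_j(\rho) = \rho^{2-d}\mc K^{(d-3)/2}(\D u_j)(\rho)$, and I would estimate the weighted derivatives $\rho^{n}u_j^{(n)}$ in $L^2(0,1)$ in terms of $\D u_j$ and its derivatives. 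Concretely, I would set $v_j := \D u_j$, note $v_j(0)=0$ (and further vanishing of lower derivatives by the structure of $\D$ near $0$ combined with the regularity of $u_j$), and use Hardy-type inequalities on $(0,1)$ to control $\rho^{2-d}\mc K^{(d-3)/2} v_j$ and its derivatives: each application of $\mc K$ followed by multiplication by an appropriate negative power of $\rho$ is a bounded (Hardy) operation on the relevant weighted $L^2$ space when the vanishing conditions at $0$ hold. Differentiating $u_j = \rho^{2-d}\mc K^{(d-3)/2} v_j$ repeatedly and bookkeeping the powers of $\rho$ produces exactly the weighted quantities $\|\rho^{n-1}u_1^{(n)}\|_{L^2}$ and $\|\rho^n u_2^{(n)}\|_{L^2}$ appearing in Lemma~\ref{Le:Aux_Norm}, bounded by $\|v_j\|_{H^{2}}$ resp.\ $\|v_j\|_{H^1}$.

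Once both outer estimates are in place, to close the loop I must bound $\|\D u_1\|_{H^2}^2 + \|\D u_2\|_{H^1}^2$ by $\|\mb u\|_D^2$. The homogeneous parts $\|[\D u_1]''\|_{L^2}^2 + \|[\D u_2]'\|_{L^2}^2$ are already part of $\|\mb u\|_D^2$ by definition, so it remains to recover the lower-order pieces $\|\D u_j\|_{L^2}, \|[\D u_1]'\|_{L^2}$. Since $v_1 = \D u_1$ and $v_2 = \D u_2$ vanish at $\rho=0$, Poincaré's inequality gives $\|v_j\|_{L^2} \lesssim \|v_j'\|_{L^2}$ and $\|v_1'\|_{L^2} \lesssim \|v_1'(1)\| + \|v_1''\|_{L^2}$; the pointwise value $v_1'(1)$ is controlled by $|v_1'(1) + v_2(1)| + |v_2(1)|$, and $|v_2(1)| = |\int_0^1 v_2'| \lesssim \|v_2'\|_{L^2}$ since $v_2(0)=0$. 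Thus every term is absorbed into $\|\mb u\|_D^2$, completing the sandwich.

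The main obstacle I anticipate is the Hardy-inequality bookkeeping in the reverse direction: one has to track precisely which derivatives of $v_j = \D u_j$ vanish at the origin (this follows from the algebraic form of $\D$ in~\eqref{Eq:DdSum} together with $u_j$ being genuinely a radial $H^{m_d}$, resp.\ $H^{m_d-1}$, function, so that $u_j^{(n)}(0)$ is finite for the relevant $n$), and then apply the weighted Hardy inequalities in the correct order so that no boundary contributions at $0$ are dropped. Establishing this vanishing carefully — and verifying it is exactly enough for each Hardy step to be legitimate — is the technical heart, which is presumably why the authors defer it to Appendix~\ref{App:Equivalent_norms}; everything else is the routine expansion via~\eqref{Eq:DdSum}, Poincaré, and the trace embedding $H^1(0,1)\hookrightarrow C[0,1]$.
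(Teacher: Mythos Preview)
Your plan is correct and mirrors the paper's proof in Appendix~\ref{App:Equivalent_norms} essentially step for step: the paper also proves $\|\mb u\|_D \lesssim \|\mb u\|_\Sigma$ by expanding $\D u_j$ via~\eqref{Eq:DdSum}, recovers the lower-order pieces of $\|\D u_1\|_{H^2}+\|\D u_2\|_{H^1}$ from $\|\mb u\|_D$ by exactly your Poincar\'e/boundary-value trick (writing $|w_1'(\rho)| \leq |w_1'(1)+w_2(1)| + |w_2(\rho)| + \|w_1''\|_{L^2} + \|w_2'\|_{L^2}$), and establishes the hard direction $\|\mb u\|_\Sigma \lesssim \|\D u_1\|_{H^2}+\|\D u_2\|_{H^1}$ by inverting through $\K$ and iterating Hardy. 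One small refinement worth noting for when you write out the details: for the $H^2$ control of $u_1$ the paper first splits off the linear part of $w=\D u_1$ via $w(\rho)=\rho w'(0)+\mc V^2 w''(\rho)$ before applying $\K$ and Hardy, so that only $\|w''\|_{L^2}$ (rather than $\|w'\|_{L^2}$) appears on the right; this is precisely the extra vanishing bookkeeping you flagged as the technical heart.
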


By using the results of Lemma \ref{Le:Aux_Norm} and Lemma \ref{Prop:EquivNorms} together with the Sobolev embedding $H^m(0,1) \hookrightarrow C^{m-1}[0,1]$ and the density of $\Ceven^2$ in $\mc H$, we obtain the next result.

\begin{corollary}\label{Cor:PropDu}
Let $\mb u \in \mc H$. Then $\D u_1 \in C^1[0,1]$, $\D u_2 \in C[0,1]$ and 
\[  \D u_j(0) = 0 , \quad j =1,2. \]
\end{corollary}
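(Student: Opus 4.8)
The plan is to combine the two equivalent-norm statements with a density argument. First, I would note that by Lemma \ref{Prop:EquivNorms} we have the control $\|\D u_1\|_{H^2(0,1)}^2 + \|\D u_2\|_{H^1(0,1)}^2 \lesssim \|\mb u\|^2$ for every $\mb u \in \mc H$. The Sobolev embedding $H^2(0,1) \hookrightarrow C^1[0,1]$ applied to $\D u_1$ and $H^1(0,1) \hookrightarrow C[0,1]$ applied to $\D u_2$ then immediately gives $\D u_1 \in C^1[0,1]$ and $\D u_2 \in C[0,1]$, together with the bound $\|\D u_1\|_{C^1[0,1]} + \|\D u_2\|_{C[0,1]} \lesssim \|\mb u\|$; in particular the maps $\mb u \mapsto \D u_1(0)$ and $\mb u \mapsto \D u_2(0)$ are bounded linear functionals on $\mc H$.

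Next I would establish the boundary value $\D u_j(0)=0$ first on a dense subspace. The natural choice is $\Ceven^2 := C_e^\infty[0,1] \times C_e^\infty[0,1]$ (or a suitable smooth core), which is dense in $\mc H$ by the density statement following \eqref{Def:SobRad}. For $u \in C_e^\infty[0,1]$ the expression $\D u$ is a genuine classical function given by the finite sum in \eqref{Eq:DdSum}, namely $\D u(\rho) = \sum_{n=0}^{m_d-2} a_n \rho^{n+1} u^{(n)}(\rho)$. Every term carries at least one factor of $\rho$, so evaluating at $\rho = 0$ kills all of them and yields $\D u(0) = 0$ directly. Hence $\D u_1(0) = \D u_2(0) = 0$ for all $\mb u$ in the dense set.

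Finally I would pass to the limit: given arbitrary $\mb u \in \mc H$, pick a sequence $\mb u_k$ in the dense core with $\mb u_k \to \mb u$ in $\mc H$. By the boundedness of the functional $\mb u \mapsto \D u_j(0)$ noted above, $\D (u_j)_k(0) \to \D u_j(0)$; since each $\D (u_j)_k(0) = 0$, we conclude $\D u_j(0) = 0$. (Equivalently, one observes $\D (u_j)_k \to \D u_j$ in $C^1[0,1]$ resp.\ $C[0,1]$, so the pointwise values at $0$ converge.) This completes the proof. I do not expect a serious obstacle here — the only point requiring a little care is making sure the chosen dense subspace is genuinely contained in $\mc H$ and that $\D$ acts on its elements as the classical finite sum \eqref{Eq:DdSum} rather than as the weak-derivative interpretation; this is exactly what the discussion preceding Lemma \ref{Prop:EquivNorms} guarantees.
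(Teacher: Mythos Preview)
Your proposal is correct and follows essentially the same route as the paper: invoke Lemma~\ref{Prop:EquivNorms} to control $\D u_1$ in $H^2(0,1)$ and $\D u_2$ in $H^1(0,1)$, apply the Sobolev embedding $H^m(0,1)\hookrightarrow C^{m-1}[0,1]$, and use the density of $\Ceven^2$ in $\mc H$ together with the explicit formula \eqref{Eq:DdSum} (whose terms all carry a factor of $\rho$) to conclude $\D u_j(0)=0$ by approximation.
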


\subsection{The linearized problem}

We exploit the following commutator relations satisfied by $\D$ and its inverse $\K$.

\begin{lemma}\label{Le:Comm_RelD}
Let $u \in H_r^{m_d-1}(\B) \cap C^{m_d-1}(0,1)$ and let $\Lambda$ denote the dilation operator defined by $\Lambda u(\rho) := - \rho u'(\rho)$.
Then
\begin{align}\label{Eq:Comm_Rel_LambdaD}
 \D \Lambda u(\rho) = \Lambda \D u(\rho)  + \D u(\rho) 
\end{align}
for $\rho \in (0,1)$. Furthermore, for $u \in H_r^{m_d}(\B) \cap C^{m_d}(0,1)$ we have the identity
\begin{align}\label{Eq:Comm_Rel_LaplaceD}
\D \Delta_{\rho} u(\rho) = [\D u]''(\rho)
 \end{align}
for $\rho \in (0,1)$. 
\end{lemma}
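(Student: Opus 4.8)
The plan is to prove both commutator identities by reducing them to properties of the operator $L := \rho^{-1}\frac{d}{d\rho}$, since $\D u(\rho) = L^{\frac{d-3}{2}}(\rho^{d-2} u(\rho))$. The key algebraic observation is how $L$ interacts with the dilation $\Lambda u = -\rho u'$ and with multiplication by powers of $\rho$. A direct computation gives, for any smooth $v$ on $(0,1)$, $L(\Lambda v) = \Lambda(Lv) + 2\, Lv$; equivalently, writing $\vartheta := \rho\frac{d}{d\rho} = -\Lambda$, one has $[L,\vartheta] = 2L$. Iterating, $L^{k}\vartheta = (\vartheta + 2k)L^{k}$, i.e. $L^{k}\Lambda = \Lambda L^{k} - 2k L^{k}$. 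Also, for multiplication operators, $L(\rho^{a} v) = a\rho^{a-2} v + \rho^{a} Lv$, so conjugating $\vartheta$ by $\rho^{a}$ shifts it: $\rho^{-a}\vartheta \rho^{a} = \vartheta + a$, and similarly $\rho^{-a} L^{k} \rho^{a}$ is again a constant-coefficient-in-$\vartheta$ operator.

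For \eqref{Eq:Comm_Rel_LambdaD}: I would compute $\D\Lambda u = L^{\frac{d-3}{2}}(\rho^{d-2}\Lambda u)$. Since $\rho^{d-2}\Lambda u = \rho^{d-2}(-\rho u') $ and $\Lambda(\rho^{d-2} u) = \rho^{d-2}(-\rho u') - (d-2)\rho^{d-2} u$, we get $\rho^{d-2}\Lambda u = \Lambda(\rho^{d-2} u) + (d-2)\rho^{d-2} u$. Applying $L^{\frac{d-3}{2}}$ and using $L^{k}\Lambda = \Lambda L^{k} - 2k L^{k}$ with $k = \frac{d-3}{2}$, so $2k = d-3$, yields $\D\Lambda u = \Lambda \D u - (d-3)\D u + (d-2)\D u = \Lambda\D u + \D u$, which is exactly \eqref{Eq:Comm_Rel_LambdaD}. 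One should remark that although $\Lambda u$ need not lie in $H_r^{m_d-1}(\B)$ a priori, the hypothesis $u \in C^{m_d-1}(0,1)$ makes all expressions well-defined classically on $(0,1)$, which is all that is claimed.

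For \eqref{Eq:Comm_Rel_LaplaceD}: the cleanest route is to recall the classical identity that for a radial function on $\R^d$ with $d$ odd, the operator $\Delta_\rho$ conjugates under $u \mapsto \rho^{d-2} u$ and repeated application of $L$ into the one-dimensional Laplacian; concretely, $\frac{d^2}{d\rho^2}\big(L^{\frac{d-3}{2}}(\rho^{d-2} u)\big) = L^{\frac{d-3}{2}}\big(\rho^{d-2}\Delta_\rho u\big)$. I would verify this by induction on $k = \frac{d-3}{2}$: the base case $d=3$ ($k=0$) is the standard reduction $\partial_\rho^2(\rho u) = \rho\,\Delta_\rho u$ for the $3$-dimensional radial Laplacian, and the inductive step uses that passing from dimension $d$ to $d+2$ amounts to applying one more $L = \rho^{-1}\partial_\rho$ together with the intertwining relation $\rho^{-1}\partial_\rho \circ \Delta_\rho^{(d)} = \Delta_\rho^{(d+2)} \circ \rho^{-1}\partial_\rho$ between radial Laplacians in consecutive odd dimensions (which itself is a one-line computation). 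Alternatively, and perhaps more self-containedly, I would just expand both sides using $\D u = \sum_{n=0}^{m_d-2} a_n \rho^{n+1} u^{(n)}$ from \eqref{Eq:DdSum}, differentiate twice, and match coefficients of $\rho^{n-1}u^{(n)}$ — a finite computation whose bookkeeping is the only mildly tedious part.

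The main obstacle is purely the combinatorial bookkeeping in \eqref{Eq:Comm_Rel_LaplaceD}: keeping track of the coefficients $a_n$ and verifying the telescoping that makes the lower-order terms cancel. The commutator structure $[L,\vartheta]=2L$ organizes this, but one still has to be careful that the induction on dimension correctly tracks the weight $\rho^{d-2}$; I expect the dimension-shift intertwining identity for radial Laplacians to be the conceptual crux that makes everything collapse cleanly. The regularity hypotheses $u \in C^{m_d-1}(0,1)$ resp.\ $C^{m_d}(0,1)$ are exactly what is needed so that no boundary or distributional subtleties arise and all manipulations are classical on the open interval $(0,1)$.
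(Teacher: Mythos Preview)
Your argument is correct. For \eqref{Eq:Comm_Rel_LambdaD} you take a genuinely different route from the paper: the paper proceeds by induction on the odd dimension $d$, checking $d=5$ by hand and then passing from $D_d$ to $D_{d+2}$ via the recursion $D_{d+2} u = \rho^{-1}[D_d((\cdot)^2 u)]'$ together with auxiliary identities for how $\Lambda$ interacts with multiplication by $\rho^2$ and with $\rho^{-1}\partial_\rho$. Your approach instead isolates the single commutator relation $[L,\vartheta]=2L$, iterates it to $L^k\Lambda=\Lambda L^k-2kL^k$, and combines this with the conjugation identity $\rho^{d-2}\Lambda u=\Lambda(\rho^{d-2}u)+(d-2)\rho^{d-2}u$ to obtain the result in one stroke for all $d$ simultaneously. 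This is arguably cleaner and more conceptual, since it exposes the underlying $\mathfrak{sl}_2$-type structure rather than hiding it inside an induction; the paper's version, on the other hand, requires no algebraic bookkeeping beyond two short identities per step.

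For \eqref{Eq:Comm_Rel_LaplaceD} the paper simply cites the identity as well-known (Evans, p.~75) and gives no argument. Your sketched induction via the intertwining $L\,\Delta_\rho^{(d)}=\Delta_\rho^{(d+2)}\,L$ is a valid way to supply the missing proof; just note that the inductive step is not literally an application of that intertwining alone, since $D_{d+2}=L\,D_d\,M_{\rho^2}$ also carries the weight shift $\rho^{d-2}\to\rho^d$, so one additionally needs the elementary identity $\Delta_\rho^{(d)}(\rho^2 u)=\rho^2\Delta_\rho^{(d+2)}u+2\rho^{1-d}(\rho^d u)'$ (or an equivalent reformulation) to close the loop. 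This is the ``bookkeeping'' you anticipate, and it goes through.
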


\begin{proof}
We note that the assumptions on $u$ imply that the above expressions vanish if and only if $u = 0$. To prove the identities, we proceed by induction. A direct calculation shows that for $d=5$, $D_5\Lambda u  = \Lambda D_5 u + D_5 u $. Assuming that Eq.~\eqref{Eq:Comm_Rel_LambdaD} is true for some odd number $d > 5$, we use the identities
\begin{align*}
(\cdot)^2 \Lambda u = \Lambda [(\cdot)^2 u] + 2 (\cdot)^2 u, \quad (\cdot)^{-1} [\Lambda u]' = \Lambda [(\cdot)^{-1} u'] - 2 (\cdot)^{-1} u',
\end{align*} to obtain 
\begin{align*}
D_{d+2}   \Lambda u(\rho)  & =   \rho^{-1} \big [D_{d}  [(\cdot)^2\Lambda u] \big ]'(\rho)   = \rho^{-1} \big [D_{d} \Lambda [(\cdot)^2 u] \big ]'(\rho)   + 2 D_{d+2} u(\rho)   \\
& = \rho^{-1} \big [\Lambda \D  [ (\cdot)^2 u] \big ]'(\rho)+ 3  D_{d+2} u(\rho)   = \Lambda D_{d+2} u(\rho) +  D_{d+2}  u(\rho).
\end{align*}
The identity given in Eq.~\eqref{Eq:Comm_Rel_LaplaceD} is well-known, cf.~\cite{Evans}, p.~75. 
\end{proof}

\begin{lemma}\label{Le:Comm_RelK}
Let $w \in C^1[0,1] \cap C^2(0,1)$ satisfy $w(0)=0$. Then
\begin{align}\label{Eq:Comm_Rel_LambdaK}
 \K \Lambda w = \Lambda \K w   - \K w ,
\end{align}
and
\begin{align}\label{Eq:Comm_Rel_LaplaceK}
\K w'' = \Delta_{\rho} \K w.
\end{align}
\end{lemma}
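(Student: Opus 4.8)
\textbf{Proof strategy for Lemma \ref{Le:Comm_RelK}.}

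The plan is to obtain both identities simply by ``conjugating'' the corresponding identities for $\D$ from Lemma \ref{Le:Comm_RelD}, using that $\K$ is the two-sided inverse of $\D$ on the relevant space. First I would fix the functional setup: for $w \in C^1[0,1] \cap C^2(0,1)$ with $w(0)=0$, the function $u := \K w$ is smooth on $(0,1]$, lies in $H_r^{m_d}(\B)$ (or at least is $m_d$-times weakly differentiable with the integrability needed to apply $\D$), and satisfies $\D u = \D \K w = w$ by the inversion relation $\D \K = \id$ established after Corollary \ref{Cor:PropDu}. The boundary condition $w(0)=0$ and the explicit form $\K w(\rho) = \rho^{2-d}\mc K^{(d-3)/2} w(\rho)$ should be exactly what guarantees $\K w$ is regular enough at the origin for the commutator identities of Lemma \ref{Le:Comm_RelD} to apply to $u=\K w$; I would check this by tracking the powers of $\rho$ produced by each application of $\mc K$ against the prefactor $\rho^{2-d}$.

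For Eq.~\eqref{Eq:Comm_Rel_LambdaK}: apply $\K$ to both sides of Eq.~\eqref{Eq:Comm_Rel_LambdaD} with $u = \K w$. The left side becomes $\K(\D \Lambda \K w)$, and the right side becomes $\K(\Lambda \D \K w + \D \K w) = \K(\Lambda w + w) = \K\Lambda w + \K w$. To finish I need $\K \D (\Lambda \K w) = \Lambda \K w$, i.e.~that $\Lambda \K w$ is again in the range on which $\K\D = \id$ holds; equivalently that $\Lambda \K w$ is $m_d$-times weakly differentiable with $\D(\Lambda \K w)$ integrable, which follows from the regularity of $\K w$ together with the fact that $\Lambda = -\rho\,\tfrac{d}{d\rho}$ preserves the relevant weighted spaces (this is essentially the content of Eq.~\eqref{Eq:DdSum} and Lemma \ref{Le:Aux_Norm}). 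Rearranging gives $\K\Lambda w = \Lambda \K w - \K w$, which is exactly Eq.~\eqref{Eq:Comm_Rel_LambdaK}.

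For Eq.~\eqref{Eq:Comm_Rel_LaplaceK}: apply $\K$ to both sides of Eq.~\eqref{Eq:Comm_Rel_LaplaceD} with $u = \K w$, so $\D\Delta_\rho \K w = [\D \K w]'' = w''$. Applying $\K$ yields $\K\D(\Delta_\rho \K w) = \K w''$, and the left side equals $\Delta_\rho \K w$ provided $\Delta_\rho \K w$ lies in the domain where $\K\D=\id$ — again a regularity check, using that $\K w$ is $C^2$ on $(0,1)$ and that its boundary behavior at $0$ is controlled. This gives Eq.~\eqref{Eq:Comm_Rel_LaplaceK}. Alternatively, and perhaps more cleanly, one can verify both identities by a direct induction on $d$ paralleling the proof of Lemma \ref{Le:Comm_RelD}: for $d=5$, $\K_5 w(\rho) = \rho^{-3}\int_0^\rho s\,w(s)\,ds$, and a direct computation using $\int_0^\rho s\,(\Lambda w)(s)\,ds = -\rho^2 w(\rho) + 2\int_0^\rho s\,w(s)\,ds$ (integration by parts, valid since $w(0)=0$) gives the base case; the inductive step uses the dual pair of identities to those in Lemma \ref{Le:Comm_RelD}, namely $\mc K(\Lambda w) = \Lambda(\mc K w) - 2\,\mc K w$ and the analogue for the prefactor $\rho^{2-d}$.

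\textbf{Main obstacle.} The genuine content is not the algebra — which is a short conjugation or a routine induction — but justifying that at each step the intermediate functions ($\Lambda \K w$, $\Delta_\rho \K w$) remain in the precise class on which $\D$ and $\K$ are honest two-sided inverses of each other, i.e.~that no spurious kernel element of $\D$ (the singular powers $\rho^{-3},\dots,\rho^{-(d-2)}$) is introduced. The hypothesis $w(0)=0$ together with $w \in C^1[0,1]$ is what rules this out, so the careful point is to show that applying $\Lambda$ or $\Delta_\rho$ to $\K w$ does not destroy the vanishing/regularity at the origin that $\K w$ inherits from $w(0)=0$. I expect this to be handled by the same density and weighted-$L^2$ arguments already used for Lemma \ref{Prop:EquivNorms} and Corollary \ref{Cor:PropDu}.
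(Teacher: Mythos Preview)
Your alternative route — direct induction on $d$ paralleling the proof of Lemma \ref{Le:Comm_RelD} — is precisely what the paper does. The base case $d=5$ is handled by the integration-by-parts identity you wrote down (with a minor sign slip: one has $\mc K(\Lambda w)=\Lambda(\mc K w)+2\,\mc K w$, not $-2\,\mc K w$), and the inductive step uses $K_{d+2}=(\cdot)^{-2}K_d\mc K$ together with the elementary commutation rules for $\mc K$ and $(\cdot)^{-2}$ against $\Lambda$. For \eqref{Eq:Comm_Rel_LaplaceK} the paper again inducts, noting explicitly that the boundary term arising from integration by parts in the $d=5$ base case is exactly $\rho^{-3}w(0)$, so $w(0)=0$ is where the hypothesis enters.

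Your primary (conjugation) approach is also correct, but it is genuinely different from the paper's. The algebra is indeed trivial once you know $\K\D=\id$ on the intermediate objects, and the obstacle you flag is real: one must check that $\Lambda\K w$ and $\Delta_\rho\K w$ carry no component in $\ker\D=\langle\rho^{-3},\dots,\rho^{-(d-2)}\rangle$. This can be done --- from $w\in C^1[0,1]$, $w(0)=0$ one gets $\mc K^{(d-3)/2}w(\rho)=O(\rho^{d-2})$, hence $\K w=O(1)$, $\Lambda\K w=O(1)$, $\K\Lambda w=o(\rho^{-1})$ near $0$, all of which are $o(\rho^{-3})$ and therefore cannot differ by a nonzero kernel element --- but carrying this out carefully (especially for $\Delta_\rho\K w$ in general $d$) is comparable in effort to the direct computation. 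So the conjugation buys a cleaner conceptual picture (the $\K$-identities are literally the $\D$-identities transported by the inverse), while the paper's induction buys a shorter, self-contained argument that never touches the function-space machinery at all.
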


\begin{proof}
For $d=5$, Eq.~\eqref{Eq:Comm_Rel_LambdaK} follows from integration by parts. Assume that it is true for some $d >5$ odd. We use the identities
\begin{align*}
\mc K \Lambda w = \Lambda \mc K w + 2 \mc K, \quad (\cdot)^{-2} \Lambda w = \Lambda [(\cdot)^{-2} w] - 2 (\cdot)^{-2} w
\end{align*}
to infer that
\begin{align*}
K_{d+2} \Lambda w & = (\cdot)^{-2} K_d \mc K \Lambda w = (\cdot)^{-2} K_d  \Lambda \mc K  w + 2  K_{d+2}  w \\
& = (\cdot)^{-2} \Lambda  K_d \mc K w +  K_{d+2}  w = \Lambda K_{d+2} w -  K_{d+2} w.
\end{align*}
Using integration by parts one can easily check that Eq.~\eqref{Eq:Comm_Rel_LaplaceK} is true for $d=5$ provided that $w(0) = 0$. Assume that it holds for some odd number $d > 5$. To clarify notation we write $\Delta_{\rho}^{(d)}u(\rho) := \rho^{1-d} (\rho^{d-1} u'(\rho))'$. A straightforward calculation shows that
\begin{align*}
[K_{d+2} w''](\rho) & = \rho^{-d} \int_0^{\rho} s^{d-1} [K_d w''](s) ds = \rho^{-d} \int_0^{\rho} s^{d-1} [\Delta^{(d)}_{s} K_d w](s)  ds \\
& = \rho^{-1} [K_d w]'(\rho) = (2-d) \rho^{-d} \mc K^{\frac{d-3}{2}} w(\rho)  + \rho^{2-d} \mc K^{\frac{d-5}{2}} w(\rho) \\
& = \Delta^{(d+2)}_{\rho} K_d w(\rho).
\end{align*}

\end{proof}

\subsubsection{Well-posedness of the linearized time evolution}\label{Sec:LinWellPosed}

We define the operator $(\tilde{\mb L}_0, \mc D(\tilde{\mb L}_0))$ by 
\begin{align}\label{Def:L0}
  \tilde{\mb L}_0 \mb u(\rho):=  \left ( \begin{array}{c} u_2(\rho) + \Lambda u_1(\rho) - \frac{2}{p-1}  u_1(\rho) \\ 						
 \Delta_{\rho} u_1(\rho) + \Lambda u_2(\rho) - \frac{p+1}{p-1}  u_2(\rho)   \end{array} \right),
\end{align}
 \begin{align*}
& \mc D(\tilde{\mb L}_0) :=  \left \{  \mb u \in  \mc H \cap C^{\infty}(0,1)^2: \D u_2 \in C^2[0,1], \right. \\
& \left. \phantom{!!!!!!!!!!!!!!!!!!!!!!!!!!!!!!!} \D u_1 \in C^3[0,1],  [\D u_1]''(0) = 0   \right \}. 
\end{align*}

Using the results of Lemma \ref{Le:Comm_RelD} we get that
\begin{align}\label{Eq:CommRel_DL}
\D (\tilde{\mb L}_0 \mb u)_{j} = (\mb A_0 \D \mb u)_{j}
\end{align}
for $j=1,2$, where 
\[  \mb A_0 \mb w(\rho):=  \left ( \begin{array}{c} w_2(\rho) - \rho w'_1(\rho)  +  \frac{p-3}{p-1}  w_1(\rho) \\ 						
  w''_1(\rho) - \rho  w'_2(\rho)  - \frac{2}{p-1}  w_2(\rho)   \end{array} \right).\]
 In view of Lemma \ref{Prop:EquivNorms} is now obvious that the regularity properties satisfied by functions in $\mc D(\tilde{\mb L}_0)$ imply that $\tilde{\mb L}_0 \mb u \in \mc H$.
We note that $\Ceven^2\subset \mc D(\tilde{\mb L}_0)$, i.e.,  $\tilde{\mb L}_0$ is densely defined.

\begin{lemma}\label{Le:LumerPhillips_Est}
Let $\mb u \in \mc D(\tilde{\mb L}_0)$. Then
\begin{align*}
\mathrm{Re} ( \tilde{\mb L}_0  \mb u | \mb u)_{D} \leq  - \tfrac{2}{p-1} \| \mb u \|^2_{D}. 
\end{align*}
\end{lemma}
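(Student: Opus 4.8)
The strategy is to transfer the computation to the ``$D$-side'' via the intertwining identity \eqref{Eq:CommRel_DL}. Writing $\mb w = \D \mb u$, so that $w_1 = \D u_1$, $w_2 = \D u_2$, and using \eqref{Eq:CommRel_DL}, the quantity $\Re(\tilde{\mb L}_0 \mb u \mid \mb u)_D$ becomes, by the definition of $(\cdot \mid \cdot)_D$,
\begin{align*}
\Re(\tilde{\mb L}_0 \mb u \mid \mb u)_D = \Re\Big[ \big((\mb A_0 \mb w)_1 \mid w_1\big)_{\dot H^2(0,1)} + \big((\mb A_0 \mb w)_2 \mid w_2\big)_{\dot H^1(0,1)} + \big([(\mb A_0\mb w)_1]'(1) + [(\mb A_0 \mb w)_2](1)\big)\overline{\big(w_1'(1) + w_2(1)\big)}\Big],
\end{align*}
where one uses that $\D$ commutes with taking the relevant (weak) derivatives at the endpoint. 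I would then plug in the explicit form of $\mb A_0$: $(\mb A_0\mb w)_1 = w_2 - \rho w_1' + \frac{p-3}{p-1} w_1$ and $(\mb A_0 \mb w)_2 = w_1'' - \rho w_2' - \frac{2}{p-1} w_2$.

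\textbf{Key steps.} The $\dot H^2$ term requires $\Re(\,(\mb A_0\mb w)_1'' \mid w_1'')_{L^2}$; differentiating twice, the term $-\rho w_1'$ produces $-\rho w_1''' - 2w_1''$, and $w_2$ produces $w_2''$, while $\frac{p-3}{p-1}w_1$ produces $\frac{p-3}{p-1}w_1''$. The $\dot H^1$ term requires $\Re((\mb A_0\mb w)_2' \mid w_2')_{L^2}$; here $w_1''$ produces $w_1'''$, which pairs against $w_2'$, and integration by parts on $\int_0^1 \Re(w_1''' \overline{w_2'})$ combined with $\int_0^1 \Re(w_2'' \overline{w_1''})$ from the first term is designed to telescope into a boundary term at $\rho = 1$ (the cross terms $w_1$--$w_2$ in the interior cancel). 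The crucial structural input is that $\Re\int_0^1 (-\rho w' - a w)\overline{w'}\,d\rho$, after integration by parts, equals $\big(\tfrac12 - a\big)\int_0^1 |w'|^2 - \tfrac12 |w'(1)|^2$ up to boundary contributions, so each diagonal block contributes a coefficient that, with $a = 2 - \frac{p-3}{p-1} = \frac{p+1}{p-1}$ for the $\dot H^2$ block and $a = 1 + \frac2{p-1} = \frac{p+1}{p-1}$ for the $\dot H^1$ block (the extra $1$ or $2$ coming from differentiating $-\rho w'$), one gets exactly $-\frac2{p-1}$ times the corresponding seminorm plus a boundary term. Finally the boundary term, using $[(\mb A_0 \mb w)_1]'(1) + [(\mb A_0\mb w)_2](1)$: one computes $(\mb A_0 \mb w)_1' = w_2' - w_1' - \rho w_1'' + \frac{p-3}{p-1}w_1'$ and evaluates at $\rho = 1$ to get $w_2'(1) - w_1''(1) + \frac{p-3}{p-1}w_1'(1) - w_1'(1)$; combining with $(\mb A_0\mb w)_2(1) = w_1''(1) - w_2'(1) - \frac2{p-1}w_2(1)$, the $w_1''(1)$ and $w_2'(1)$ cancel, leaving $-\frac2{p-1}(w_1'(1) + w_2(1)) - 2(w_1'(1) + w_2(1)) + \text{(already accounted for)}$, so that all leftover boundary terms from the interior integrations by parts combine with this into $-\frac2{p-1}|w_1'(1)+w_2(1)|^2$ plus manifestly non-positive squares. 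One must also check that the boundary terms at $\rho = 0$ vanish; this uses Corollary \ref{Cor:PropDu} ($\D u_j(0) = 0$) and the defining condition $[\D u_1]''(0) = 0$ in $\mc D(\tilde{\mb L}_0)$, which kills the terms involving $w_1(0)$, $w_1''(0)$ and the like.

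\textbf{Main obstacle.} The bookkeeping of the boundary terms at $\rho = 1$ is the delicate part: several integrations by parts each throw off a boundary contribution, and these must be shown to reassemble \emph{exactly} into $-\frac2{p-1}|w_1'(1) + w_2(1)|^2$ together with terms like $-\tfrac12|w_1''(1)|^2$ or $-\tfrac12|w_2'(1)|^2$ (or cancel against each other), with no residual indefinite cross terms surviving. This is precisely the point where the somewhat unusual definition of $(\cdot \mid \cdot)_D$ — in particular the specific boundary functional $[\D u_1]'(1) + [\D u_2](1)$, mirroring the conserved quantity $w_r + w_t$ for the 1D wave equation mentioned in Section \ref{Sec:Outline} — is engineered to make everything fit. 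I would organize the computation by treating the three pieces of $\|\cdot\|_D^2$ separately, carefully recording every boundary term, and only at the end summing them; a sign error anywhere propagates, so the calculation should be done with explicit care rather than sketched. Once the algebra closes, the inequality $\Re(\tilde{\mb L}_0\mb u \mid \mb u)_D \le -\frac2{p-1}\|\mb u\|_D^2$ follows immediately since the non-diagonal remainder is a sum of non-positive squares.
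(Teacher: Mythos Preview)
Your proposal is correct and follows essentially the same route as the paper: transfer to the $D$-side via \eqref{Eq:CommRel_DL}, compute the three pieces of $(\cdot|\cdot)_D$ separately using the integration-by-parts identity $\Re(\Lambda w|w)_{L^2}=\tfrac12\|w\|^2-\tfrac12|w(1)|^2$, and then combine the leftover boundary terms at $\rho=1$ into $-\tfrac{2}{p-1}|w_1'(1)+w_2(1)|^2 -\tfrac12|w_1''(1)-w_2'(1)|^2$, exactly as you anticipate. One small slip to fix when you write it out: in your displayed integration-by-parts formula the integrand should read $(-\rho v' - a v)\overline{v}$ with $v$ standing for $w_1''$ (resp.\ $w_2'$), not $(-\rho w' - a w)\overline{w'}$; with that corrected the coefficients come out as you state.
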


\begin{proof}
To abbreviate the notation we set $w_j := \D u_j$, $j=1,2$, for $\mb u \in \mc D(\tilde{\mb L}_0)$. By Eq.~\eqref{Eq:CommRel_DL} 
\begin{align*}
[\D( \tilde{\mb L}_0 \mb u)_1]'(\rho)+ [\D( \tilde{\mb L}_0 \mb u)_2](\rho) & = w'_2(\rho) - \rho  w''_1(\rho) + w''_1(\rho)  \\
& - \rho w'_2(\rho) - \tfrac{2}{p-1} \left ( w_1'(\rho)+ w_2(\rho) \right ).
\end{align*}
Evaluation at $\rho = 1$ yields 
\begin{align*}
 [\D( \tilde{\mb L}_0 \mb u)_1]'(1) + [\D( \tilde{\mb L}_0 \mb u)_2](1) = - \tfrac{2}{p-1} \left [ w_1'(1)+ w_2(1) \right ].
\end{align*}

Furthermore,
\begin{align*}
[\D( \tilde{\mb L}_0 \mb u)_1]''(\rho) =  w_2''(\rho)   + \Lambda w_1''(\rho)  - \tfrac{p+1}{p-1}  w_1''(\rho) ,  
\end{align*}
and 
\begin{align*}
[\D( \tilde{\mb L}_0 \mb u)_2]'(\rho)  =w_1'''(\rho)   + \Lambda w_2'(\rho)  - \tfrac{p+1}{p-1} w_2'(\rho) , 
\end{align*}
for $\rho \in (0,1)$.
Note that for functions $w \in C[0,1] \cap C^1(0,1)$ integration by parts yields
\[ \mathrm{Re} (\Lambda w| w)_{L^2(0,1)} = \tfrac{1}{2} \|w\|^2_{L^2(0,1)} - \tfrac{1}{2} |w(1)|^2.\]
With this we infer that
\begin{align*}
\begin{split}
\mathrm{Re}  \big ([\D( \tilde{\mb L}_0  \mb u)_1]'' \big |  [\D u_1]'' \big)_{L^2(0,1)} & = \mathrm{Re} \left (w_2''| w_1'' \right)_{L^2(0,1)}  - \tfrac{1}{2} \left |w_1''(1) \right |^2  \\
& - \left (\tfrac{1}{2} +\tfrac{2}{p-1} \right)  \left \|w_1'' \right \|^2_{L^2(0,1)},
\end{split}
\end{align*}
and 
\begin{align*}
\begin{split}
\mathrm{Re}  \big ([\D( \tilde{\mb L}_0  \mb u)_2]' \big |  [\D u_2]' \big )_{L^2(0,1)} & = \mathrm{Re} \left (w_1'''| w_2' \right)_{L^2(0,1)}  - \tfrac{1}{2} \left |w_2'(1) \right |^2  \\ & - \left (\tfrac{1}{2} +\tfrac{2}{p-1} \right)  \left \|w_2' \right \|^2_{L^2(0,1)}.
\end{split}
\end{align*}
Using these identities and performing one additional integration by parts we obtain
\begin{align*}
\mathrm{Re} ( \tilde{\mb L}_0 & \mb u| \mb u)_{D}= - \tfrac{2}{p-1} \big | [\D u_1]'(1)+ [\D u_2](1) \big|^2  \\
&  - \left (\tfrac{1}{2} +\tfrac{2}{p-1} \right) \big(  \big \|[\D u_1]''  \big \|^2_{L^2(0,1)} +    \big \|[\D u_2]'  \big \|^2_{L^2(0,1)} \big) \\
 &  - \tfrac{1}{2} \big | [\D u_1]''(1)-  [\D u_2]'(1) \big |^2 \leq  - \tfrac{2}{p-1} \| \mb u \|^2_{D}.
\end{align*}
\end{proof}

\begin{lemma}\label{Le:DenseRange}
Set $\mu = 1 - \tfrac{2}{p-1}$. For every $\mb f = (f_1,f_2)^T  \in \Ceven^2$ there exists a function $\mb u \in \mc D(\tilde{\mb L}_0)$ satisfying the equation
\[  (\mu - \tilde{\mb L}_0) \mb u =\mb  f. \]
\end{lemma}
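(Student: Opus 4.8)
The plan is to solve the resolvent equation $(\mu - \tilde{\mb L}_0)\mb u = \mb f$ explicitly by conjugating with the operator $\D$. By the commutator relations of Lemma \ref{Le:Comm_RelD} (specifically Eq.~\eqref{Eq:CommRel_DL}), applying $\D$ componentwise transforms the equation into $(\mu - \mb A_0)\mb w = \D \mb f =: \mb h$, where $\mb w = \D \mb u = (\D u_1, \D u_2)^T$ and $\mb A_0$ is the conjugated operator displayed after Eq.~\eqref{Eq:CommRel_DL}. Writing this out, the first component gives
\begin{align*}
\mu w_1 - w_2 + \rho w_1' - \tfrac{p-3}{p-1} w_1 = h_1,
\end{align*}
so that, using $\mu = 1 - \tfrac{2}{p-1}$ and hence $\mu - \tfrac{p-3}{p-1} = 1 - \tfrac{2}{p-1} - 1 + \tfrac{4}{p-1} = \tfrac{2}{p-1}$, one can eliminate $w_2 = \rho w_1' + \tfrac{2}{p-1} w_1 - h_1$. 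Substituting into the second component produces a single second-order linear ODE for $w_1$ on $(0,1)$ of the form $(1-\rho^2) w_1'' + (\text{first order}) = (\text{data built from } h_1, h_2)$. This is a classical Fuchsian-type ODE with regular singular points at $\rho = 0, \pm 1$; in fact it is essentially a hypergeometric equation, consistent with the remark in Section \ref{Sec:Outline} that the eigenvalue problem is solved in terms of hypergeometric functions.

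Next I would solve this ODE. Since $\mu > 0$ is not an eigenvalue of the linearized generator (the only eigenvalue in the right half-plane being $\lambda = 1$, and $\mu = 1 - \tfrac{2}{p-1} < 1$), the homogeneous equation has no solution lying in the relevant space, so the inhomogeneous equation is uniquely solvable by variation of parameters once the correct boundary behavior is prescribed. Concretely, I would pick the solution that is regular at $\rho = 0$ (the indicial analysis at $\rho=0$ forces $w_1(0)=0$, matching Corollary \ref{Cor:PropDu}), integrate from $0$, and check that the resulting $w_1$ extends smoothly across $\rho = 1$: the singular point $\rho = 1$ has indices $0$ and something positive (or the data compatibility kills the log term), so for $\mb f \in \Ceven^2$ the solution is smooth on $[0,1]$. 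Then set $u_1 := \K w_1$ and $u_2 := \K w_2$ using the inverse operator $\K$ from Lemma \ref{Le:Comm_RelK}; since $\K \D = \id$ on $\mc H$ and $w_1(0) = 0$, the commutator relations Eq.~\eqref{Eq:Comm_Rel_LambdaK}--\eqref{Eq:Comm_Rel_LaplaceK} guarantee that $\mb u = (u_1,u_2)^T$ solves the original equation. Finally I would verify $\mb u \in \mc D(\tilde{\mb L}_0)$: smoothness on $(0,1)$ and membership in $\mc H$ follow from $w_j = \D u_j \in C^\infty[0,1]$ via Lemma \ref{Prop:EquivNorms}; the conditions $\D u_1 \in C^3[0,1]$, $\D u_2 \in C^2[0,1]$ and $[\D u_1]''(0) = 0$ translate into regularity and vanishing statements for $w_1$ at the origin that I would read off from the ODE and the fact that $h_1, h_2$ come from $\Ceven^2$ functions under $\D$.

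The main obstacle I expect is the analysis at the boundary point $\rho = 1$: one must show that the solution selected by regularity at $\rho = 0$ does \emph{not} pick up a singularity (e.g.\ a logarithmic term or a negative-power term) as $\rho \to 1^-$, so that $w_1 \in C^\infty[0,1]$ and the associated $\mb u$ genuinely lies in $\mc H$ and in $\mc D(\tilde{\mb L}_0)$. This is exactly the step where the value of $\mu$ and the superconformal condition $p > \frac{d+3}{d-1}$ enter: they pin down the indicial exponents at $\rho = 1$ and ensure that no resonance occurs (equivalently, that $\mu$ is not in the spectrum). A secondary, more routine point is bookkeeping the weighted-derivative structure of $\D$ and $\K$ to confirm the endpoint regularity conditions in the definition of $\mc D(\tilde{\mb L}_0)$; this is tedious but follows from Lemma \ref{Le:Aux_Norm}, Lemma \ref{Prop:EquivNorms}, and Corollary \ref{Cor:PropDu}. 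Together with Lemma \ref{Le:LumerPhillips_Est}, which supplies the dissipativity estimate, this density-of-range statement is what the Lumer--Phillips theorem needs to conclude that the closure of $\tilde{\mb L}_0$ generates a strongly continuous semigroup.
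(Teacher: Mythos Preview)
Your overall strategy---conjugate by $\D$, solve for $\mb w = \D\mb u$, then recover $\mb u$ via $\K$---matches the paper. But you miss the algebraic reason the value $\mu = 1 - \tfrac{2}{p-1}$ is chosen, and this leads you to overcomplicate the analysis and to propose an incorrect resolution of the boundary issue at $\rho=1$.

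Your arithmetic slips: $\tfrac{p-3}{p-1} = 1 - \tfrac{2}{p-1}$, so $\mu - \tfrac{p-3}{p-1} = 0$, not $\tfrac{2}{p-1}$. Hence the first component of $(\mu - \mb A_0)\mb w = \D\mb f$ reads simply $\rho w_1' - w_2 = \D f_1$, and after eliminating $w_2$ the second-order equation for $w_1$ has \emph{no zeroth-order term}:
\[
-(1-\rho^2)w_1'' + 2\rho w_1' = F, \qquad F := \D f_2 + \D f_1 + \rho(\D f_1)'.
\]
This is $\big[(1-\rho^2)w_1'\big]' = -F$, a first-order linear ODE in $w_1'$. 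The paper integrates it explicitly:
\[
w_1(\rho) = \int_0^\rho \frac{1}{1-s^2}\int_s^1 F(s')\,ds'\,ds, \qquad
w_2(\rho) = \frac{\rho}{1-\rho^2}\int_\rho^1 F(s)\,ds - \D f_1(\rho).
\]
The inner integral vanishes at $s=1$, cancelling the $\tfrac{1}{1-s^2}$ singularity, so one reads off $w_1 \in C^3[0,1]$, $w_2 \in C^2[0,1]$ with $w_1(0)=w_1''(0)=w_2(0)=0$ directly. No hypergeometric functions, no indicial analysis, and no hypothesis on $p$ are needed; your anticipated ``main obstacle'' at $\rho=1$ evaporates once the correct constant of integration is chosen.

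Two further points. First, the superconformal condition $p > \tfrac{d+3}{d-1}$ plays no role in this lemma; the construction works for every $p>1$. Second, your justification that ``$\mu$ is not an eigenvalue'' is circular: the spectrum of $\mb L_0$ has not yet been established---this lemma is part of the Lumer--Phillips input needed to construct the semigroup in the first place---and the eigenvalue $\lambda=1$ you cite belongs to the perturbed operator $\mb L = \mb L_0 + \mb L'$, not to $\mb L_0$.
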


\begin{proof}
Trivially, for $\mb f = 0$ we have $\mb u = 0$. Assume that $\mb f  \in \Ceven^2$ does not vanish identically. We set \[F(\rho) :=  \D f_2(\rho) + \D f_1(\rho) + \rho [\D f_1]'(\rho)\]
and define functions 
\begin{align*}
\begin{split}
w_1(\rho) & := \int_0^{\rho} \frac{1}{1-s^2} \int_s^1 F(s')ds' ds, \\
 w_2(\rho) & :=  \frac{\rho}{1-\rho^2}  \int_\rho^1 F(s)ds - \D f_1(\rho).
 \end{split}
\end{align*}
The properties of $F$ imply that $w_1 \in C^{\infty}(0,1) \cap C^3[0,1]$, $w_2 \in  C^{\infty}(0,1)  \cap C^2[0,1]$ and the functions satisfy the boundary conditions $w_1(0) = w_1''(0)  = w_2(0) = 0$.
A direct calculation shows that $w_1, w_2$ solve the system of equations
\begin{align}\label{Eq:LumPh1}
\begin{split}
 \rho w_1'(\rho)  - w_2(\rho)  & = \D f_1(\rho), \\
w_2(\rho) - w_1''(\rho) + \rho w_2'(\rho) & = \D f_2(\rho).
 \end{split}
\end{align}
We apply $\K$ to Eq.~\eqref{Eq:LumPh1} and use the results of Lemma \ref{Le:Comm_RelK} to obtain
\begin{align*}
\begin{split}
\K w_1(\rho) - \K w_2(\rho)  - \Lambda \K w_1(\rho) & = f_1(\rho), \\
2 \K w_2(\rho) - \Delta_{\rho} \K w_1(\rho)  - \Lambda \K w_1(\rho) & = f_2(\rho).
\end{split}
\end{align*}
Upon setting $u_j(\rho)  := \K w_j$ for $j=1,2$ and defining $\mb u := (u_1,u_2)^T$ we obtain a solution of the equation $(\mu - \tilde{\mb L}_0) \mb u = \mb f.$  The properties of the functions $w_j$ imply that $\mb u \in \mc D(\tilde{\mb L}_0)$ and the claim follows. 
\end{proof}

\begin{lemma}\label{Le:Compact_Pert}
The operator $\mb L': \mc H \to \mc H$ defined by
\begin{align}\label{Def:Perturbation}
 \mb L' \mb u :=  \left ( \begin{array}{c}  0 \\ pc_p^{p-1} u_1  \end{array} \right)
 \end{align}
is compact.
\end{lemma}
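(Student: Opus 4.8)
The plan is to realize $\mb L'$ as a compact Sobolev embedding sandwiched between two bounded operators. First I would factor
\[
\mb L' = \iota \circ \mc M \circ \pi,
\]
where $\pi : \mc H \to H_r^{m_d}(\B)$, $\pi \mb u := u_1$, is the (bounded) coordinate projection onto the first component, $\mc M : H_r^{m_d}(\B) \to H_r^{m_d-1}(\B)$ is multiplication by the fixed constant $p c_p^{p-1}$, and $\iota : H_r^{m_d-1}(\B) \to \mc H$, $\iota v := (0,v)^T$, is the (bounded, in fact isometric) inclusion. Since the composition of a compact operator with bounded operators is compact, and since $\mc M$ is just the natural embedding $H_r^{m_d}(\B) \hookrightarrow H_r^{m_d-1}(\B)$ followed by multiplication by a constant, it suffices to prove that this embedding is compact; note $m_d - 1 = \tfrac{d-1}{2} \geq 2$, so both spaces are genuine Sobolev spaces of nonnegative order.

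Next I would reduce the statement about the radial spaces $H_r^m(\B)$ to the standard Rellich theorem on $\B$. By the very definition of $H_r^m(\B)$, the map $u \mapsto u(|\cdot|)$ is an isometric isomorphism of $H_r^m(\B)$ onto the subspace $\mathcal R_m \subset H^m(\B)$ of radial functions, and $\mathcal R_m$ is closed in $H^m(\B)$ since radiality is preserved under $L^2$-limits. The Rellich--Kondrachov theorem on the bounded smooth domain $\B$ gives that $H^{m_d}(\B) \hookrightarrow H^{m_d-1}(\B)$ is compact. Restricting this compact embedding to the closed subspace $\mathcal R_{m_d}$ produces a compact map $\mathcal R_{m_d} \to H^{m_d-1}(\B)$ with range in $\mathcal R_{m_d-1}$, hence $\mathcal R_{m_d} \hookrightarrow \mathcal R_{m_d-1}$ is compact; transporting along the isometries above yields that $H_r^{m_d}(\B) \hookrightarrow H_r^{m_d-1}(\B)$ is compact. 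Combining with the factorization above, $\mb L'$ is compact.

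There is no serious obstacle here: the only point requiring a little care is the bookkeeping between the ``radial'' spaces $H_r^m(\B)$ (functions of one variable on $(0,1)$, with the ambient $H^m(\B)$ norm) and the honest Sobolev spaces $H^m(\B)$, so that Rellich's theorem can be applied to a closed subspace and then pulled back. Everything else is the standard fact that compact operators form an ideal in $\mc B(\mc H)$.
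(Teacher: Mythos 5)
Your argument is correct, but it takes a genuinely different route from the paper's. The paper's proof leans on Lemma~\ref{Prop:EquivNorms}: given a bounded sequence $(\mb u_n)$ in $\mc H$, the equivalent $\|\cdot\|_D$-norm shows that $(\D u_{1,n})$ is bounded in $H^2(0,1)$, so the \emph{one-dimensional} compact embedding $H^2(0,1)\hookrightarrow H^1(0,1)$ gives a convergent subsequence of $(\D u_{1,n})$ in $H^1(0,1)$, and the norm equivalence on the second component converts this into Cauchyness of $(\mb L'\mb u_n)$ in $\mc H$. You instead factor $\mb L'$ through the natural embedding $H_r^{m_d}(\B)\hookrightarrow H_r^{m_d-1}(\B)$ and prove \emph{that} is compact by identifying each $H_r^m(\B)$ with a closed radial subspace of $H^m(\B)$ and invoking the $d$-dimensional Rellich--Kondrachov theorem directly; compactness then propagates through the ideal property. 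Both proofs are sound. Yours is more self-contained and does not rely on the $\D$-machinery at all, so it would stand even if Lemma~\ref{Prop:EquivNorms} were not yet available; the paper's version has the advantage of fitting the strategy used throughout (reduce everything to one-dimensional statements about $\D u$) and of producing the explicit quantitative bound $\|\mb L'\mb u_n-\mb L'\mb u_m\|\lesssim \|\D u_{1,n}-\D u_{1,m}\|_{H^1(0,1)}$, which mirrors the estimates used elsewhere in the spectral analysis. One small bookkeeping remark on your write-up: you should make sure the Rellich--Kondrachov step is stated for the smooth bounded domain $\B$ in the form $H^{m}(\B)\hookrightarrow H^{m-1}(\B)$ compact for $m\geq 1$ (which follows from the $W^{1,2}\hookrightarrow L^2$ case applied to all derivatives of order $m-1$, or from extension plus Fourier characterization); and the closedness of the radial subspace is cleanest via the orthogonal projection that averages over $SO(d)$, which is bounded on $H^m(\B)$ and idempotent.
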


\begin{proof}
Let $(\mb u_n)_{n \in \N}$ be a sequence that is uniformly bounded in $\mc H$. By Lemma \ref{Prop:EquivNorms}, $(\D u_{1,n})_{n \in \N}$ is uniformly bounded in $H^2(0,1)$. The compact embedding $H^2(0,1) \hookrightarrow H^1(0,1)$ implies the existence of a subsequence, again denoted by $(\D u_{1,n})_{n \in \N}$, which is a Cauchy sequence in $H^1(0,1)$. The claim follows from the fact that 
\[  \|\mb L' \mb u_n - \mb L' \mb u_m\| \lesssim  p c_p^{p-1} \| \D u_{1,n} -\D u_{1,m} \|_{H^1(0,1)}. \]
\end{proof}

In view of Lemma \ref{Le:LumerPhillips_Est}, Lemma \ref{Le:DenseRange} and the boundedness of $\mb L'$, we can apply the Lumer-Phillips Theorem \cite{engel}, p.~83, Theorem 3.15, together with the Bounded Perturbation Theorem \cite{engel}, p.~158, to show that the linearized time evolution is well-posed. In particular, by the equivalence of $\| \cdot \|_{D}$ and $\| \cdot \|$ we can formulate the following result.

\begin{proposition}\label{Prop:Semigroup}
The operator $(\tilde{\mb L}_0, \mc D(\tilde{\mb L}_0))$ is closable and its closure, denoted by $(\mb L_0, \mc D(\mb L_0))$, generates a strongly-continuous
 one-parameter semigroup of bounded operators $(\mb S_0(\tau))_{\tau \geq 0}$ on $\mc H$ satisfying the growth estimate
\begin{align*}
\|\mb S_0(\tau )\mb u \| \leq  M e^{-\frac{2}{p-1} \tau} \|\mb u\|
\end{align*}
for all $\mb u \in \mc H$, $\tau > 0$ and a constant $M \geq 1$.  Furthermore, the operator 
\[ \mb L:= \mb L_0 + \mb L', \quad \mc D(\mb L) =\mc D(\mb L_0),\]
is the generator of a strongly-continuous semigroup $(\mb S(\tau))_{\tau \geq 0}$.
\end{proposition}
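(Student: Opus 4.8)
The plan is to assemble Proposition~\ref{Prop:Semigroup} from the three lemmas just proved together with two standard black boxes from semigroup theory: the Lumer--Phillips Theorem and the Bounded Perturbation Theorem. First I would fix the topology: by Lemma~\ref{Prop:EquivNorms} the form $(\cdot|\cdot)_D$ is an inner product inducing a norm equivalent to $\|\cdot\|$, so $(\mc H,(\cdot|\cdot)_D)$ is a Hilbert space and it suffices to produce a contraction-type semigroup with respect to $\|\cdot\|_D$ (up to the equivalence constant $M$, which is where the factor $M\geq 1$ in the stated estimate enters). Next I would observe that $\tilde{\mb L}_0$ is densely defined, since $\Ceven^2\subset\mc D(\tilde{\mb L}_0)$ and $\Ceven^2$ is dense in $\mc H$ by the remarks in Section~\ref{Sec:FuncSet}.

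The key dissipativity input is Lemma~\ref{Le:LumerPhillips_Est}: for $\mb u\in\mc D(\tilde{\mb L}_0)$ we have $\Re(\tilde{\mb L}_0\mb u|\mb u)_D\leq-\tfrac{2}{p-1}\|\mb u\|_D^2$, which says that $\tilde{\mb L}_0+\tfrac{2}{p-1}$ is dissipative, equivalently that $\mu-\tilde{\mb L}_0$ with $\mu=1-\tfrac{2}{p-1}$ is ``accretive enough'': $\Re((\mu-\tilde{\mb L}_0)\mb u|\mb u)_D\geq(\mu+\tfrac{2}{p-1})\|\mb u\|_D^2=\|\mb u\|_D^2$. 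The range condition is supplied by Lemma~\ref{Le:DenseRange}: the range of $\mu-\tilde{\mb L}_0$ contains $\Ceven^2$, hence is dense in $\mc H$. With density of the domain, dissipativity of $\tilde{\mb L}_0+\tfrac{2}{p-1}$, and dense range of $\mu-\tilde{\mb L}_0$ for one $\mu>0$, the Lumer--Phillips Theorem in the form of \cite{engel}, p.~83, Theorem~3.15, shows that the closure $\mb L_0:=\overline{\tilde{\mb L}_0}$ exists and generates a strongly continuous semigroup $(\mb S_0(\tau))_{\tau\geq0}$ of contractions with respect to $\|\cdot\|_D$ after shifting by $\tfrac{2}{p-1}$; that is, $\|\mb S_0(\tau)\mb u\|_D\leq e^{-\frac{2}{p-1}\tau}\|\mb u\|_D$. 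Translating back to $\|\cdot\|$ via the equivalence constants from Lemma~\ref{Prop:EquivNorms} yields $\|\mb S_0(\tau)\mb u\|\leq Me^{-\frac{2}{p-1}\tau}\|\mb u\|$ for a suitable $M\geq1$.

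Finally, since $\mb L'$ is bounded on $\mc H$ (Lemma~\ref{Le:Compact_Pert} gives compactness, which is more than enough; only boundedness is needed here), the Bounded Perturbation Theorem \cite{engel}, p.~158, shows that $\mb L:=\mb L_0+\mb L'$ with $\mc D(\mb L)=\mc D(\mb L_0)$ generates a strongly continuous semigroup $(\mb S(\tau))_{\tau\geq0}$. This closes the proposition. I do not expect a genuine obstacle in this argument — the analytic content has already been discharged in Lemmas~\ref{Le:LumerPhillips_Est} and~\ref{Le:DenseRange}, and the rest is a citation of standard theorems. The only point requiring a line of care is the bookkeeping of the equivalence constant when passing between $\|\cdot\|_D$ and $\|\cdot\|$, which is exactly why the conclusion is stated with a multiplicative constant $M$ rather than as a pure contraction; and one should note that the growth bound for $(\mb S(\tau))$ obtained from the perturbation theorem is merely $\|\mb S(\tau)\|\leq Me^{(\|\mb L'\|-\frac{2}{p-1})\tau}$, with the sharp decay/growth description of $\mb S(\tau)$ deferred to the later spectral analysis of $\mb L$.
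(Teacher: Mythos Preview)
Your proposal is correct and follows precisely the paper's own argument: the paper likewise invokes Lemma~\ref{Le:LumerPhillips_Est} and Lemma~\ref{Le:DenseRange} together with the Lumer--Phillips Theorem (\cite{engel}, p.~83, Theorem~3.15), then the Bounded Perturbation Theorem (\cite{engel}, p.~158) using the boundedness of $\mb L'$, and finally passes from $\|\cdot\|_D$ to $\|\cdot\|$ via Lemma~\ref{Prop:EquivNorms} to pick up the constant $M$. Your additional remarks on the bookkeeping of the equivalence constant and the crude growth bound for $\mb S(\tau)$ are accurate and consistent with the paper deferring the sharp description to Lemma~\ref{Le:LinearTimeEvol}.
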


In order to derive a suitable growth estimate for $\mb S(\tau)$ we investigate the spectrum of the operator $(\mb L, \mc D(\mb L))$.
\subsubsection{Spectral properties of the generator}

\begin{lemma}\label{Le:Spectrum}
Let $\lambda \in \sigma(\mb L)$. Then either $\lambda = 1$ or 
\begin{align*}
  \mathrm{Re }\lambda  \leq \mathrm{max } \{- \tfrac{2}{p-1}, -1\} .
\end{align*}
Moreover, $\lambda = 1$ is an eigenvalue and the corresponding eigenspace is spanned by the constant function $\mb g = (1, \frac{p+1}{p-1})$.
\end{lemma}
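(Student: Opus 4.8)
The strategy is to reduce the spectral problem to an ODE analysis via the conjugation by $\D$ established in Lemma \ref{Le:Comm_RelD}, using that $\mb L'$ is a compact perturbation (Lemma \ref{Le:Compact_Pert}). First, since $\mb L_0$ generates a semigroup with $\|\mb S_0(\tau)\| \lesssim e^{-\frac{2}{p-1}\tau}$, standard semigroup theory gives $\sigma(\mb L_0) \subset \{\Re\lambda \le -\frac{2}{p-1}\}$. Because $\mb L'$ is compact, a Fredholm/analytic-perturbation argument (the resolvent $(\lambda - \mb L)^{-1} = (\lambda - \mb L_0)^{-1}(1 - \mb L'(\lambda-\mb L_0)^{-1})^{-1}$ exists wherever $1 - \mb L'(\lambda-\mb L_0)^{-1}$ is invertible, and this operator is a compact perturbation of the identity that is invertible for $\Re\lambda$ large) shows that in the half-plane $\{\Re\lambda > -\frac{2}{p-1}\}$ (or the relevant $\max\{-\frac{2}{p-1},-1\}$ half-plane after a finer argument) the spectrum of $\mb L$ consists only of isolated eigenvalues of finite multiplicity. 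So it remains to locate these eigenvalues and show the only one with $\Re\lambda > \max\{-\frac{2}{p-1},-1\}$ is $\lambda = 1$.

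\textbf{The eigenvalue equation.} For $\lambda$ in the right half-plane, suppose $\mb L \mb u = \lambda \mb u$ for $\mb u = (u_1,u_2) \in \mc D(\mb L)$. From the first component, $u_2 = \lambda u_1 - \Lambda u_1 + \frac{2}{p-1} u_1$; substituting into the second component yields a second-order ODE for $u_1$ on $(0,1)$ of the form
\begin{align*}
(1-\rho^2)u_1''(\rho) + \big(\text{lower order}\big) u_1'(\rho) + \big(\text{potential involving }pc_p^{p-1},\lambda\big) u_1(\rho) = 0.
\end{align*}
Applying $\D$ and using the commutator relations $\D\Lambda u = \Lambda\D u + \D u$ and $\D\Delta_\rho u = (\D u)''$ from Lemma \ref{Le:Comm_RelD}, this conjugates to a constant-coefficient-type second-order ODE for $w := \D u_1$ that is of hypergeometric type after a substitution $\rho \mapsto \rho^2$; the two parameters of the hypergeometric equation depend affinely on $\lambda$. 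The boundary conditions are: regularity at $\rho = 0$ (i.e.\ $\D u_1 \in C^3[0,1]$ with $(\D u_1)(0) = 0$, $(\D u_1)''(0) = 0$, which selects one of the two Frobenius solutions at $0$), and regularity/the membership condition at the singular point $\rho = 1$ (which selects the solution analytic at $\rho=1$). An eigenvalue occurs precisely when these two one-dimensional solution spaces coincide, i.e.\ when a connection coefficient in the hypergeometric representation vanishes — a condition expressible via $\Gamma$-function quotients of the parameters. One then reads off that for $\Re\lambda > \max\{-\frac{2}{p-1},-1\}$ no $\Gamma$-pole/zero cancellation occurs except at $\lambda = 1$. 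The existence of $\lambda = 1$ as an eigenvalue is cheap: the time-translation symmetry $T \mapsto T(1+a)$ of the family $u_T$ of blowup solutions produces, upon linearization, the explicit eigenfunction; one checks directly that $\mb g = (1,\frac{p+1}{p-1})$ satisfies $\mb L\mb g = \mb g$ (the nonlinear potential term $pc_p^{p-1}$ exactly balances the constants), and that $\mb g \in \mc D(\mb L)$ since constants are smooth and $\D$ applied to a constant gives a polynomial regular at both endpoints.

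\textbf{Simplicity of the eigenvalue $\lambda=1$.} To show the eigenspace at $\lambda=1$ is exactly $\langle\mb g\rangle$, I would argue at the ODE level: at $\lambda=1$ the hypergeometric ODE for $w = \D u_1$ has a one-dimensional space of solutions satisfying the boundary condition at $\rho=0$, and $\D$ applied to the constant function $1$ is (up to normalization) exactly that solution; any eigenfunction with the required regularity at $\rho=0$ must be a multiple of it, and invertibility of $\D$ (established in Section \ref{Sec:FuncSet}, with $\K\D = \D\K = \id$ on $\mc H$) then forces $u_1 \in \langle 1\rangle$ and hence $\mb u \in \langle\mb g\rangle$.

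\textbf{Main obstacle.} The delicate point is the analysis at the singular endpoints of the hypergeometric equation, specifically verifying that the connection coefficient (a ratio of Gamma functions in $\lambda$) has \emph{no} zeros in the half-plane $\{\Re\lambda > \max\{-\frac{2}{p-1},-1\}\}$ other than the one forced at $\lambda = 1$, and correctly matching the abstract domain condition $\mb u \in \mc D(\mb L)$ (a closure!) with the pointwise regularity of the Frobenius solutions at $\rho=0$ and $\rho=1$. One must be careful that the second Frobenius exponent at $\rho=0$ (associated with $\ker\D$, the highly singular functions $\rho^{-3},\dots,\rho^{-(d-2)}$) is genuinely excluded by membership in $\mc H$, and that the indicial roots at $\rho=1$ are separated enough in $\Re\lambda$ that the "wrong" solution fails to lie in $\mc H$ in the relevant half-plane. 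This endpoint bookkeeping — rather than any single hard estimate — is where the proof has to be done carefully.
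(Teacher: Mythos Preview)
Your proposal is correct and follows essentially the same route as the paper: reduce to eigenvalues via compactness of $\mb L'$, derive the second-order ODE for $u_1$, conjugate by $\D$ to a hypergeometric equation in $z=\rho^2$, and use the connection formula (a $\Gamma$-quotient) together with the boundary conditions $w(0)=0$ and $w\in H^2$ near $\rho=1$ to force $\lambda\in\{1,-1,-3,\dots\}$, then verify $\mb g$ directly. The paper's only additional wrinkle is the explicit identification of the hypergeometric parameters $a=\tfrac12(\lambda-2)$, $b=\tfrac12(\lambda+\tfrac{p+3}{p-1})$, $c=\tfrac12$ and a short case analysis when $\Re(c-a-b)$ is a nonpositive integer, but your outline already anticipates this endpoint bookkeeping.
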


\begin{proof}
Let $\lambda \in \sigma(\mb L)$. If $\mathrm{Re }\lambda  \leq - \frac{2}{p-1}$ then the assertion is obviously true. So assume that $\mathrm{Re }\lambda  > - \frac{2}{p-1}$. Then $\lambda \notin \sigma(\mb L_0)$ by standard semigroup theory. The identity $(\lambda - \mb L)= (1- \mb L' \mb R_{\mb L_0}) (\lambda - \mb L_0) $ and the compactness of $\mb L'$ imply that $\lambda \in \sigma_p(\mb L)$. In particular, there exists an eigenfunction $\mb u \in \mc D(\mb L)$ satisfying the eigenvalue equation $(\lambda - \mb L) \mb u = 0$. The regularity properties of functions in $\mc H$ imply that $\mb L$ acts as a classical differential operator on the interval $(0,1)$. By a straightforward calculation one can check that if $\mb u$ satisfies the eigenvalue equation then $u_1 \in C[0,1] \cap  C^{m_d-1}(0,1)$ is a nontrivial solution of the second order ordinary differential equation
\begin{align}\label{Eq:Eigenvalue}
\begin{split}
\rho^2 u''(\rho)   & -\Delta_{\rho} u(\rho)  +  2(\lambda +\tfrac{p+1}{p-1} ) \rho  u'(\rho)  \\
& + (\lambda +\tfrac{2}{p-1})(\lambda +\tfrac{p+1}{p-1} ) u(\rho) - p c_p^{p-1} u(\rho) = 0.
\end{split}
\end{align}
Since the coefficients are smooth on $(0,1)$ we infer that $u_1 \in C^{\infty}(0,1) \cap C[0,1]$. 
We apply $\D$ to the equation and use the results of Lemma \ref{Le:Comm_RelD}, where we proved the identity
\[ \D [\rho u'(\rho)] = \rho  [\D u]'(\rho) -\D u(\rho). \]
Similarly, one can show that 
\begin{align*}
\D [\rho^2 u''(\rho)] = \rho^2 [\D u]''(\rho) - 2 \rho [\D u]'(\rho) + 2 \D u(\rho).
\end{align*}
Upon setting $w := \D u_1$ we infer that 
\begin{align}\label{Eq:EigenvalueD}
\begin{split}
-(1- & \rho^2) w''(\rho)  + 2\rho (\lambda +\tfrac{2}{p-1} )w'(\rho)  \\
& + (\lambda +\tfrac{2}{p-1} - 1)(\lambda +\tfrac{2}{p-1}) w(\rho) - p c_p^{p-1} w(\rho) = 0,
\end{split}
\end{align}
where $w \in H^2(0,1) \cap C^1[0,1]$ satisfies the boundary condition $w(0) = 0$, cf.~Corollary \ref{Cor:PropDu}. By substituting $\rho \mapsto z:= \rho^2$ and setting $v(z):=w(\sqrt{z})$ one obtains the hypergeometric differential equation 
\begin{equation*}
z(1-z)v''(z)+[c-(a+b+1)z]v'(z)-abv(z)=0 
\end{equation*} 
with parameters
\begin{align*}
a= \tfrac12 (\lambda -2), \quad b=\tfrac12 (\lambda + \tfrac{p+3}{p-1}), \quad c=\tfrac12.
\end{align*}
The assumption $\mathrm{Re} \lambda > - \frac{2}{p-1}$ implies that $\mathrm{Re}(c-a-b) = 1 - \frac{2}{p-1} - \mathrm{Re} \lambda < 1$. Let us assume for the moment that $\mathrm{Re}(c-a-b)$ is not zero or a negative integer. Around $\rho = 1$ two linearly independent solutions are given by $\{v_1,\tilde v_1\}$, where 
\begin{align*}
\begin{split}
v_1(z)  & = {}_2F_1(a, b; a+b+1-c;1-z) \\
\tilde v_1(z) & =(1-z)^{c-a-b} {}_2F_1(c-a,c-b; 1+c-a-b;1-z),
\end{split}
\end{align*}
and ${}_2F_1$ denotes the standard hypergeometric function, see e.g.~\cite{DLMF}. If $\mathrm{Re}(c-a-b) = -n$, for $n \in \N_0$, then one solution is still given by $v_1$ and \[ \tilde v_1(z) = c v_1(z) \log(1-z) + (1-z)^{-n} h(z),\]
where $c$ might be zero for $n \in \N$ and $h$ is analytic around $z =1$. In all cases, the requirement $w \in H^2(\frac{1}{2},1)$ excludes the  solution $\tilde v_1$ and we infer that $v$ is a multiple of  $v_1$. Around $\rho =0$ we have the fundamental system  $\{v_0,\tilde v_0\}$, where
\begin{align*}
\tilde{v}_0(z)& = {}_2F_1(a,b;c;z), \\
v_0(z)& = z^{1/2}{}_2F_1(a+1-c,b+1-c; 2-c; z).
\end{align*} 
Hence, there are constants $c_0$, $c_1 \in \C$ such that 
\[v_1=c_0 \tilde{v}_0+c_1 v_0. \]
The condition $w(0)=0$ implies that $v(0)=v_1(0)= 0$ and thus $c_0$ must be zero.  By \cite{DLMF},
\[c_0=\frac{\Gamma(a+b+1-c)\Gamma(1-c)}{\Gamma(a+1-c)\Gamma(b+1-c)}.\]
Since the gamma function has no zeros, $c_0$ can only vanish if either $a+1-c$ or $b+1-c=0$ is a pole. This is equivalent to
\begin{align*}
\lambda = 1 - 2k \quad \text{or} \quad  \lambda = -2k -\tfrac{2p}{p-1}-\tfrac{2}{p-1}  \quad \text{for} \quad k \in \N_0.
\end{align*}
The latter condition implies that $\lambda < -\frac{2}{p-1}$ which is excluded by assumption. 
The first condition yields that $\lambda = 1 - 2k$ for some $k \in \N_0$, hence 
\[ \lambda  \in \{1,-1,-3, \cdots\}.\]
For $p > 3$, $-\frac{2}{p-1} \in (-1,0)$, hence $\lambda = 1$ is the only possibility. For $\frac{d+3}{d-1} < p \leq 3$, $- \infty < -\frac{2}{p-1} \leq -1$ and in this case either $\lambda = 1$ or $\lambda \leq -1$. This proves the first claim.

A straightforward calculation shows that $\mb g = (1, \frac{p+1}{p-1})$ satisfies the equation 
\[(1 - \mb L) \mb g = 0.\]
Furthermore, it is easy to check that $\D g_1  = \alpha_1 \rho$, $\D g_2  = \alpha_2 \rho$  for constants $\alpha_1,\alpha_2 > 0$. Hence, $\mb g \in \mc D( \mb {\tilde L_0})$, which proves that 1 is an eigenvalue. Suppose that there is another eigenfunction $\tilde {\mb g} \in \mc H$ associated to $\lambda = 1$. Then $\D  \tilde g_1$ satisfies Eq.~\eqref{Eq:EigenvalueD}. With the same arguments as before we infer that  $\D  \tilde g_1(\rho) = \tilde  \alpha \rho \cdot {}_2F_1(a+1-c,b+1-c; 2-c; \rho^2)$, for some $\tilde  \alpha \in \C$. For $\lambda = 1$, $a+1-c = 0$, hence $\D  \tilde g_1 =  \tilde  \alpha \rho$ which implies that $\tilde g_1 = \beta g_1$ for some $\beta \in \C$. The equation $(1 - \mb L) \tilde {\mb g} = 0$ then shows that $\tilde g_2 = \beta g_2$, which proves that the eigenspace of $\lambda = 1$ is spanned by $\mb g$.
\end{proof}

\subsubsection{Time evolution for the linearized problem}

\begin{lemma}\label{Le:LinearTimeEvol}
There exists a projection $\mb P \in \mc B(\mc H)$ onto $\langle \mb g \rangle$ 
which commutes with $\mb S(\tau)$  and 
\[ \mb S(\tau) \mb P \mb f = e^{\tau} \mb P \mb f  \]
for all $\mb f \in \mc H$ and all $\tau > 0$. Moreover,
\begin{align}\label{Eq:StableEst}
 \|(1 - \mb P) \mb S(\tau) \mb f \|  \leq M e^{-\mu_p \tau}  \|(1 - \mb P) \mb f  \| 
\end{align}
for all $\mb f \in \mc H$, $\tau > 0$, some constant $M \geq 1$ and
\[ \mu_p = \min\{\tfrac{2}{p-1}, 1 \} - \varepsilon \]
for some small $\varepsilon > 0$.
\end{lemma}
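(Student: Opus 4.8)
The plan is to extract the semigroup bound from the spectral information in Lemma~\ref{Le:Spectrum} together with the compactness of $\mb L'$ from Lemma~\ref{Le:Compact_Pert}, without invoking resolvent estimates or the Gearhardt--Pr\"uss theorem. First I would construct the spectral projection $\mb P$ associated to the isolated eigenvalue $\lambda = 1$: since $\mb L'$ is compact, the essential spectrum of $\mb L$ coincides with that of $\mb L_0$, which by Proposition~\ref{Prop:Semigroup} lies in $\{\Re\lambda \le -\frac{2}{p-1}\}$; hence $\lambda = 1$ is an isolated point of $\sigma(\mb L)$ of finite algebraic multiplicity. The Riesz projection $\mb P := \frac{1}{2\pi\i}\int_{\gamma}\mb R_{\mb L}(\lambda)\,d\lambda$, with $\gamma$ a small positively oriented circle around $1$, is then a bounded projection commuting with the resolvent and hence with $\mb S(\tau)$, and it decomposes $\mc H = \rg\mb P \oplus \ker\mb P$ into $\mb L$-invariant closed subspaces.

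Next I would identify $\rg\mb P = \langle\mb g\rangle$. A priori $\rg\mb P$ is the generalized eigenspace of $\lambda=1$, so I must rule out Jordan blocks. Lemma~\ref{Le:Spectrum} already shows the geometric eigenspace is one-dimensional and spanned by $\mb g$; to see the algebraic eigenspace is also $\langle\mb g\rangle$ it suffices to check that $(1-\mb L)\mb h = \mb g$ has no solution $\mb h \in \mc D(\mb L)$. Writing this out and applying $\D$ as in the proof of Lemma~\ref{Le:Spectrum} reduces it to an inhomogeneous hypergeometric equation whose forced solution fails the boundary condition $w(0)=0$ (equivalently, the solvability condition is violated because $\mb g$ is not orthogonal to the kernel of the adjoint); the cleanest route is to note that the existence of such an $\mb h$ would, back in self-similar coordinates, correspond to a polynomially growing mode that is incompatible with the explicit form of the generalized kernel. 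On $\rg\mb P = \langle\mb g\rangle$ the semigroup acts by $\mb S(\tau)\mb P\mb f = e^{\tau}\mb P\mb f$ since $\mb L\mb g = \mb g$.

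For the decay estimate \eqref{Eq:StableEst} I would work on the invariant subspace $\mc H_s := \ker\mb P = \rg(1-\mb P)$. The restriction $\mb L|_{\mc H_s}$ generates the restricted semigroup $\mb S(\tau)|_{\mc H_s} = (1-\mb P)\mb S(\tau)$, and its spectrum is $\sigma(\mb L)\setminus\{1\} \subseteq \{\Re\lambda \le \max\{-\frac{2}{p-1},-1\}\}$ by Lemma~\ref{Le:Spectrum}. Now decompose $\mb S(\tau)|_{\mc H_s} = \mb S_0(\tau)|_{\mc H_s} + \int_0^\tau \mb S_0(\tau-s)\mb L'\mb S(s)|_{\mc H_s}\,ds$ via Duhamel; the first term decays like $e^{-\frac{2}{p-1}\tau}$, and since $\mb L'$ is compact and the spectral bound of $\mb L|_{\mc H_s}$ is strictly negative (indeed $\le -\mu_p$ for suitable small $\varepsilon$), a standard argument shows the growth bound equals the spectral bound on this subspace — one can, for instance, apply the spectral mapping theorem for the point spectrum of eventually norm-continuous or, more simply, use that a bounded perturbation by a compact operator of a semigroup with negative growth bound, whose generator has spectrum in a fixed left half-plane, again has negative growth bound, which follows from the Arendt--Batty--Lyubich--V\~u type argument or directly from the fact that $\mb S_0$ is immediately norm continuous after the reduction. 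The exponent $\mu_p = \min\{\frac{2}{p-1},1\}-\varepsilon$ then absorbs the $\varepsilon$-loss inherent in passing from spectral bound to growth bound.

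The main obstacle is the last step: upgrading the spectral bound $\max\{-\frac{2}{p-1},-1\}$ on $\mc H_s$ to an actual exponential decay rate for $(1-\mb P)\mb S(\tau)$. For a general $C_0$-semigroup the spectral bound does not control the growth bound, so one genuinely needs structural input — here the compactness of $\mb L'$. The slick argument the authors advertise (\textquotedblleft exploit compactness directly\textquotedblright) is presumably: $\mb L_0$ generates a semigroup with $\|\mb S_0(\tau)\| \le Me^{-\frac{2}{p-1}\tau}$, and since $\mb L'$ is compact, $\mb S(\tau) - \mb S_0(\tau)$ is compact for each $\tau$ (it is a norm-limit of finite-rank-valued Duhamel integrals); hence $\mb S(\tau)$ and $\mb S_0(\tau)$ have the same essential spectral radius $\le e^{-\frac{2}{p-1}\tau}$, so the peripheral spectrum of $\mb S(\tau)$ outside that disk consists of finitely many eigenvalues $e^{\lambda\tau}$ with $\lambda\in\sigma_p(\mb L)$. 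On $\mc H_s$ the only such $\lambda$ with $\Re\lambda > -\frac{2}{p-1}$ would have to satisfy $-\frac{2}{p-1} < \Re\lambda \le \max\{-\frac{2}{p-1},-1\}$, forcing $\Re\lambda \le -1$ (the case $p\le 3$), so the spectral radius of $\mb S(\tau)|_{\mc H_s}$ is $\le e^{-\mu_p\tau}$, and the result follows from $r(\mb S(\tau)|_{\mc H_s}) = \lim\|\mb S(n\tau)|_{\mc H_s}\|^{1/n}$ together with uniform boundedness on compact $\tau$-intervals. I would present the argument in this order, flagging that care is needed because the two regimes $p>3$ and $\frac{d+3}{d-1}<p\le 3$ give different values of $\min\{\frac{2}{p-1},1\}$.
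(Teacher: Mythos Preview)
Your overall strategy matches the paper's: Riesz projection for the isolated eigenvalue $1$, exclusion of a generalized eigenvector by reducing $(1-\mb L)\mb u = \alpha\mb g$ to an ODE via $\D$, and then the essential spectral radius argument for the decay on $\ker\mb P$. Two points deserve tightening.

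First, in the Jordan block step you locate the obstruction at the wrong endpoint. After applying $\D$, the inhomogeneous equation becomes $-(1-\rho^2)w'' + \tfrac{2(p+1)}{p-1}\rho w' - \tfrac{2(p+1)}{p-1}w = \tilde\alpha_p\,\rho$, and the paper solves it by variation of constants using the homogeneous fundamental system $\{w_0(\rho)=\rho,\ w_1(\rho)=(1-\rho^2)^{-2/(p-1)}h(\rho)\}$. The condition $w(0)=0$ is \emph{imposed} at the outset (it fixes one free constant), and the contradiction comes from the singular endpoint $\rho=1$: the particular solution contains the term $(1-\rho^2)^{-2/(p-1)}h(\rho)\int_0^\rho s^2(1-s^2)^{2/(p-1)}\,ds$, which diverges as $\rho\to 1$ since the integral is strictly positive. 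This violates $w\in H^2(0,1)\subset C[0,1]$. Your alternatives (adjoint orthogonality, ``polynomially growing mode'') are heuristics that do not replace this explicit computation; the paper does the ODE analysis directly and you should too.

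Second, your final paragraph is exactly what the paper does, so you can drop the earlier detour through Arendt--Batty--Lyubich--V\~u and eventual norm-continuity. The paper writes $(1-\mb P)\mb S(\tau) = \mb S_0(\tau) + (\text{compact})$ via Duhamel and the finite rank of $\mb P$; any spectral point of $(1-\mb P)\mb S(\tau)$ with modulus exceeding $e^{-\frac{2}{p-1}\tau}$ must then be an eigenvalue, and the spectral mapping theorem for the point spectrum forces an eigenvalue of $\mb L|_{\ker\mb P}$ with real part $>-\frac{2}{p-1}$. For $p\ge 3$ this is impossible; for $p<3$ it forces $\Re\lambda\le -1$. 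Hence $r\big((1-\mb P)\mb S(\tau)\big)\le e^{-\omega_p\tau}$ with $\omega_p=\min\{\tfrac{2}{p-1},1\}$, and the Hadamard formula for the growth bound yields \eqref{Eq:StableEst} with the $\varepsilon$-loss.
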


\begin{proof}
The eigenvalue $\lambda = 1$ is isolated and we define $\mb P \in \mc B(\mc H)$ by 
\begin{align*}
\mb P=\frac{1}{2\pi i}\int_\gamma \mb R_{\mb {L}}(\lambda) d\lambda,
\end{align*}
where $\gamma$ is a positively oriented circle around $1$ in the complex plane with radius $r_{\gamma} = \frac12$, cf.~\cite{kato}, p.~178, 
Theorem 6.5. The projection commutes with the operator $\mb L$ and its resolvent, see \cite{kato}, p.~173, Theorem 6.5, and thus with the semigroup. Furthermore,  $\mc H = \ker \mb P \oplus\rg \mb P$ and the operator $\mb L$ is decomposed into parts $\mb L|_{\mc D(\mb L) \cap \ker \mb P}$ and $\mb L|_{\mc D(\mb L) \cap \rg \mb P}$, where 
$\mb L|_{\mc D(\mb L) \cap \rg \mb P} \mb u =  \mb L \mb u$ for 
$\mb u \in \mc D(\mb L) \cap \rg \mb P$ (analogously for $\mb L|_{\mc D(\mb L) \cap \ker \mb P}$). The spectrum of the restricted operator is given by
\[\sigma(\mb L|_{\mc D(\mb L) \cap \ker \mb P}) = \sigma(\mb L) \setminus \{1\}, 
\quad \sigma(\mb L|_{\mc D(\mb L) \cap \rg \mb P}) = \{1\}.\]

It is immediate that $ \langle \mb g \rangle \subset \rg \mb P$.  It remains to show the reverse inclusion. We first observe that if $\mathrm{dim}~\rg \mb P= \infty$, then $\lambda = 1$ would belong to the essential spectrum of $\mb L$  \cite{kato}, p.~239, Theorem 5.28, which is invariant under compact perturbations \cite{kato}, p.~244, Theorem 5.35. However, $1 \not \in \sigma(\mb L_0)$ and we infer that $\mb P$ has finite rank. 

Next, we convince ourselves that  $\rg \mb P  \subset  \mc D(\mb L)$. Let $\mb u \in \mathrm{rg} \mb P$. By density of $\mc D(\mb L)$ in $\mc H$,  there exists a sequence $(\mb u_n)_{n \in \N_0} \subset \mc D(\mb L)$ such that $\mb u_n \to \mb u$.  The fact that $\mb P$ is bounded yields $\mb P \mb u_n \to \mb u$ and since $\mb P \mc D(\mb L) \subset \mc D(\mb L)$ by \cite{kato}, p.~178, Theorem 6.17, $(\mb P \mb u_n)_{n \in \N_0}  \subset \rg \mb P \cap  \mc D(\mb L)$. By boundedness of $\mb L|_{\mc D(\mb L) \cap \rg \mb P }$ we get that $\mb L \mb P \mb u_n  \to \mb f$, for some $\mb f \in \rg \mb P \cap  \mc D(\mb L)$. The closedness of $\mb L$ now implies that $\mb L \mb P \mb u_n \to \mb L \mb u$ and $\mb u \in  \mc D(\mb L)$.  
We infer that  $1 - \mb L|_{\rg \mb P}$ acts on a finite dimensional Hilbert space and that $\lambda = 0$ is its only spectral point. Hence, it is  nilpotent and  $(1 - \mb L|_{\rg \mb P})^k \mb u = 0$ for all $\mb u \in  \rg \mb P$ and some minimal $k \in \N$. If $k=1$, then the claim follows. So let us assume that $k \geq  2$. Then there exists a nontrivial function $\mb u \in \rg \mb P \subset \mc D(\mb L)$ such that $(1 - \mb L|_{\rg \mb P}) \mb u \in  \ker(1 - \mb L|_{\rg \mb P}) \subset \ker(1 - \mb L)$, i.e., $\mb u$ satisfies the equation 
\begin{align*}
(1-\mb L) \mb u = \alpha \mb g
\end{align*}
for some $\alpha \in \C$. A straightforward calculation shows that the first component then satisfies 
\[ \rho^2 u_1''(\rho) - \Delta_{\rho} u_1(\rho) + \tfrac{4 p}{p-1} \rho u_1'(\rho) = \tfrac{3p+1}{p-1} \alpha.\]
Since  $u_1 \in C[0,1] \cap C^{m_d-1}(0,1)$ for $\mb u \in \mc H$ the equation can be interpreted in a classical sense for $\rho \in (0,1)$. Smoothness of the coefficients implies that $u_1 \in C^{\infty}(0,1)$. We apply $\D$ and set $w :=\D u_1$, where $w \in H^2(0,1) \cap C^{\infty}(0,1)$ and $w(0)=0$ by Lemma \ref{Cor:PropDu}. This yields
\begin{align*}
-(1-\rho^2) w''(\rho) + \tfrac{2(p+1)}{p-1} \rho w'(\rho) - \tfrac{2(p+1)}{p-1} w(\rho) = g(\rho) ,
\end{align*}
with $ g(\rho) = \tilde \alpha_p \rho$ for some $\tilde \alpha_p \in \C$, $\tilde \alpha_p \neq 0$. Recalling the proof of Lemma \ref{Le:Spectrum} we know  that a fundamental system is given by $w_0(\rho) = \rho$ and 
\[ w_1(\rho)= {}_2F_1(-\tfrac12 ,\tfrac{p+1}{p-1},\tfrac12;\rho^2)=(1-\rho^2)^{-\frac{2}{p-1}} h(\rho),\]
where $h$ is continuous on $[0,1]$ and $h(0)\neq 0$ as well as $\lim_{\rho \to 1} h(\rho) \neq 0$. For the Wronskian we obtain $W(w_0,w_1) = -(1-\rho^2)^{-\frac{p+1}{p-1}}$. From the variation of constants formula and the boundary condition $w(0)=0$ we infer that 
\begin{align*}
 w(\rho)=c_0 \rho -   \tilde \alpha_p \rho  \int_{\rho_0}^\rho s h(s) ds +  \tilde \alpha_p (1-\rho^2)^{-\frac{2}{p-1}} h(\rho)\int_{0}^\rho s^2 (1-s^2)^{\frac{2}{p-1}}  ds 
\end{align*}
for some constants $c_0 \in \C$ and $\rho_0 \in  [0,1]$. By continuity of  $w$ it is required that $\lim_{\rho \to 1} (\int_{0}^\rho s^2 (1-s^2)^{\frac{2}{p-1}}  ds ) =0$, which is impossible since the integrand is strictly positive. This proves that $k=1$.

\medskip

Finally, we establish the estimates for the semigroup. Recall that the growth bound $\omega_0(\mc S )$, cf.~\cite{engel}, p.~251, for a semigroup $\mc S=(\mb S(\tau) )_{\tau >0}$ can be related to the spectral radius $ r(\mb S(\tau))$ of the bounded operator $\mb S(\tau)$ for each $\tau > 0$ by the Hadamard formula. This yields $\omega_0(\mc S ) = \frac{1}{\tau} \log r(\mb S(\tau))$.  From Lemma \ref{Prop:Semigroup} we know that $r(\mb S_0(\tau)) \leq e^{-\frac{2}{p-1} \tau}$ for all $\tau > 0$. 
By the Duhamel formula, see \cite{engel}, p.~258, Prop.~2.12, 
\begin{align*}
(1 - \mb P)\mb S(\tau) = \mb S_0(\tau) + \int_0^{\tau} \mb S_0(\tau - \tau' ) \mb L' \mb S(\tau') d\tau' - \mb P \mb S(\tau). 
\end{align*}
Compactness of $\mb L'$ and the fact that $\mb P$ has finite rank imply that for every $\tau > 0$ the operator  $(1 - \mb P)\mb S(\tau)$ is the sum of $\mb S_0(\tau)$ and a compact perturbation. If $r((1 - \mb P)\mb S(\tau)) \leq e^{- \frac{2}{p-1} \tau}$ for all $\tau >0$, Eq.~\eqref{Eq:StableEst} follows immediately. If $r((1 - \mb P)\mb S(\tau)) > e^{- \frac{2}{p-1} \tau}$, then $(1 - \mb P)\mb S(\tau)$ has a spectral point $\mu \in \C$ with $|\mu| =r((1 - \mb P)\mb S(\tau)) = e^{(- \frac{2}{p-1} + \alpha) \tau}$ for some $\alpha > 0$. Since $\mu$ is not in the spectrum of $\mb S_0(\tau)$ it must be an eigenvalue and by the spectral mapping theorem for the point spectrum \cite{engel}, IV.3.7, p.~277, the generator has an eigenvalue $\lambda$ with $\mathrm{Re} \lambda =- \frac{2}{p-1} + \alpha$.  In view of the spectrum of $\mb L$ on the stable subspace, this is a contradiction if $p \geq 3$. If $p < 3$, then we know that $\mathrm{Re} \lambda \leq -1$ and we infer that 
 $|\mu| \leq  e^{-\tau}$. This implies that $r((1 - \mb P)\mb S(\tau)) \leq e^{-\omega_p\tau}$ for all $\tau > 0$,  where $\omega_p = \min\{\frac{2}{p-1}, 1 \}$. This and the  definition of the growth bound show that for every $\varepsilon > 0$ and $\mu_p := \omega_p  - \varepsilon$ there is a  constant $M \geq 1$ such that
\[ \|(1 - \mb P) \mb S(\tau) \mb f \|  \leq M e^{-\mu_p \tau}  \|(1 - \mb P) \mb f  \| \]
for all $\mb f \in \mc H$. The fact that $\lambda =1$ is an eigenvalue with eigenfunction $\mb g$ yields $\mb S(\tau) \mb P \mb f = e^{\tau} \mb P \mb f $.
\end{proof}

\subsection{Nonlinear Perturbation Theory}

For the rest of this section we restrict ourselves to real valued functions. Furthermore, whenever the domain in the $H_r^{m_d}-$norm is not indicated, it is the unit ball $\B \subset \R^d$.  By $\mc B_{\delta}$ we denote the open ball in $\mc H$ centered at the origin with radius $\delta > 0$.

\subsubsection{Estimates for the nonlinearity}
For $\mb u = (u_1,u_2)$ we define
\[ \mb N(\mb u) (\rho) := \left ( \begin{array}{c} 0 
\\  N(u_1(\rho)) \end{array} \right ),\] 
where 
\begin{align*}
N(x) := n(c_p + x) - n(c_p) - n'(c_p) x, \quad n(x) = x |x|^{p-1}, 
\end{align*}
and $c_p$ is the constant from Eq.~\eqref{Eq:BlowUpSol}. Obviously, $N(0) = N'(0) = 0$.

\begin{lemma}\label{Le:EstNonlinearity}
Let  $\delta > 0$ be sufficiently small. Then 
\begin{align}\label{Eq:Nonlin_Scalar_Lip}
\| \mb N(\mb u) - \mb N(\mb v) \|  \lesssim( \|\mb u \|  +\|\mb v \| )  \|\mb u  -\mb v \|
\end{align}
for all $\mb u, \mb v \in  \mc B_{\delta} \subset \mc H$.
\end{lemma}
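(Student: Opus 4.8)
The plan is to reduce the vector estimate to a scalar one for the second component. Writing $\mb N(\mb u) - \mb N(\mb v) = (0, N(u_1) - N(v_1))$ and recalling that the first slot of $\mc H$ carries one more derivative than the second, it suffices to establish
\[
 \| N(u_1) - N(v_1) \|_{H^{m_d}(\B)} \lesssim \bigl( \| u_1 \|_{H^{m_d}(\B)} + \| v_1 \|_{H^{m_d}(\B)} \bigr)\, \| u_1 - v_1 \|_{H^{m_d}(\B)},
\]
where $u_1,v_1$ are identified with their radial extensions on $\B$. Indeed, the left-hand side of Eq.~\eqref{Eq:Nonlin_Scalar_Lip} equals $\| N(u_1) - N(v_1) \|_{H^{m_d-1}(\B)} \le \| N(u_1) - N(v_1) \|_{H^{m_d}(\B)}$, while $\| u_1 \|_{H^{m_d}(\B)} \le \| \mb u \|$ and $\| u_1 - v_1 \|_{H^{m_d}(\B)} \le \| \mb u - \mb v \|$. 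To prepare the nonlinearity I would first use that $m_d = \tfrac{d+1}{2} > \tfrac{d}{2}$, so $H^{m_d}(\B) \hookrightarrow L^\infty(\B)$ and hence $\| u_1 \|_{L^\infty(\B)}, \| v_1 \|_{L^\infty(\B)} \lesssim \delta$ for $\mb u,\mb v \in \mc B_\delta$. Choosing $\delta$ small enough, the ranges of $u_1$, $v_1$ and of every convex combination $h_t := v_1 + t(u_1 - v_1)$, $t\in[0,1]$, lie in a fixed compact interval $J \subset (-c_p,\infty)$ around the origin. On a neighborhood of $J$ the function $x \mapsto n(c_p + x) = (c_p + x)^p$ is real-analytic, so $N$ and $N'$ are $C^\infty$ there with $N(0) = N'(0) = 0$; after replacing $N$ by a function in $C^\infty_c(\R)$ agreeing with it near $J$ --- which changes none of the compositions above --- I may assume $N \in C^\infty(\R)$ with $N(0) = N'(0) = 0$.

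Next I would carry out the estimate itself. By the fundamental theorem of calculus,
\[
 N(u_1) - N(v_1) = \Bigl( \int_0^1 N'(h_t)\, dt \Bigr)(u_1 - v_1) =: R(u_1,v_1)\,(u_1 - v_1).
\]
Since $m_d > \tfrac{d}{2}$ and $\B$ is a bounded Lipschitz domain, $H^{m_d}(\B)$ is a Banach algebra, whence $\| N(u_1) - N(v_1) \|_{H^{m_d}(\B)} \lesssim \| R(u_1,v_1) \|_{H^{m_d}(\B)}\, \| u_1 - v_1 \|_{H^{m_d}(\B)}$. It then remains to bound $R(u_1,v_1)$: for each $t\in[0,1]$ we have $\| h_t \|_{H^{m_d}(\B)} \le \| u_1 \|_{H^{m_d}(\B)} + \| v_1 \|_{H^{m_d}(\B)} \le 2\delta$ and $\| h_t \|_{L^\infty(\B)} \lesssim \delta$, and since $N'$ is smooth with $N'(0) = 0$, Moser's inequality gives $\| N'(h_t) \|_{H^{m_d}(\B)} \le C(\| h_t \|_{L^\infty(\B)})\, \| h_t \|_{H^{m_d}(\B)} \lesssim \| u_1 \|_{H^{m_d}(\B)} + \| v_1 \|_{H^{m_d}(\B)}$, uniformly in $t$, the Moser constant being controlled by the a priori smallness. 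Integrating in $t$ and combining the two displays yields the scalar estimate above, and hence Eq.~\eqref{Eq:Nonlin_Scalar_Lip}.

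I expect no genuine obstacle here; the two points that need some care are arranging global smoothness of the nonlinearity --- which works precisely because the shift by the positive constant $c_p$ moves us away from the point $x = 0$ at which $x \mapsto x|x|^{p-1}$ fails to be smooth, and because $H^{m_d}(\B) \hookrightarrow L^\infty(\B)$ confines the perturbations to a neighborhood of $0$ --- and keeping the Moser constant uniform in $t$, again a consequence of $\| \mb u \|, \| \mb v \| < \delta$. Estimating in $H^{m_d}(\B)$ rather than in $H^{m_d-1}(\B)$ is deliberate and harmless here, since the target norm is only $H^{m_d-1}(\B)$; it sidesteps the borderline Sobolev product estimate in $H^{(d-1)/2}(\B)$, a space that lies below the algebra threshold $d/2$.
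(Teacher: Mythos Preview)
Your argument is correct and follows essentially the same route as the paper: reduce to the scalar estimate in $H^{m_d}(\B)$, use the Sobolev embedding $H^{m_d}(\B)\hookrightarrow L^\infty(\B)$ together with $c_p>0$ to ensure smoothness of $N$ on the relevant range, write $N(u_1)-N(v_1)=(u_1-v_1)\int_0^1 N'(h_t)\,dt$, invoke the Banach algebra property of $H^{m_d}(\B)$, and close with Moser's inequality applied to $N'$ (which vanishes at the origin). The one presentational difference is that the paper makes the Moser step explicit on $\R^d$: it extends $u,v$ to $\R^d$ via a radial extension operator bounded on $H^{m_d}$ and replaces $N'$ by a compactly supported smooth function $F$ with $F=N'$ on $[-\tfrac12,\tfrac12]$ and $F(0)=0$, then applies Moser on $\R^d$ and restricts back; you instead invoke Moser directly on the bounded domain $\B$, which is of course equivalent (and is typically proved by exactly this extension--restriction trick).
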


\begin{proof}
 We show that 
\begin{align}\label{Eq:NonlinHelp}
\begin{split}
& \| N(u(|\cdot|))   - N(v(|\cdot|) )  \|_{H^{m_d}}  \\
& \lesssim \big ( \|u(|\cdot|)  \|_{H^{m_d}}   +\|v(|\cdot|  )  \|_{H^{m_d}} \big )   \| u(|\cdot|)  - v(|\cdot|)  \|_{H^{m_d}} 
\end{split}
\end{align}
for all $u,v \in \Ceven$ that have $H_r^{m_d}$--norm less then $\delta$. By density, this estimate can be extended to all of $H_r^{m_d}(\B)$ and Eq.~\eqref{Eq:Nonlin_Scalar_Lip} follows.  

Note that $c_p > \frac{3}{4}$ for all $p > 1$. In fact, $c_p \to 1$ for $p \to \infty$ and $c_p \to \infty$ as $p \to 1$. This implies that $N$: $[-\frac12,\frac12] \to \R$ is smooth. By the fundamental theorem of calculus,
\begin{align}\label{Eq:Diff_Nonl_Scalar}
\begin{split}
N(x) - N(y) & = \int_{y}^{x} N'(s) ds  \\
&  = (x-y) \int_0^1 N'(y+s(x-y))  ds, \\
\end{split}
\end{align}
for all $x, y \in   [-\frac12,\frac12]$. From Sobolev embedding we know that
\[ \|u\|_{L^{\infty}(0,1)} \lesssim \| u(|\cdot|) \|_{H^{m_d}} \]
for all $u \in H^{m_d}(\B)$. Hence, we choose $\delta > 0$ so small that $\|u\|_{L^{\infty}(0,1)} < \frac12$ for all $ \| u(|\cdot|) \|_{H^{m_d}} < \delta$. Now let $u,v \in  \Ceven$ satisfy this smallness condition. Then $v(\rho) + s(u(\rho)-v(\rho)) \in [-\frac12,\frac12]$ for all $s, \rho \in [0,1]$. The  fact that $H^{m_d}(\B)$ is a Banach algebra and Eq.~\eqref{Eq:Diff_Nonl_Scalar} imply that
\begin{align*}
 & \| N  (u(|\cdot|))    - N(v(|\cdot|) )  \|_{H^{m_d}}   \\
 & \leq   \| u(|\cdot|) - v(|\cdot|)   \|_{H^{m_d} } \int_0^1  \big \|  \big [N'  \circ \big (v    + s (u - v)) \big ](|\cdot|)  \big  \|_{H^{m_d}} ds.
\end{align*}
We estimate the integral term with Moser's inequality, see for example \cite{Rauch}. To this end, we extend the relevant functions to the whole space. Using a smooth cut-off function we can construct $F$: $\R \to \R$ such that $F$ is smooth, $F = N'$ on $[-\frac12,\frac12]$ and $F = 0$ on $\R \setminus [-\frac{3}{4},\frac{3}{4}]$. The properties of $N$ imply that $F(0) = 0$. To extend $u$ and $v$ we apply Lemma \ref{Le:Extension_Bounds} and note that the extension $ U \in C^{m_d}[0,\infty)$ of $u$ can always be constructed in such a way that 
\begin{align*}
\|  U(|\cdot|) \|_{L^{\infty}(\R^d)}  = \| u \|_{L^{\infty}(0,1)}.
\end{align*}
By Lemma \ref{Le:Extension_Bounds},
\[ \| U(|\cdot|)\|_{H^{m_d}(\R^d)}  \lesssim \| u(|\cdot|) \|_{H^{m_d}(\B)}.\] 
The respective extension for $v$ is denoted by $V$. By Moser's inequality,
\begin{align*}
 \big \|  \big [N'    & \circ \big (v    + s (u - v))\big ](|\cdot|)   \big  \|_{H^{m_d}(\B)}    \\
&  \leq \big \|  \big [F  \circ \big (U    + s (U - V))\big ](|\cdot|)  \big  \|_{H^{m_d}(\R^d)}   \\
&   \lesssim  \big \|  U(|\cdot|)    + s  \big (U(|\cdot|) - V(|\cdot|) \big)  \big \|_{H^{m_d}(\R^d)}   \\
&  \lesssim \|U(|\cdot|)  \|_{H^{m_d}(\R^d)} + \|V(|\cdot|)  \|_{H^{m_d}(\R^d)}  \\
& \lesssim \|u(|\cdot|)  \|_{H^{m_d}(\B)} + \|v(|\cdot|)  \|_{H^{m_d}(\B)} 
\end{align*}
for all $s \in [0,1]$. This implies Eq.~\eqref{Eq:NonlinHelp}.
\end{proof}

\subsubsection{The nonlinear Cauchy problem}\label{Sec:NonlinearCP}
For $\mb u \in \mc H$ we consider integral equation
\begin{align*}
\Phi(\tau) = \mb S(\tau) \mb u + \int_0^{\tau} \mb S(\tau - \tau')  \mb N(\Phi(\tau')) d\tau'
\end{align*}
on the Banach space 
\[ \mc X := \{ \Phi \in C([0,\infty), \mc H): \| \Phi  \|_{\mc X} := \sup_{\tau > 0} e^{\mu_p \tau} \| \Phi(\tau) \|  < \infty \}.\]
Here, $\mu_p >0$ is the constant from Lemma \ref{Le:LinearTimeEvol}.  In the following  we denote by $\mc X_{\delta}$ the closed subspace
\[ \mc X_{\delta} := \{\Phi \in \mc X: \| \Phi  \|_{\mc X} \leq \delta \}.\]

\subsubsection{Correction of the unstable behavior}

We define
\begin{align*}
\mb C(\Phi, \mb u) := \mb P \mb u + \int_0^{\infty} e^{-\tau'} \mb P  \mb N(\Phi(\tau')) d\tau',
\end{align*}

and set
\begin{align*}
\mb K(\Phi, \mb u)(\tau) :=\mb S(\tau)\mb u  + \int_0^{\tau} \mb S(\tau - \tau')  \mb N(\Phi(\tau')) d\tau' - e^{\tau} \mb C(\Phi, \mb u).
\end{align*}

\begin{theorem} \label{Th:GlobalEx_ModEq}
Let $\delta > 0$ be sufficiently small and let $c > 0$ be sufficiently large (independent of $\delta$). For every $\mb u\in \mc H$ with $\| \mb u \| \leq \frac{\delta}{c}$ there exists a unique $\mb \Phi( \mb u) \in \mc X_{\delta}$ that solves the equation
\[ \mb \Phi( \mb u)   =\mb K (  \mb \Phi( \mb u) , \mb u). \]
Furthermore, the map $\mb u \mapsto \mb \Phi(\mb u)$ is continuous.
\end{theorem}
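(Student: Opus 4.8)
The statement is established by a contraction mapping argument for the map $\Phi \mapsto \mb K(\Phi,\mb u)$ on the complete metric space $\mc X_{\delta}$ (a closed subspace of the Banach space $\mc X$), the decisive structural point being that the correction term $e^{\tau}\mb C(\Phi,\mb u)$ is designed precisely so that the $\rg\mb P$--component of $\mb K(\Phi,\mb u)$ never grows like $e^{\tau}$. First I would record the two inputs. From $\mb N(0)=0$ and Lemma~\ref{Le:EstNonlinearity} one obtains the quadratic bounds $\|\mb N(\mb a)\| \lesssim \|\mb a\|^2$ and $\|\mb N(\mb a)-\mb N(\mb b)\| \lesssim (\|\mb a\|+\|\mb b\|)\|\mb a-\mb b\|$ for $\mb a,\mb b \in \mc B_{\delta}$. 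From Lemma~\ref{Le:LinearTimeEvol} one has $\mb S(\tau)\mb P = e^{\tau}\mb P$, $\|(1-\mb P)\mb S(\tau)\mb f\| \leq M e^{-\mu_p\tau}\|(1-\mb P)\mb f\|$, and $\mb P\mb S(\tau)=\mb S(\tau)\mb P$. Note also $\mb C(\Phi,\mb u) \in \rg\mb P$, so that $(1-\mb P)\,e^{\tau}\mb C(\Phi,\mb u)=0$, and that $\Phi \in \mc X_{\delta}$ gives $\|\mb N(\Phi(\tau'))\| \lesssim \delta^2 e^{-2\mu_p\tau'}$.

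Next I would verify the self-mapping property: for $\Phi \in \mc X_{\delta}$ and $\|\mb u\| \leq \delta/c$, split $\mb K(\Phi,\mb u)(\tau)$ according to $\mc H = \ker\mb P \oplus \rg\mb P$. On $\ker\mb P$ the correction drops out, and the stable semigroup bound together with the quadratic estimate gives $\|(1-\mb P)\mb K(\Phi,\mb u)(\tau)\| \lesssim e^{-\mu_p\tau}\|\mb u\| + \delta^2 e^{-\mu_p\tau}\int_0^{\tau}e^{-\mu_p\tau'}d\tau' \lesssim e^{-\mu_p\tau}(\|\mb u\|+\delta^2)$. On $\rg\mb P$, using $\mb S(\tau)\mb P=e^{\tau}\mb P$ and $\mb P^2=\mb P$ one computes
\[
\mb P\mb K(\Phi,\mb u)(\tau) = -\,e^{\tau}\int_{\tau}^{\infty} e^{-\tau'}\mb P\mb N(\Phi(\tau'))\,d\tau',
\]
whose norm is $\lesssim \delta^2 e^{\tau}\int_{\tau}^{\infty}e^{-(1+2\mu_p)\tau'}d\tau' \lesssim \delta^2 e^{-2\mu_p\tau}$. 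Hence $\|\mb K(\Phi,\mb u)\|_{\mc X} \leq C_0(\|\mb u\|+\delta^2) \leq C_0(\delta/c + \delta^2) \leq \delta$ once $c$ is fixed large enough (independently of $\delta$) and then $\delta$ is small enough. One also checks $\mb K(\Phi,\mb u) \in C([0,\infty),\mc H)$ from strong continuity of $(\mb S(\tau))_{\tau\geq 0}$ and dominated convergence applied to the Duhamel integral.

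For the contraction estimate, for $\Phi,\Psi \in \mc X_{\delta}$ the difference $\mb K(\Phi,\mb u)-\mb K(\Psi,\mb u)$ involves only $\mb N(\Phi(\tau'))-\mb N(\Psi(\tau'))$, which by Lemma~\ref{Le:EstNonlinearity} is $\lesssim \delta\,e^{-2\mu_p\tau'}\|\Phi-\Psi\|_{\mc X}$. Running the same $\ker\mb P/\rg\mb P$ splitting yields $\|\mb K(\Phi,\mb u)-\mb K(\Psi,\mb u)\|_{\mc X} \leq C_1\delta\|\Phi-\Psi\|_{\mc X}$ with $C_1$ independent of $\delta$; shrinking $\delta$ so that $C_1\delta < \tfrac12$ makes $\mb K(\cdot,\mb u)$ a contraction on $\mc X_{\delta}$, and the Banach fixed point theorem produces the unique $\Phi(\mb u) \in \mc X_{\delta}$ with $\Phi(\mb u)=\mb K(\Phi(\mb u),\mb u)$. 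For continuity of $\mb u \mapsto \Phi(\mb u)$ I would write $\Phi(\mb u)-\Phi(\mb v) = \big[\mb K(\Phi(\mb u),\mb u)-\mb K(\Phi(\mb v),\mb u)\big] + \big[\mb K(\Phi(\mb v),\mb u)-\mb K(\Phi(\mb v),\mb v)\big]$; the first bracket has $\mc X$--norm $\leq C_1\delta\|\Phi(\mb u)-\Phi(\mb v)\|_{\mc X}$ by the contraction bound, while the second equals $\mb S(\cdot)(\mb u-\mb v) - e^{(\cdot)}\mb P(\mb u-\mb v) = (1-\mb P)\mb S(\cdot)(\mb u-\mb v)$ since the $\rg\mb P$ contributions cancel exactly, hence has $\mc X$--norm $\lesssim \|\mb u-\mb v\|$. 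Absorbing the first term on the left gives $\|\Phi(\mb u)-\Phi(\mb v)\|_{\mc X} \lesssim \|\mb u-\mb v\|$, i.e. Lipschitz continuity.

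I do not expect a genuine obstacle here; the entire argument is abstract and routine once Lemmas~\ref{Le:EstNonlinearity} and \ref{Le:LinearTimeEvol} are in hand. The only things requiring care are the bookkeeping with $\mb P$ — verifying that $e^{\tau}\mb C(\Phi,\mb u)$ cancels exactly the $e^{\tau}$--growth in the $\rg\mb P$--direction, leaving only the $O(e^{-2\mu_p\tau})$ tail displayed above — and checking that the constant $c$ governing the smallness of the data can be chosen before, and independently of, $\delta$.
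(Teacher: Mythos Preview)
Your proposal is correct and follows essentially the same approach as the paper's proof: the $\mb P/(1-\mb P)$ splitting, the quadratic bound on $\mb N$, the tail-integral representation of $\mb P\mb K(\Phi,\mb u)(\tau)$, and the Banach fixed point argument all match. Your continuity argument is in fact slightly sharper, yielding Lipschitz dependence on $\mb u$ via the exact cancellation $\mb K(\Phi(\mb v),\mb u)-\mb K(\Phi(\mb v),\mb v) = (1-\mb P)\mb S(\cdot)(\mb u-\mb v)$, whereas the paper only states continuity.
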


\begin{proof}
We argue along the lines of \cite{DonSch12}, \cite{DonSch13}. For fixed $(\Phi, \mb u) \in \mc X_{\delta} \times \mc H$, continuity of the map $\tau \mapsto \mb K(\Phi, \mb u)(\tau)$: $[0,\infty) \to \mc H$ follows essentially from the strong continuity of the semigroup. To see that $\mb K(\cdot , \mb u)$ maps $\mc X_{\delta}$ into itself for $\|\mb u\| \leq \frac{\delta}{c}$, we decompose the operator according to 
\[ \mb K(\Phi, \mb u) =  \mb P \mb K(\Phi, \mb u)+(1- \mb P) \mb K(\Phi, \mb u).\]
By Lemma \ref{Le:EstNonlinearity} we have
\[ \| \mb N(\Phi(\tau))\|   \lesssim \delta^2 e^{-2\mu_p \tau}, \]
for $\Phi \in \mc X_{\delta}$ and all $\tau >0$. Hence,
\begin{align*}
\|\mb P\mb K(\Phi, \mb u)(\tau)\| \lesssim  e^{\tau} \int_{\tau}^{\infty} e^{-\tau'} \| \mb P \mb N(\Phi(\tau')) \| d\tau'  \lesssim \delta^2  e^{-2\mu_p \tau},
\end{align*}
and 
\begin{align*}
\begin{split}
\|(1-\mb P) \mb K(\Phi, \mb u)(\tau) \| &  \lesssim   e^{- \mu_p \tau} \| \mb u \| +  \int_{0}^{\tau} e^{- \mu_p(\tau -\tau')} \|\mb N(\Phi(\tau')) \| d\tau'  \\ &  \lesssim ( \tfrac{\delta}{c} +  \delta^2) e^{- \mu_p \tau}.
\end{split}
\end{align*}
Consequently, $  e^{\mu_p \tau} \|\mb K(\Phi, \mb u)(\tau)\| \lesssim \tfrac{\delta}{c} +  \delta^2 \leq \delta $
for all $\tau > 0$, given that $c > 0$ is sufficiently large and $\delta > 0$ is sufficiently small. For the contraction property of $\mb K(\cdot, \mb u)$
we use a similar decomposition and the fact that
\[ \| \mb N(\Phi(\tau)) - \mb N(\Psi(\tau))  \|   \lesssim   \delta e^{-\mu_p \tau}  \| \Phi(\tau) - \Psi(\tau)\| \]
for any $\Phi,\Psi \in \mc X_{\delta}$ and all $\tau >0$ by Lemma \ref{Le:EstNonlinearity}. In particular,
\begin{align*}
\|\mb P[ \mb K(\Phi, \mb u)(\tau) - \mb K(\Psi, \mb u)(\tau)] \| &  \lesssim  \delta \| \Phi - \Psi \|_{\mc X}   \int_{\tau}^{\infty} e^{\tau -\tau'(1+2 \mu_p)}  d\tau'   \\
& \lesssim \delta  e^{-2\mu_p \tau} \| \Phi - \Psi \|_{\mc X},
\end{align*}
and
\begin{align*}
\|(1 -\mb P) [\mb K(\Phi, \mb u)(\tau) - \mb K(\Psi, \mb u)(\tau)]\| & \lesssim \delta e^{- \mu_p \tau} \int_{0}^{\tau} \| \Phi(\tau') - \Psi(\tau')\| d\tau'  \\& \lesssim \delta  e^{-\mu_p \tau} \| \Phi - \Psi \|_{\mc X},
\end{align*}
which implies that $\mb K(\cdot, \mb u)$ is contracting given that $\delta$ is sufficiently small. An application of the  Banach fixed point theorem yields the existence of a unique solution $\mb \Phi(\mb u) \in \mc X_{\delta}$. Continuity of the solution map $\mb u \mapsto \mb \Phi(\mb u)$ follows easily from the estimate
\[   \| \mb K( \Phi, \mb u)(\tau)  -\mb K( \Phi, \mb v)(\tau)  \| = \| \mb S(\tau)(1- \mb P)(\mb u- \mb v) \| = e^{- \mu_p \tau} \| \mb u - \mb v\| \]
and the fact that $\mb K(\cdot, \mb u)$ is a contraction mapping.
\end{proof}

\subsubsection{The initial data operator}

For $R > 0$ we set \[\mc H^R := H_r^{m_d}\times H_r^{m_d-1}(\B_R)\] with norm defined by 
\[ \| \mb v \|^2_{\mc H^R} = \| v_1(|\cdot|) \|^2_{H^{m_d}(\B_R)} + \| v_2(|\cdot|) \|^2_{H^{m_d-1}(\B_R)},\]
cf.~\eqref{Def:SobRad}. If $R=1$, then we simply use the symbol $\mc H$, as before. In particular, $\| \cdot \|  = \| \cdot \|_{\mc H^1} $. Set
\[  \mb V(\mb v ,T)(\rho) := \left ( \begin{array}{c}  T^{\frac{2}{p-1}}  v_1(T\rho)  \\ T^{\frac{p+1}{p-1}}  v_2(T\rho) \end{array} \right ), \quad \mb \kappa(T) :=   \left ( \begin{array}{c}  (\tfrac{T}{T_0})^{\frac{2}{p-1}}c_p  \\ \  (\tfrac{T}{T_0})^{\frac{p+1}{p-1}} \frac{2}{p-1} c_p  \end{array} \right ),\]
and 
\[  \mb U(\mb v, T)  :=  \mb V(\mb v ,T) + \mb \kappa(T) - \mb \kappa(T_0) .\]

\begin{lemma}\label{Le:InitialData}
Let $\mb v \in \mc H^{T_0+\delta}$ for $\delta > 0$ sufficiently small. Then $T \mapsto \mb U(\mb v, T)$: $[T_0-\delta, T_0 + \delta ] \to \mc H$ is continuous. Furthermore, if $\| \mb v \|_{\mc H^{T_0 + \delta}} \leq \delta$ then
\[  \|\mb U(\mb v, T) \| \lesssim \delta \]
for all $T \in [T_0- \delta, T_0 + \delta]$.
\end{lemma}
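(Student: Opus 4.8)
The plan is to split $\mb U(\mb v,T)=\mb V(\mb v,T)+\bigl(\mb\kappa(T)-\mb\kappa(T_0)\bigr)$ and handle the two pieces separately. The term $\mb\kappa(T)-\mb\kappa(T_0)$ is elementary: both of its components are constant in $\rho$, so all their derivatives vanish and the $\mc H$-norm reduces to a fixed multiple of $\bigl|(\tfrac{T}{T_0})^{\frac{2}{p-1}}-1\bigr|+\bigl|(\tfrac{T}{T_0})^{\frac{p+1}{p-1}}-1\bigr|$. For $\delta<T_0$ the maps $T\mapsto(T/T_0)^{s}$ are smooth on $[T_0-\delta,T_0+\delta]$, hence continuous, and by the mean value theorem this expression is $\lesssim|T-T_0|\le\delta$; this gives continuity of $T\mapsto\mb\kappa(T)-\mb\kappa(T_0)$ together with the bound $\|\mb\kappa(T)-\mb\kappa(T_0)\|\lesssim\delta$.

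For $\mb V(\mb v,T)$ the key ingredient is a scaling estimate for Sobolev norms on balls. Writing $\widetilde v_1(\xi):=v_1(|\xi|)$, one has $\partial^\alpha\bigl(v_1(T|\cdot|)\bigr)(\xi)=T^{|\alpha|}(\partial^\alpha\widetilde v_1)(T\xi)$ for $\xi\in\B$, and the substitution $\eta=T\xi$ yields $\bigl\|\partial^\alpha\bigl(v_1(T|\cdot|)\bigr)\bigr\|_{L^2(\B)}^2=T^{2|\alpha|-d}\|\partial^\alpha\widetilde v_1\|_{L^2(\B_T)}^2$, and likewise for the $v_2$-component with $m_d-1$ derivatives. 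Since $T$ ranges over the compact interval $[T_0-\delta,T_0+\delta]\subset(0,\infty)$, the prefactors $T^{2|\alpha|-d}$ as well as the weights $T^{\frac{2}{p-1}}$ and $T^{\frac{p+1}{p-1}}$ are all $\simeq1$; combined with $\B_T\subset\B_{T_0+\delta}$ (so $\|\cdot\|_{L^2(\B_T)}\le\|\cdot\|_{L^2(\B_{T_0+\delta})}$), this gives $\|\mb V(\mb v,T)\|\lesssim\|\mb v\|_{\mc H^{T_0+\delta}}$ uniformly in $T$. Together with the first paragraph, $\|\mb v\|_{\mc H^{T_0+\delta}}\le\delta$ then implies $\|\mb U(\mb v,T)\|\lesssim\delta$, which is the second claim.

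For the continuity of $T\mapsto\mb V(\mb v,T)$ we would argue by density. If the components of $\mb v$ are smooth on $\overline{\B_{T_0+\delta}}$, then $T\mapsto v_j(T|\cdot|)$ is easily seen to be continuous (in fact $C^1$) into $H^{m_d}$, resp.\ $H^{m_d-1}$ — e.g.\ by writing $v_j(T|\xi|)-v_j(T'|\xi|)=(T-T')|\xi|\int_0^1 v_j'\bigl(T'|\xi|+s(T-T')|\xi|\bigr)\,ds$ and estimating in $C^{m_d}\hookrightarrow H^{m_d}$ — and multiplication by the continuous scaling weights preserves this. For general $\mb v$ we pick smooth $\mb v^{(n)}\to\mb v$ in $\mc H^{T_0+\delta}$ using the density of $C_e^\infty[0,T_0+\delta]$ in $H_r^{m_d}(\B_{T_0+\delta})$ and in $H_r^{m_d-1}(\B_{T_0+\delta})$ noted after \eqref{Def:SobRad}; since $\|\mb V(\mb v,T)-\mb V(\mb v^{(n)},T)\|=\|\mb V(\mb v-\mb v^{(n)},T)\|\lesssim\|\mb v-\mb v^{(n)}\|_{\mc H^{T_0+\delta}}$ uniformly in $T$ by the scaling estimate, the continuous maps $T\mapsto\mb V(\mb v^{(n)},T)$ converge uniformly on $[T_0-\delta,T_0+\delta]$, so the limit $T\mapsto\mb V(\mb v,T)$ is continuous as well. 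The argument is essentially book-keeping; the one point needing care is the uniformity in $T$ of the scaling constants — which is exactly what the compactness of $[T_0-\delta,T_0+\delta]$ in $(0,\infty)$, i.e.\ the smallness of $\delta$, provides — since this is what makes the density argument close.
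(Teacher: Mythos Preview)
Your proof is correct and follows essentially the same approach as the paper: both split $\mb U$ into the $\mb V$-part and the $\mb\kappa$-part, use the scaling formula for Sobolev norms on balls together with $\B_T\subset\B_{T_0+\delta}$ to get the uniform bound, and establish continuity via a density argument with smooth approximants. The only cosmetic difference is that the paper applies the triangle inequality directly to $\|v_1(|T\cdot|)-v_1(|\widetilde T\cdot|)\|$ with a single smooth $\tilde v_1$, whereas you phrase it as uniform convergence of a sequence of continuous maps; these are equivalent.
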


\begin{proof}
For simplicity we prove the result only for $T_0 = 1$. The general case is analogous.  Let $\mb v \in \mc H^{1+\delta}$ for $0 < \delta \leq \frac{1}{2}$. To show continuity of the map $T \mapsto \mb U(\mb v, T)$ we consider the first component and estimate
\begin{align*}
& \|[\mb U(\mb v, T)]_1  - [\mb U(\mb v,  \widetilde T)]_1 \|_{H^{m_d}(\B)}   \\
 & =  \| T^{\frac{2}{p-1}} v_1(|T\cdot|)    - \widetilde T^{\frac{2}{p-1}} v_1( |\widetilde T\cdot|)  + T^{\frac{2}{p-1}} c_p  - \widetilde T^{\frac{2}{p-1} }c_p\|_{H^{m_d}(\B)}   \\
 & \lesssim   \| v_1(|T\cdot|) -v_1( |\widetilde T\cdot|) \|_{H^{m_d}(\B)}  + |T^{\frac{2}{p-1}}   -\widetilde T^{\frac{2}{p-1}}  | \|v_1(|T\cdot|) \|_{H^{m_d}(\B)}   \\
 & +  | T^{\frac{2}{p-1}} -\widetilde T^{\frac{2}{p-1}}  |
\end{align*}
for $T, \widetilde T \in [1-\delta, 1+ \delta]$. Scaling implies that for all $T \in [1-\delta, 1+\delta]$
\begin{align*}
\| v( |T\cdot|) \|_{H^{m_d}(\B)} \lesssim \| v(|\cdot|) \|_{H^{m_d}(\B_{1+\delta})}
\end{align*}
for $v \in H_r^{m_d}(\B_{1+\delta})$. Using this and the triangle inequality we infer that for all $v_1, \tilde v_1 \in H_r^{m_d}(\B_{1+\delta})$
\begin{align*}
  \| v_1 & (|T\cdot|) -v_1(| {\widetilde T} \cdot|) \|_{H^{m_d}(\B)}  \leq  \| v_1(|T \cdot|) -\tilde v_1( |T \cdot|) \|_{H^{m_d}(\B)}  \\
  &  + \| \tilde v_1(|T \cdot|) -\tilde v_1(|\widetilde T\cdot|) \|_{H^{m_d}(\B)} +  \| \tilde  v_1(| \widetilde T \cdot|) -v_1(|\widetilde T \cdot|) \|_{H^{m_d}(\B)}  \\
  & \lesssim \| v_1(|\cdot|) -\tilde v_1( |\cdot|) \|_{H^{m_d}(\B_{1+\delta})}   + \| \tilde v_1(|T \cdot|) -\tilde v_1(|\widetilde T \cdot|) \|_{H^{m_d}(\B)}.
\end{align*}
Since $C_e^{\infty}[0,1+\delta]$ is dense in $H_r^{m_d}(\B_{1+\delta})$ there is a $ \tilde v_1 \in C_e^{\infty}[0,1+\delta]$ such that 
$\| v_1(|\cdot|) -\tilde v_1( |\cdot|) \|_{H^{m_d}(\B_{1+\delta})}  < \varepsilon $ for given $\varepsilon > 0$. From the smoothness of $\tilde v_1$ we infer that
infer that
\begin{align*}
\lim_{T \to \tilde T} \| \tilde v_1(T \cdot)   -  \tilde v_1(\widetilde T \cdot)  \|_{H^{m_d}(\B)} = 0.
\end{align*}
Similar estimates can be obtained for the second component which yields the claimed continuity. For $\mb v  \in \mc H^{1+\delta}$, $\| \mb v \|_{\mc H^{1 + \delta}} \leq \delta$ and $T \in [1-\delta, 1 + \delta ]$,
\begin{align*}
\| [\mb U(\mb v, T) ]_1 \|_{H^{m_d}(\B)} &  \lesssim  T^{\frac{2}{p-1}} \| v_1(T \cdot)\|_{H^{m_d}(\B)} + c_p |T^{\frac{2}{p-1}}  -1|  \\\
& \lesssim\| v_1\|_{H^{m_d}(\B_{1+\delta})}+|T - 1|  \lesssim \delta.
\end{align*}
A similar estimate can be obtained for the second component and we infer that 
\[  \| \mb U(\mb v, T)  \| \lesssim \delta. \]
\end{proof}

\subsubsection{Variation of the blowup time}

\begin{theorem}\label{Th:GlobalExistence_Orig}
Set $R:=T_0+\frac{\delta}{c}$ for $c > 0$ sufficiently large and $\delta > 0$ sufficiently small. For every $\mb v \in \mc H^{R}$ with $\| \mb v \|_{\mc H^{R}} \leq \frac{\delta}{c^2}$ there exists a $T \in [T_0 - \frac{\delta}{c}, T_0 + \frac{\delta}{c}]$ and a function $\Phi  \in \mc X_{\delta}$ which satisfies
\begin{align}\label{Eq:Orig_Duhamel}
\Phi(\tau) = \mb S(\tau)\mb U(\mb v, T) + \int_0^{\tau} \mb S(\tau - \tau')  \mb N( \Phi (\tau')) d\tau'
\end{align}
for all $\tau > 0$.  In particular, $\Phi$ is the unique solution of this equation in $C([0,\infty),\mc H)$.
\end{theorem}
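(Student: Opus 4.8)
The plan is to use the correction term as a device that removes the unstable direction. By Theorem~\ref{Th:GlobalEx_ModEq} we already have, for each admissible data vector, a unique solution in $\mc X_\delta$ of the \emph{modified} equation $\mb\Phi=\mb K(\mb\Phi,\cdot)$, so it suffices to exhibit one value $T\in[T_0-\tfrac\delta c,T_0+\tfrac\delta c]$ for which
\[ \mb C\big(\mb\Phi(\mb U(\mb v,T)),\,\mb U(\mb v,T)\big)=0; \]
for such a $T$ the term $e^{\tau}\mb C$ in $\mb K$ disappears and $\Phi:=\mb\Phi(\mb U(\mb v,T))$ solves \eqref{Eq:Orig_Duhamel}. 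First I would set this up as a continuous scalar problem. For $\mb v\in\mc H^{R}$, $R=T_0+\tfrac\delta c$, with $\|\mb v\|_{\mc H^{R}}\le\tfrac\delta{c^2}$, Lemma~\ref{Le:InitialData} (with its $\delta$ replaced by $\tfrac\delta c$) gives continuity of $T\mapsto\mb U(\mb v,T)$ on $[T_0-\tfrac\delta c,T_0+\tfrac\delta c]$ together with the bound $\|\mb U(\mb v,T)\|\le C_0\tfrac\delta c$ with $C_0$ absolute; choosing $c$ large enough that $C_0\tfrac\delta c$ lies below the smallness threshold of Theorem~\ref{Th:GlobalEx_ModEq}, the composition $T\mapsto\Phi_T:=\mb\Phi(\mb U(\mb v,T))$ is well defined and continuous into $\mc X_\delta$. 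Since $\rg\mb P=\langle\mb g\rangle$ is one--dimensional (Lemma~\ref{Le:LinearTimeEvol}, Lemma~\ref{Le:Spectrum}), we may write $\mb C(\Phi_T,\mb U(\mb v,T))=f(T)\,\mb g$ for a scalar function $f$, and the goal becomes: find a zero of $f$.

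The heart of the matter is the dependence of $f$ on $T$. Inserting $\mb U(\mb v,T)=\mb V(\mb v,T)+\mb\kappa(T)-\mb\kappa(T_0)$ into the definition of $\mb C$, the dominant contribution comes from $\mb P(\mb\kappa(T)-\mb\kappa(T_0))$: since $T\mapsto\mb\kappa(T)$ is real--analytic into $\mc H$ near $T_0$, since $\mb P\mb g=\mb g$ and $\|\mb P\|\lesssim1$, and since the elementary computation gives $\mb\kappa'(T_0)=\tfrac{2c_p}{(p-1)T_0}\,\mb g$ with $c_p>0$, one obtains
\[ \mb P\big(\mb\kappa(T)-\mb\kappa(T_0)\big)=\tfrac{2c_p}{(p-1)T_0}(T-T_0)\,\mb g+O\big((T-T_0)^2\big). \]
The remaining pieces are harmless because they are small: $\|\mb P\mb V(\mb v,T)\|\lesssim\|\mb v\|_{\mc H^{R}}\le\tfrac\delta{c^2}$ by scaling, and, since $\mb N(0)=0$ and $\Phi_T\in\mc X_\delta$, Lemma~\ref{Le:EstNonlinearity} yields $\|\mb N(\Phi_T(\tau))\|\lesssim\delta^2e^{-2\mu_p\tau}$, hence $\big\|\int_0^\infty e^{-\tau'}\mb P\mb N(\Phi_T(\tau'))\,d\tau'\big\|\lesssim\delta^2$. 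Putting this together, $f$ is continuous (the continuity of $\tau'\mapsto\mb N(\Phi_T(\tau'))$ in $T$ comes from continuity of $T\mapsto\Phi_T$ in $\mc X$ and the Lipschitz bound of Lemma~\ref{Le:EstNonlinearity}) and
\[ f(T)=\tfrac{2c_p}{(p-1)T_0}(T-T_0)+e(T),\qquad |e(T)|\le C\big(\tfrac\delta{c^2}+\delta^2\big), \]
for $T\in[T_0-\tfrac\delta c,T_0+\tfrac\delta c]$ and an absolute constant $C$.

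It remains to evaluate $f$ at the endpoints: $|f(T_0\pm\tfrac\delta c)|\ge\tfrac\delta c\big(\tfrac{2c_p}{(p-1)T_0}-C(\tfrac1c+c\delta)\big)$ with the sign of $T-T_0$. Fixing first $c$ large and then $\delta$ small so that $c\delta$ is also small, the bracket is strictly positive, so $f$ has opposite strict signs at $T_0-\tfrac\delta c$ and $T_0+\tfrac\delta c$. The intermediate value theorem --- the one--dimensional instance of the Brouwer fixed point theorem used in \cite{DonSch14b} --- then produces $T\in(T_0-\tfrac\delta c,T_0+\tfrac\delta c)$ with $f(T)=0$, i.e.\ $\mb C(\Phi_T,\mb U(\mb v,T))=0$, so that $\Phi:=\Phi_T\in\mc X_\delta$ satisfies $\Phi(\tau)=\mb S(\tau)\mb U(\mb v,T)+\int_0^{\tau}\mb S(\tau-\tau')\mb N(\Phi(\tau'))\,d\tau'$, which is \eqref{Eq:Orig_Duhamel}. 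Uniqueness in $C([0,\infty),\mc H)$ then follows from a standard continuity-in-time bootstrap combined with Gronwall's inequality and the local Lipschitz estimate of Lemma~\ref{Le:EstNonlinearity}: a continuous solution emanating from a small ball stays in $\mc B_\delta$ long enough for the estimate to apply, and this propagates because our $\Phi$ decays and therefore never leaves $\mc B_\delta$.

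I expect the main obstacle to be the quantitative non--degeneracy in the second step: one must verify that varying the blowup time $T$ displaces the Cauchy data $\mb U(\mb v,T)$ precisely along the unstable eigendirection $\mb g$ to leading order, with a \emph{nonzero}, explicitly computable slope $\tfrac{2c_p}{(p-1)T_0}$ --- this is the analytic manifestation of the time--translation symmetry responsible for the eigenvalue $\lambda=1$ in Lemma~\ref{Le:Spectrum} --- and that this term of size $\sim\tfrac\delta c$ genuinely dominates the $O(\tfrac\delta{c^2}+\delta^2)$ errors over an interval of length $\tfrac{2\delta}c$. This is exactly what dictates the order of the quantifiers: $c$ chosen large, then $\delta$ chosen small in terms of $c$, then $\mb v$ small in terms of both.
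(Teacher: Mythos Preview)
Your proof is correct and follows essentially the same approach as the paper: reduce to annihilating the correction $\mb C$, use that $\mb\kappa'(T_0)=\alpha\mb g$ with $\alpha>0$ (you compute $\alpha=\tfrac{2c_p}{(p-1)T_0}$ explicitly), bound the remaining terms by $O(\tfrac{\delta}{c^2})+O(\delta^2)$, and conclude by a one--dimensional topological argument. The only cosmetic difference is that the paper rewrites the scalar equation as $T=T_0+F(T)$ with $|F(T)|\le\tfrac\delta c$ and invokes the fixed point property of a continuous self--map of an interval, whereas you check the sign of $f$ at the endpoints and apply the intermediate value theorem directly; these are of course equivalent, and for uniqueness the paper simply cites \cite{DonSch12}.
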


\begin{proof}
Let  $\mb v \in \mc H^{R}$. For $\delta$ and $c$ chosen appropriately, the smallness condition for $\mb v$ and Lemma \ref{Le:InitialData} imply that $\mb U(\mb v, T)$ satisfies the assumptions of Theorem \ref{Th:GlobalEx_ModEq} for all $T \in [T_0 -  \tfrac{\delta}{c},  T_0 +  \tfrac{\delta}{c}]$. Hence, for every such $T$ there exists a $\Phi_T := \mb \Phi (\mb U(\mb v,T))  \in \mc X_{\delta}$ satisfying
\begin{align*}
\Phi_T(\tau) = \mb S(\tau) & \mb U(\mb v, T)   + \int_0^{\tau} \mb S(\tau - \tau')  \mb N( \Phi_T (\tau')) d\tau'  \\
& - e^{\tau} \mb C(\Phi_T,\mb U(\mb v, T))
\end{align*}
for all $\tau > 0$. We show that there is a $T_{\mb v} \in  [T_0 -  \tfrac{\delta}{c},  T_0 +  \tfrac{\delta}{c}]$ such that 
$\mb C (\Phi_{T_{\mb v}}, \mb U(\mb v, T_{\mb v} )) = 0$. Note that $\mathrm{rg~}\mb C = \langle \mb g \rangle$, where $\mb g$ is the symmetry mode from Lemma \ref{Le:Spectrum}. Hence, it suffices to show that 
\begin{align*}
\big (\mb C (\Phi_{T_{\mb v}} , \mb U(\mb v, T_{\mb v} )) \big | \mb g \big ) = 0.
\end{align*}

We find that
\begin{align*}
 \left ( \int_0^{\infty} e^{-\tau} \mb P\mb N( \Phi_{T}(\tau))  d\tau \bigg | \mb g \right ) &  \lesssim  \| \mb g \|  \int_0^{\infty} e^{-\tau} \| \mb P\mb N( \Phi_{T}(\tau))\|  d\tau   \\
& \lesssim   \int_0^{\infty} e^{-\tau} \| \Phi_{T}(\tau)\|^2  d \tau  \lesssim \delta^2 .
\end{align*}
The key observation is that  
\[ \partial_T \mb{\kappa}(T)|_{T=T_0} =   \alpha \mb g\]
for a constant $\alpha > 0$.
By Taylor expansion of $\mb \kappa$ we get 
\[ \mb U(\mb v, T)  = \mb V(\mb v, T) + (T-T_0)  \alpha \mb g + (T-T_0)^2 \mb q_T   \]
for all $T \in [T_0 - \frac{\delta}{c}, T_0 + \frac{\delta}{c}] $ and for some remainder term $\mb q_T$.
Thus,
\begin{align*}
(\mb P \mb U(\mb v, T)| \mb g) & = (\mb P \mb V( \mb v, T)| \mb g)  +  \alpha (T-T_0) \| \mb g \|^2 +  O(\tfrac{\delta^2}{c^2})   \\
& =  \alpha (T-T_0) \| \mb g \|^2 +  O(\tfrac{\delta}{c^2})  
\end{align*}
by definition of $\mb V$ and the smallness of $\mb v$ in $\mc H^{T_0 + \frac{\delta}{c}}$. We notice that the order terms depend continuously on $T$. Summing up, we obtain that the equation
\begin{align*}
\big (\mb C (\Phi_{T} , \mb U(\mb v, T ))| \mb g \big ) =  \alpha (T-T_0) \| \mb g \|^2  +  O(\tfrac{\delta}{c^2}) +  O(\delta^2) = 0
\end{align*}
is equivalent to
\[ T= T_0 + F(T) \]
where $F(T) =  O(\tfrac{\delta}{c^2}) +  O(\delta^2)$.  For $c$ sufficiently large and $\delta = \delta(c)$ sufficiently small we get $|F(T)| \leq \frac{\delta}{c}$. Hence, the continuous function $T \mapsto T_0 + F(T) $ maps the interval $[T_0 - \frac{\delta}{c}, T_0 + \frac{\delta}{c}] $ to itself and has thus a fixed point at some $T = T_{\mb v}$. We therefore obtain a solution $\Phi_{T_{\mb v}} \in \mc X_{\delta} $ of the original equation \eqref{Eq:Orig_Duhamel}. For the uniqueness of the solution in $C([0,\infty),\mc H)$ we refer the reader to the proof of Theorem 4.11 in \cite{DonSch12}.
\end{proof}

\subsubsection{Proof of Theorem \ref{Th:Main}}

Choose $\delta, c > 0$ such that Theorem \ref{Th:GlobalExistence_Orig} holds and set $\delta' = \delta/c$. Let $(u_0,u_1) \in H^{\frac{d+1}{2}} \times H^{\frac{d-1}{2}}(\R^d)$ be radial functions, i.e.,  $(u_0,u_1) = (\tilde u_0(|\cdot|), \tilde u_1(|\cdot|))$,  that satisfy 
\begin{align*}
\|(u_0,u_1)-  u_{T_0}[0] \|_{H^{\frac{d+1}{2}} \times H^{\frac{d-1}{2}}(\B_{T_0+\delta'})}  \leq  \tfrac{\delta'}{c}.
\end{align*}
For $(f,g) = ( \tilde u_0, \tilde u_1  )-  u_{T_0}[0]$, cf.~Eq.~\eqref{Eq:Radial_NLW_Data}, this assumption implies that 
\begin{align*}
\| (f,g) \|_{\mc H^{T_0+\delta/c}}  \leq \tfrac{\delta}{c^2}.
\end{align*}
Hence, $\mb v :=  (f, g)$ satisfies the assumptions of Theorem \ref{Th:GlobalExistence_Orig}. We infer that there exists a $T \in [T_0 - \delta, T_0  + \delta]$ such that Eq.~\eqref{Eq:Orig_Duhamel} has unique solution $\Phi \in C([0,\infty), \mc H)$ with
\[ \| \Phi(\tau) \| \leq \delta e^{-\mu_p \tau}, \quad \forall \tau > 0. \]
Hence, $\Psi = \Phi + \mb c_p$ is a solution of Eq.~\eqref{Eq:First_order_css_nonl} (in the Duhamel sense) with initial data $\Psi(0) =\bs U(\bs v,T) + \mb c_p$. Consequently,  
\[ u(t,x) = (T-t)^{-\frac{2}{p-1}} \psi_1( - \log(T-t) + \log T , \tfrac{|x|}{T-t}) \]
is a radial solution of the original wave equation \eqref{Eq:NLW} with initial data 
\begin{align*}
u(0,x) =T^{-\frac{2}{p-1}} \psi_1( 0 , \tfrac{|x|}{T}) = u_{T_0}(0,x) + f(|x|) = u_0(x) \\
\partial_t u(0,x) =T^{-\frac{p+1}{p-1}} \psi_1( 0 , \tfrac{|x|}{T}) = \partial_t u_{T_0}(0,x)+ g(|x|) = u_1(x)
\end{align*}
for $x \in \B_{T}$. Furthermore, $u$ satisfies the estimates
\begin{align*}
(T- & t)^{\frac{2}{p-1} -\frac{d}{2}+k}  \|u(t,\cdot) - u_T(t,\cdot) \|_{\dot H^k(\B_{T-t})}  \\
& = (T-t)^{-\frac{d}{2}+k} \|\psi_1( - \log(T-t) + \log T , \tfrac{|\cdot|}{T-t}) - c_p\|_{\dot H^k(\B_{T-t})} \\
& = \|\psi_1( - \log(T-t) + \log T , |\cdot|) - c_p\|_{\dot H^k(\B)}   \\
& = \|\varphi_1( - \log(T-t) + \log T , |\cdot|)\|_{\dot H^k(\B)}  \\
& \lesssim  \|\varphi_1( - \log(T-t) + \log T , |\cdot|)\|_{H^{m_d}(\B)} \\
&  \lesssim \| \Phi( - \log(T-t) + \log T)\| \lesssim (T-t)^{\mu_p}.
\end{align*}
for $k = 0, \dots, \frac{d+1}{2}$. The bounds for the time derivative of the solution follow accordingly.

\section{Improvement of the topology - Proof of Theorem~\ref{Th:Main2}}

The verification of Theorem \ref{Th:Main2} is analogous to the proof of Theorem \ref{Th:Main} and we only discuss the main arguments. With the definitions of Section \ref{Sec:FuncSet} we introduce the product space $\mc {\tilde H} := H_r^{m_d-1} \times  H_r^{m_d-2}(\B)$ with norm
\[ \norm{\mb u }^2 = \| u_1(|\cdot|) \|^2_{H^{m_d-1}(\B)} +  \| u_2(|\cdot|) \|^2_{H^{m_d-2}(\B)}. \]

\subsubsection{Time evolution for the linearized problem}

We proceed as in Section \ref{Sec:LinWellPosed}. Since most proofs are similar or can even be copied verbatim we only sketch the main steps and point out differences. With the same notation as in Section \ref{Sec:FuncSet} we define
\begin{align*}
\norm{\mb u}^2_D := \|\D u_1\|^2_{\dot H^1(0,1)} + \|\D u_2\|^2_{L^2(0,1)}.
\end{align*}
\begin{lemma}\label{Le:Prop_Htilde}
We have 
\[ \norm{\mb u }^2  \simeq  \|\D u_1\|^2_{H^1(0,1)} + \|\D u_2\|^2_{L^2(0,1)} \simeq \norm{\mb u}^2_D\]  for all $\mb u \in \mc {\tilde H}$. Furthermore, $\D u_1 \in C[0,1]$ and $\D u_1(0) = 0$.
\end{lemma}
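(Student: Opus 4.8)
The plan is to mirror, one regularity level down, the proofs of Lemma~\ref{Prop:EquivNorms} and Corollary~\ref{Cor:PropDu}: all the needed machinery is already available. The essential ingredients are the weighted-$L^2$ description of radial Sobolev norms from Lemma~\ref{Le:Aux_Norm} (together with its analogue for $H_r^{m_d-2}(\B)$, obtained by the very same argument), the explicit representation \eqref{Eq:DdSum} of $\D$, and the fact that $\D$ is invertible on $\mc{\tilde H}$ with inverse $\K$, which holds as in Section~\ref{Sec:FuncSet} because $\mathrm{ker}\,\D\cap\mc{\tilde H}=\{0\}$ (the functions in $\mathrm{ker}\,\D$ are too singular at the origin to lie in $H_r^{m_d-2}(\B)$). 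Everything then reduces to the two scalar equivalences
\[
\|u(|\cdot|)\|_{H^{m_d-1}(\B)}\simeq\|\D u\|_{H^1(0,1)},\qquad
\|u(|\cdot|)\|_{H^{m_d-2}(\B)}\simeq\|\D u\|_{L^2(0,1)},
\]
which together yield $\norm{\mb u}^2\simeq\|\D u_1\|^2_{H^1(0,1)}+\|\D u_2\|^2_{L^2(0,1)}$ on $\mc{\tilde H}$.

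For both equivalences the inequality $\|\D u\|\lesssim\|u(|\cdot|)\|$ is immediate: inserting \eqref{Eq:DdSum} and differentiating once, every summand of $\D u$ and of $[\D u]'$ is a weighted derivative $\rho^{k}u^{(j)}$ with $k\ge j$ and $0\le j\le m_d-1$ (resp.\ $0\le j\le m_d-2$), hence, since $\rho\in(0,1)$, is dominated in $L^2(0,1)$ by the corresponding term of the weighted sum furnished by Lemma~\ref{Le:Aux_Norm}. The reverse inequality $\|u(|\cdot|)\|\lesssim\|\D u\|$ is the Hardy-type bound for $\K=(\cdot)^{2-d}\mc K^{\frac{d-3}{2}}$: writing $u=\K(\D u)$ and invoking the mapping properties of $\K$, one recovers each term of the weighted sum of Lemma~\ref{Le:Aux_Norm} from $\|\D u\|_{H^1(0,1)}$ (resp.\ $\|\D u\|_{L^2(0,1)}$). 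This is essentially the computation of Appendix~\ref{App:Equivalent_norms} with every index lowered by one; it is the only genuinely analytic step, and the point to check is that the Hardy exponents remain admissible at the levels $m_d-1=\frac{d-1}{2}$ and $m_d-2=\frac{d-3}{2}$. I expect this to be the main (though still routine) obstacle.

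It remains to prove $\D u_1\in C[0,1]$ with $\D u_1(0)=0$ and the equivalence with $\norm{\cdot}_D$. The inequality $\|\D u\|_{H^1(0,1)}\lesssim\|u(|\cdot|)\|_{H^{m_d-1}(\B)}$ just established shows $\D u_1\in H^1(0,1)\hookrightarrow C[0,1]$ and that $\D\colon H_r^{m_d-1}(\B)\to C[0,1]$ is bounded; since $\D u(\rho)=\sum_j a_j\rho^{j+1}u^{(j)}(\rho)\to 0$ as $\rho\to 0^+$ for $u\in\Ceven$, and $\Ceven$ is dense in $H_r^{m_d-1}(\B)$, passing to the limit gives $\D u_1(0)=0$ for all $u_1\in H_r^{m_d-1}(\B)$.

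Finally, the vanishing at the origin turns the last equivalence into a one-line Poincaré estimate: since $\D u_1(0)=0$, Cauchy--Schwarz gives $|\D u_1(\rho)|^2\le\int_0^1|[\D u_1]'(s)|^2\,ds$, so $\|\D u_1\|_{L^2(0,1)}\le\|\D u_1\|_{\dot H^1(0,1)}$; hence $\|\D u_1\|^2_{H^1(0,1)}+\|\D u_2\|^2_{L^2(0,1)}\le 2\norm{\mb u}_D^2\le 2\big(\|\D u_1\|^2_{H^1(0,1)}+\|\D u_2\|^2_{L^2(0,1)}\big)$, which together with the first paragraph proves all three equivalences. Note that, one derivative below Lemma~\ref{Prop:EquivNorms}, no boundary correction at $\rho=1$ is needed: here $\D u_1(0)=0$ is available and already makes $\|[\D u_1]'\|_{L^2(0,1)}$ a norm on the first component, whereas the analogue $[\D u_1]'(0)=0$ fails at the $\dot H^2$ level, which is why the extra boundary term appears in Lemma~\ref{Prop:EquivNorms}.
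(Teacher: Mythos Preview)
Your proposal is correct and follows essentially the same approach as the paper: reduce to the two scalar equivalences via Lemma~\ref{Le:Aux_Norm} (plus the analogous weighted description $\|u(|\cdot|)\|^2_{H^{m_d-2}(\B)}\simeq\sum_{n=0}^{m_d-2}\|(\cdot)^{n+1}u^{(n)}\|^2_{L^2(0,1)}$), bound $\D$ directly from \eqref{Eq:DdSum}, invert via $\K$ and the Hardy-type computations of Appendix~\ref{App:Equivalent_norms} one level down, and conclude the boundary condition by density and $H^1(0,1)\hookrightarrow C[0,1]$. Your added remark on the Poincar\'e step and on why the $\rho=1$ boundary correction of Lemma~\ref{Prop:EquivNorms} is not needed here is a helpful clarification but does not depart from the paper's method.
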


\begin{proof}
The equivalence of the parts involving only $u_1$ follows from Lemma \ref{Le:Aux_Norm} and the proof of Lemma \ref{Prop:EquivNorms}. For the second component the same methods can be used to show that
 \[\| u(|\cdot|) \|^2_{H^{m_d-2}(\B)} \simeq \sum_{n=0}^{m_d-2} \| (\cdot)^{n+1} u^{(n)} \|^2_{L^2(0,1)} \simeq  \|\D u\|^2_{L^2(0,1)}\]
for all $u \in H_r^{m_d-2}(\B)$. 
The properties of $\D u_1$ are a consequence of the density of $\Ceven$ in $\mc {\tilde H}$ and the embedding $H^1(0,1) \hookrightarrow L^{\infty}(0,1)$. 
\end{proof}

We note that Eq.~\eqref{Eq:Comm_Rel_LambdaD}  and  Eq.~\eqref{Eq:Comm_Rel_LaplaceD} of Lemma \ref{Le:Comm_RelD} hold for all $u \in H^{m_d-2} \cap C^{m_d-1}(0,1)$ and $u \in H^{m_d-1} \cap C^{m_d}(0,1)$, respectively. For the moment, we leave the value of $p$ unspecified and consider $\tilde{\mb L}_0$ as defined in Eq.~\eqref{Def:L0} on the domain
 \begin{align*}
& \mc  D(\tilde{\mb L}_0) :=  \left \{  \mb u \in  \mc {\tilde H} \cap C^{\infty}(0,1)^2: \D u_1 \in C^2[0,1], \right. \\
& \left. \phantom{!!!!!!!!!!!!!!!!!!!!!!!!!!!!!!!} \D u_2 \in C^1[0,1],  (\D u_2)(0) = 0   \right \}.
\end{align*}
The perturbation $\mb L'$ is defined as in Eq.~\eqref{Def:Perturbation} and Lemma \ref{Le:Compact_Pert} holds with $\mc H$ replaced by $\mc {\tilde H}$. It is easy to check that 
\begin{align*}
\mathrm{Re}  \big (  \D (\tilde{\mb L}_0  \mb u)_1&  \big | \D u_1 \big )_{\dot H^1(0,1)}  \\
&  + \mathrm{Re} \big (\D (\tilde{\mb L}_0  \mb u)_2 \big | \D u_2\big )_{L^2(0,1)}  \leq  (\tfrac{1}{2} - \tfrac{2}{p-1})\norm{ \mb u } ^2_{D}
\end{align*}
for all $\mb u \in  \mc  D(\tilde{\mb L}_0)$ and that $\mathrm{rg}(\mu - \tilde{\mb L}_0)$ is dense in $\mc {\tilde H}$ for $\mu = 1 - \tfrac{2}{p-1}$ . With $\mb L_0$ denoting the closure of $\tilde{\mb L}_0$, we use the same arguments as in Section \ref{Sec:LinWellPosed} to infer that $\mb L := \mb {L}_0 + \mb L'$, $\mc D(\mb{L}) = \mc D (\mb L_0 )$, generates a strongly-continuous semigroup $(\mb S(\tau))_{\tau > 0}$ of bounded operators on $\mc {\tilde H}$. As an analogue to Lemma \ref{Le:Spectrum} we obtain the following result for the spectrum of $\mb L$, where we fix $p=3$.

\begin{lemma}
Let $p=3$. If $\lambda \in \sigma(\mb L)$ and $\mathrm{Re} \lambda > -\frac{1}{2}$, then $\lambda = 1$. Furthermore, it is an eigenvalue and $\mathrm{ker}(1 - \mb L) = \langle \mb g \rangle$, where $\mb g = (1,2)$.
\end{lemma}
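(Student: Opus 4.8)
The plan is to follow the proof of Lemma~\ref{Le:Spectrum} essentially verbatim; the only genuinely new point is that the endpoint analysis now takes place in the weaker space $\mc{\tilde H}$, where one controls $\D u_1$ only in $H^1(0,1)$ rather than $H^2(0,1)$.

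First I would reduce matters to the point spectrum. Specializing the Lumer--Phillips type estimate recorded just above the lemma to $p=3$ gives dissipativity with constant $\tfrac12-\tfrac{2}{p-1}=-\tfrac12$, so the semigroup generated by the closure $\mb L_0$ decays like $e^{-\tau/2}$ and hence $\sigma(\mb L_0)\subset\{\mathrm{Re}\,\lambda\le-\tfrac12\}$. Thus if $\mathrm{Re}\,\lambda>-\tfrac12$ then $\lambda\notin\sigma(\mb L_0)$, and the factorization $\lambda-\mb L=(1-\mb L'\,\mb R_{\mb L_0}(\lambda))(\lambda-\mb L_0)$ together with compactness of $\mb L'$ (the analogue of Lemma~\ref{Le:Compact_Pert} on $\mc{\tilde H}$) forces $\lambda\in\sigma_p(\mb L)$.

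Next I would set up the eigenvalue ODE. An eigenfunction $\mb u$ has first component $u_1\in C[0,1]\cap C^\infty(0,1)$ solving the same second order equation \eqref{Eq:Eigenvalue} as in Lemma~\ref{Le:Spectrum}, now at $p=3$ (so $pc_p^{p-1}=6$). Applying $\D$ and writing $w:=\D u_1$, which by Lemma~\ref{Le:Prop_Htilde} lies in $H^1(0,1)\cap C[0,1]$ with $w(0)=0$, yields \eqref{Eq:EigenvalueD} at $p=3$, and the substitution $z=\rho^2$, $v(z)=w(\sqrt z)$ gives the hypergeometric equation with $a=\tfrac12(\lambda-2)$, $b=\tfrac12(\lambda+3)$, $c=\tfrac12$, so $c-a-b=-\lambda$. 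Near $\rho=1$ the singular local solution behaves like $(1-\rho^2)^{-\lambda}$, with a logarithmic correction when $-\lambda\in\N_0$; since $\mathrm{Re}(c-a-b)=-\mathrm{Re}\,\lambda<\tfrac12$ this solution is never in $H^1(\tfrac12,1)$, so $v$ must be proportional to the local solution $v_1$ analytic at $z=1$. Expanding $v_1$ in the Frobenius basis at $z=0$, the condition $v(0)=0$ forces the connection coefficient $c_0=\Gamma(a+b+1-c)\Gamma(1-c)/[\Gamma(a+1-c)\Gamma(b+1-c)]$ to vanish; as $\Gamma$ has no zeros this needs $a+1-c=\tfrac12(\lambda-1)$ or $b+1-c=\tfrac12(\lambda+4)$ to be a nonpositive integer. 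The second alternative gives $\mathrm{Re}\,\lambda\le-4$, which is excluded, and the first gives $\lambda\in\{1,-1,-3,\dots\}$; combined with $\mathrm{Re}\,\lambda>-\tfrac12$ this leaves $\lambda=1$.

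Finally I would verify that $\lambda=1$ is attained and simple: a direct computation gives $(1-\mb L)\mb g=0$ for $\mb g=(1,2)=(1,\tfrac{p+1}{p-1})$, and since $\D g_1$, $\D g_2$ are positive multiples of $\rho$ one has $\mb g\in\mc D(\tilde{\mb L}_0)\subset\mc D(\mb L)$. For an arbitrary eigenfunction at $\lambda=1$ the above analysis forces $\D\tilde g_1$ to be a multiple of $v_0$, which at $\lambda=1$ (where $a+1-c=0$) reduces to $v_0(z)=z^{1/2}$, i.e.\ $\D\tilde g_1\propto\rho$; applying $\K$ yields $\tilde g_1=\beta g_1$, and the eigenvalue equation then forces $\tilde g_2=\beta g_2$, so $\ker(1-\mb L)=\langle\mb g\rangle$. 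The one step that needs genuine care beyond Lemma~\ref{Le:Spectrum} is the endpoint analysis at $\rho=1$ in the $H^1$-topology: one must confirm, running through the logarithmic and integer-difference cases of $c-a-b$, that the weaker admissibility requirement does not admit a spurious eigenvalue with $\mathrm{Re}\,\lambda>-\tfrac12$.
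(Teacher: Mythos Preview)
Your proposal follows the same path as the paper, and the endpoint analysis at $\rho=1$ in $H^1$ (rather than $H^2$) is handled correctly. However, you misidentify where the genuinely new work lies. The paper does \emph{not} flag the $H^1$-endpoint analysis as the delicate step; instead it singles out the regularity of eigenfunctions in the weaker space. In $\mc{\tilde H}$ one only has $u_1\in H_r^{m_d-1}(\B)$, so your claim that ``$u_1\in C[0,1]\cap C^\infty(0,1)$'' is not justified: $m_d-1=\tfrac{d-1}{2}<\tfrac{d}{2}$, so Sobolev embedding does not give continuity at the origin, and more importantly, for $d=5$ one has $m_d-1=2$ and the one-dimensional embedding only yields $u_1\in C^1[\delta,1]$, which is not enough to interpret the second-order eigenvalue ODE classically on $(0,1)$. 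The paper handles $d\ge7$ directly via the embedding $u_1\in C^{m_d-2}[\delta,1]\subset C^2[\delta,1]$, and for $d=5$ it invokes the definition of the closure $\mb L$ to upgrade to $u_1\in H^3_{\mathrm{loc}}(0,1)\cap C^2(0,1)$, $u_2\in H^2_{\mathrm{loc}}(0,1)\cap C^1(0,1)$. You need to supply this step.

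Two small further remarks: the claim $u_1\in C[0,1]$ is in any case irrelevant---what you actually use (and correctly cite from Lemma~\ref{Le:Prop_Htilde}) is $\D u_1\in C[0,1]$ with $\D u_1(0)=0$; and in the logarithmic case the condition should read $\lambda\in\N_0$ (i.e., $c-a-b=-\lambda$ a nonpositive integer), not $-\lambda\in\N_0$.
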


\begin{proof}
The assumptions on $\lambda$ imply that $\lambda \not \in \sigma(\mb L_0)$ and the fact that $\lambda$ is an eigenvalue follows from the compactness of $\mb L'$. By definition, if $\mb u \in  \mc {\tilde H}$, then $u_1$ and $u_2$ are $m_d-1$--times, respectively, $m_d-2$--times weakly differentiable. Sobolev embedding yields $u_1 \in C^{m_d-2}[\delta,1]$, $u_2 \in  C^{m_d-3}[\delta,1]$ for arbitrary $\delta > 0$. For $d \geq 7$, this already implies that eigenfunctions corresponding to the eigenvalue $\lambda$ satisfy the equation $(\lambda - \mb L) \mb u = 0$ in a classical sense on $(0,1)$. For $d=5$, one can use the definition of the closure to check that $u_1 \in H^{3}_{\mathrm{loc}}(0,1) \cap C^2(0,1)$, $u_2 \in H^{2}_{\mathrm{loc}}(0,1) \cap C^1(0,1)$ if $\mb u \in \mc D(\mb L)$. Hence, for all $d \geq 5$ odd, the first component of an eigenfunction solves Eq.~\eqref{Eq:Eigenvalue} in a classical sense on $(0,1)$. By smoothness of the coefficients on the open interval, we get that $u_1 \in C^{\infty}(0,1)$ and the application of $\D$ shows that $w := \D u_1$ solves Eq.~\eqref{Eq:EigenvalueD} on $(0,1)$. By Lemma \ref{Le:Prop_Htilde}, $w \in H^1(0,1) \cap C[0,1]$ and $w(0) = 0$.  We now argue as in the proof of Lemma \ref{Le:Spectrum}, cf.~also \cite{DonSch12}, to infer that $\lambda =1$ and that the corresponding eigenspace is spanned by $\mb g$.
\end{proof}

With similar arguments as in the proof of Lemma \ref{Le:LinearTimeEvol} we obtain the following result.
\begin{lemma}\label{Le:LinearTheory_Th2}
Let $p=3$. There exists a projection $\mb P \subset \mc B(\mc {\tilde H})$, $\mathrm{rg} \mb P = \langle \mb g \rangle$, that commutes with $\mb S(\tau)$ and 
\[ \mb S(\tau) \mb P \mb f = e^{\tau} \mb P \mb f  \]
for all $\mb f \in \mc {\tilde H}$ and all $\tau > 0$. Moreover,
\[ \norm{ (1 - \mb P) \mb S(\tau) \mb f }  \leq M e^{-(\frac12 - \varepsilon) \tau} \norm{ (1 - \mb P) \mb f  } \]
for all $\mb f \in \mc {\tilde H}$, $\tau > 0$, some $M \geq 1$ and some small  $\varepsilon >0$. 
\end{lemma}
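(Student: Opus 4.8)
The plan is to transcribe the proof of Lemma~\ref{Le:LinearTimeEvol} to the space $\mc{\tilde H}$ with $p=3$, the only numerical change being the dissipativity constant: in the new topology the Lumer--Phillips estimate reads $\mathrm{Re}(\tilde{\mb L}_0\mb u\,|\,\mb u)_D\le(\tfrac12-\tfrac{2}{p-1})\norm{\mb u}_D^2$, which for $p=3$ equals $-\tfrac12\norm{\mb u}_D^2$, so $\mb S_0$ obeys $\norm{\mb S_0(\tau)\mb f}\le M e^{-\tau/2}\norm{\mb f}$ and, by the Hadamard formula for the spectral radius, $r(\mb S_0(\tau))\le e^{-\tau/2}$ for all $\tau>0$. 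Since by the preceding lemma the only spectral point of $\mb L$ with $\mathrm{Re}\lambda>-\tfrac12$ is the isolated eigenvalue $\lambda=1$, I would define the Riesz projection $\mb P=\tfrac{1}{2\pi\i}\int_\gamma\mb R_{\mb L}(\lambda)\,d\lambda$ with $\gamma$ a small positively oriented circle around $1$. The standard theory (\cite{kato}) then yields $\mb P\in\mc B(\mc{\tilde H})$, that $\mb P$ commutes with $\mb L$, its resolvent and hence with $\mb S(\tau)$, that $\mc{\tilde H}=\ker\mb P\oplus\rg\mb P$, and that $\sigma(\mb L|_{\rg\mb P})=\{1\}$ while $\sigma(\mb L|_{\ker\mb P})=\sigma(\mb L)\setminus\{1\}$.

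Next I would prove $\rg\mb P=\langle\mb g\rangle$. The inclusion $\langle\mb g\rangle\subset\rg\mb P$ is immediate. For the reverse, $\rg\mb P$ is finite-dimensional: otherwise $1$ would belong to the essential spectrum of $\mb L$, which is invariant under the compact perturbation $\mb L'$ (cf.~Lemma~\ref{Le:Compact_Pert} in $\mc{\tilde H}$ and \cite{kato}), contradicting $1\notin\sigma(\mb L_0)$. Then $\rg\mb P\subset\mc D(\mb L)$ (closedness of $\mb L$ together with $\mb P\mc D(\mb L)\subset\mc D(\mb L)$), so $1-\mb L|_{\rg\mb P}$ is a nilpotent operator on a finite-dimensional space; if its index $k$ were $\ge 2$ there would be a nontrivial $\mb u\in\rg\mb P$ with $(1-\mb L)\mb u=\alpha\mb g$, $\alpha\neq 0$, and applying $\D$ to the first component (for $p=3$) would give an inhomogeneous hypergeometric-type ODE for $w:=\D u_1$ with $w(0)=0$ and $w\in H^1(0,1)$ (Lemma~\ref{Le:Prop_Htilde}); a variation-of-constants computation combined with the continuity of $w$ at $\rho=1$ shows this is impossible, exactly as in the proof of Lemma~\ref{Le:LinearTimeEvol}. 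Hence $k=1$ and $\rg\mb P=\ker(1-\mb L)=\langle\mb g\rangle$, which in particular gives $\mb S(\tau)\mb P\mb f=e^\tau\mb P\mb f$.

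For the decay estimate I would argue via the spectral radius as in Lemma~\ref{Le:LinearTimeEvol}. By the Duhamel formula $(1-\mb P)\mb S(\tau)=\mb S_0(\tau)+\int_0^\tau\mb S_0(\tau-\tau')\mb L'\mb S(\tau')\,d\tau'-\mb P\mb S(\tau)$, so with $\mb L'$ compact and $\mb P$ of finite rank, for each $\tau>0$ the operator $(1-\mb P)\mb S(\tau)$ differs from $\mb S_0(\tau)$ by a compact operator. Suppose $r((1-\mb P)\mb S(\tau))>e^{-\tau/2}$ for some $\tau$; then there is a spectral point $\mu$ with $|\mu|=r((1-\mb P)\mb S(\tau))=e^{(-1/2+\alpha)\tau}$, $\alpha>0$, and since $\mu\notin\sigma(\mb S_0(\tau))$ it is an eigenvalue of $(1-\mb P)\mb S(\tau)$; by the spectral mapping theorem for the point spectrum (\cite{engel}) the generator of the corresponding semigroup, namely $\mb L|_{\ker\mb P}$, would have an eigenvalue $\lambda$ with $\mathrm{Re}\lambda=-\tfrac12+\alpha>-\tfrac12$, contradicting the fact that the only eigenvalue of $\mb L$ in that half-plane is $1$, which lies in $\rg\mb P$. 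Therefore $r((1-\mb P)\mb S(\tau))\le e^{-\tau/2}$ for all $\tau>0$, so the growth bound of $(1-\mb P)\mb S(\tau)$ is $\le -\tfrac12$, and for every small $\varepsilon>0$ there is $M\ge 1$ with $\norm{(1-\mb P)\mb S(\tau)\mb f}\le M e^{-(\frac12-\varepsilon)\tau}\norm{(1-\mb P)\mb f}$.

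The main obstacle is the second step -- ruling out a nontrivial Jordan block at $\lambda=1$, i.e.\ establishing $\rg\mb P=\langle\mb g\rangle$ rather than merely $\rg\mb P\supset\langle\mb g\rangle$ -- since this requires re-examining the associated inhomogeneous ODE in the weaker $H^1$--topology of $\mc{\tilde H}$ and checking that the obstruction to solvability persists; the remaining steps are routine transcriptions of the corresponding arguments for $\mc H$.
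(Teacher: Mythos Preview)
Your proposal is correct and follows exactly the approach the paper intends: the paper's own ``proof'' of Lemma~\ref{Le:LinearTheory_Th2} consists of the single sentence ``With similar arguments as in the proof of Lemma~\ref{Le:LinearTimeEvol} we obtain the following result'', and your write-up is precisely such a transcription, with the correct numerical adjustment $\tfrac12-\tfrac{2}{p-1}=-\tfrac12$ for $p=3$. Your identified ``main obstacle'' (the Jordan-block exclusion in the weaker topology) is in fact not an obstacle: the variation-of-constants argument in Lemma~\ref{Le:LinearTimeEvol} only uses that $w=\D u_1$ is bounded (continuous) near $\rho=1$, and since $\D u_1\in H^1(0,1)\hookrightarrow C[0,1]$ by Lemma~\ref{Le:Prop_Htilde}, the same contradiction with $\int_0^1 s^2(1-s^2)\,ds>0$ goes through verbatim.
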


\subsubsection{Lipschitz estimates for the nonlinearity}
In the following, $\mathcal B$ denotes the unit ball in $ \mc {\tilde H}$. For $p=3$, the nonlinear remainder is given by
\[ \mb N(\mb u)  := \left ( \begin{array}{c} 0 
\\ u_1^3 + 3 c_p u_1^2 \end{array} \right ). \] 

\begin{lemma}\label{Le:Nonlin_Th2}
The operator $\mb N$: $\mc {\tilde H}  \to \mc {\tilde H} $ satisfies
\begin{align*}
\norm{ \mb N(\mb u) - \mb N(\mb v) }  \lesssim( \norm{\mb u}  +\norm{ \mb v } )  \norm{ \mb u  -\mb v }
\end{align*}
for all $\mb u, \mb v \in \mathcal B \subset \mc {\tilde H}$.
\end{lemma}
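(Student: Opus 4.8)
The plan is to reduce the vector-valued Lipschitz bound to a scalar Sobolev estimate and then to establish the latter by multilinear H\"older--Sobolev inequalities, exactly as announced in Section~\ref{Sec:Outline}.

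\emph{Step 1 (Reduction).} Since $N(0)=0$, it suffices to prove the Lipschitz estimate; the mapping property $\mb N(\mc {\tilde H})\subset\mc {\tilde H}$ then follows by taking $\mb v=0$. Writing $N(x):=x^3+3c_px^2$, the definition of $\norm{\cdot}$ and of $\mb N$ gives $\norm{\mb N(\mb u)-\mb N(\mb v)}=\|N(u_1)(|\cdot|)-N(v_1)(|\cdot|)\|_{H^{m_d-2}(\B)}$, whereas $\norm{\mb u}\ge\|u_1(|\cdot|)\|_{H^{m_d-1}(\B)}$ and $\norm{\mb u-\mb v}\ge\|u_1(|\cdot|)-v_1(|\cdot|)\|_{H^{m_d-1}(\B)}$. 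Hence it is enough to show, for radial $u,v$ with $\|u(|\cdot|)\|_{H^{m_d-1}(\B)},\|v(|\cdot|)\|_{H^{m_d-1}(\B)}\le1$,
\[ \|N(u)(|\cdot|)-N(v)(|\cdot|)\|_{H^{m_d-2}(\B)}\lesssim\big(\|u(|\cdot|)\|_{H^{m_d-1}(\B)}+\|v(|\cdot|)\|_{H^{m_d-1}(\B)}\big)\|u(|\cdot|)-v(|\cdot|)\|_{H^{m_d-1}(\B)}, \]
and by density of $\Ceven$ in $H_r^{m_d-1}(\B)$ we may take $u,v$ smooth. Factoring $N(x)-N(y)=(x-y)\big[x^2+xy+y^2+3c_p(x+y)\big]$ turns $N(u)-N(v)$ into a finite sum of products, each containing the factor $u-v$ and otherwise a product of at most two factors from $\{u,v\}$.

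\emph{Step 2 (Multilinear Sobolev estimates).} It now suffices to prove that for radial functions on $\B$
\[ \|fg\|_{H^{m_d-2}(\B)}\lesssim\|f\|_{H^{m_d-1}(\B)}\|g\|_{H^{m_d-1}(\B)},\qquad \|fgh\|_{H^{m_d-2}(\B)}\lesssim\|f\|_{H^{m_d-1}(\B)}\|g\|_{H^{m_d-1}(\B)}\|h\|_{H^{m_d-1}(\B)}, \]
products of radial functions again being radial. For the trilinear bound, expand $\partial^\alpha(fgh)$ for $|\alpha|\le m_d-2$ by the Leibniz rule into a sum of terms $\partial^{\beta_1}f\,\partial^{\beta_2}g\,\partial^{\beta_3}h$ with $|\beta_1|+|\beta_2|+|\beta_3|\le m_d-2$, estimate each such term in $L^2(\B)$ by H\"older with exponents $p_1,p_2,p_3$ satisfying $p_1^{-1}+p_2^{-1}+p_3^{-1}=\tfrac12$, and apply the Sobolev embeddings $H^{m_d-1-|\beta_j|}(\B)\hookrightarrow L^{p_j}(\B)$. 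These require $p_j^{-1}\ge\tfrac12-\tfrac{m_d-1-|\beta_j|}{d}$, and since $1\le m_d-1-|\beta_j|\le m_d-1=\tfrac{d-1}{2}<\tfrac d2$ all factors are strictly subcritical, so no endpoint arises. Admissible exponents exist because the sum of these lower bounds equals $\tfrac32-\tfrac{3(m_d-1)-(|\beta_1|+|\beta_2|+|\beta_3|)}{d}\le\tfrac32-\tfrac{3(m_d-1)-(m_d-2)}{d}=\tfrac32-\tfrac{2m_d-1}{d}=\tfrac12$, using $2m_d-1=d$; one then raises the remaining reciprocals to hit $\tfrac12$ exactly. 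The bilinear estimate is the same count with extra slack.

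\emph{Step 3 (Conclusion and main difficulty).} Applying the bilinear estimate to $3c_p(u\pm v)(u-v)$ and the trilinear estimate to $(u-v)u^2$, $(u-v)uv$, $(u-v)v^2$, summing, and absorbing the superfluous factor in the cubic contributions via $\|u(|\cdot|)\|_{H^{m_d-1}(\B)},\|v(|\cdot|)\|_{H^{m_d-1}(\B)}\le1$, yields the scalar estimate of Step~1 and hence the lemma. The only delicate point is the bookkeeping of H\"older exponents in Step~2; the tightest case is $d=5$, where $m_d-2=1$ and one splits, e.g., $H^2(\B_5)\hookrightarrow L^{10}(\B_5)$ for undifferentiated factors and $H^1(\B_5)\hookrightarrow L^{10/3}(\B_5)$ for a once-differentiated factor, with $\tfrac{3}{10}+\tfrac1{10}+\tfrac1{10}=\tfrac12$. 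Beyond this the argument is routine, so no genuine obstacle remains.
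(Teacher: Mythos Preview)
Your proof is correct and follows essentially the same approach as the paper: factor $N(u)-N(v)$, expand derivatives via Leibniz, and close with H\"older's inequality combined with the Sobolev embeddings $H^{m_d-1-|\beta|}(\B)\hookrightarrow L^{q}(\B)$. The only difference is organizational---you abstract out clean bilinear and trilinear product estimates and verify the exponent count once, whereas the paper writes out explicit H\"older exponents case by case; the substance is identical.
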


\begin{proof}
For  $u\in H_r^{m_d-1}(\B)$, $m_d-1 = \frac{d-1}{2}$, we set $\hat u(\xi) := u(|\xi|)$, $\xi \in \R^d$. In the following we do not indicate the domain in the Sobolev norms, since it is always the unit ball $\B \subset \R^d$. The Sobolev embedding $W^{j+m, 2} \hookrightarrow W^{j,q}$ for $2 \leq q \leq \frac{2d}{d-2m}$ implies that
\[ \| \partial^{\alpha} \hat u \|_{L^{q}} \lesssim \|  \hat u \|_{H^{m_d-1} } \]
for $\alpha \in \N^d$, $0 \leq |\alpha| \leq \frac{d-3}{2}$ and $2 < q \leq \frac{2d}{1+2|\alpha|}$. We first consider the cubic part of the nonlinearity. To estimate the $L^2$--part we use H\"older's inequality with $q_1 = \frac{2d}{d-2}$, $q_2=2d$, $ \frac{1}{q_1} + \frac{2}{q_2}  = \frac{1}{2}$, to show that 
\begin{align*}
 \| \hat u^3  - \hat v^3  & \|_{L^2} = \| (\hat u - \hat v)( \hat u^2 + \hat v^2 + \hat u \hat v) \|_{L^2} \\ \lesssim 
&  \| \hat u - \hat v \|_{L^{q_1}}( \| \hat u\|^2_{L^{q_2}}  + \| \hat v\|^2_{L^{q_2}}   +
\| \hat u\|_{L^{q_2}} \| \hat v\|_{L^{q_2}} )  \\
& \lesssim ( \| \hat u\|_{H^{m_d-1} }^2 + \| \hat v\|_{H^{m_d-1} }^2 ) \|\hat u - \hat v\|_{H^{m_d-1} }.
\end{align*}
For higher order derivatives  we have to estimate terms of the form 
\[ \partial^{\beta}(\hat u - \hat v) \partial^{\alpha-\beta}( \hat u^2 + \hat v^2 + \hat u \hat v),\]
for $0 \leq \beta \leq \alpha$.
For $|\alpha| = \frac{d-3}{2}$, $\beta = \alpha$, we apply again H\"older's inequality and Sobolev embedding to get for example
\begin{align*}
\|(\partial^{\alpha}\hat u - \partial^{\alpha}\hat v)  \hat u^2\|_{L^2}  & \lesssim \|\partial^{\alpha}(\hat u - \hat v) \|_{L^{q_1}} \| \hat u\|^2_{L^{q_2}}    \lesssim\|\hat u - \hat v\|_{H^{m_d-1} } \| \hat u\|_{H^{m_d-1} }^2
\end{align*} 
for $q_1 = \frac{2d}{d-2}$, $q_2 = 2d$.  Since $\partial^{\alpha}\hat u^2$ is equal to a sum of terms of the form $ \partial^{\alpha_1} \hat u \partial^{\alpha_2} \hat u$, where $\alpha_1 + \alpha_2 = \alpha$, we infer that for $\beta =0$,
\begin{align*}
\|(\hat u - \hat v) \partial^{\alpha_1} \hat u  \partial^{\alpha_2} \hat u \|_{L^2}  & 
\lesssim \|\hat u - \hat v\|_{L^{q_1}} \|\partial^{\alpha_1} \hat u\|_{L^{q_2}} \|\partial^{\alpha_2} \hat u\|_{L^{q_3}} \\
&   \lesssim 
\|\hat u - \hat v\|_{H^{m_d-1} } \| \hat u\|_{H^{m_d-1} }^2,
\end{align*} 
where $q_1 = 2d$, $q_2 = \frac{2d}{1+2|\alpha_1|}$, $q_3= \frac{2d}{1+2|\alpha_2|}$, $\sum_{j=1}^3 \frac{1}{q_j} = \frac{1}{2}$. All other terms can be estimated similarly. 

For the quadratic part of the nonlinearity we set for example $q_1=2d, q_2 = \frac{2d}{d-1}$, to get
\begin{align*}
\| (\hat u - \hat v)( \hat u + \hat v) \|_{L^2}  & \lesssim  \| \hat u - \hat v \|_{L^{q_1}} \| \hat u + \hat v\|_{L^{q_2}}  \\
& \lesssim \|\hat u + \hat v\|_{H^{m_d-1} } \|\hat u - \hat v\|_{H^{m_d-1} }
\end{align*}
or, for $|\alpha| = \frac{d-3}{2}$, $q_1 =2d$,  $q_2 = \frac{2d}{d-1}$,
\begin{align*}
\| (\hat u - \hat v) \partial^{\alpha} \hat u \|_{L^2}  & \lesssim  \| \hat u - \hat v \|_{L^{q_1}} \| \partial^{\alpha}\hat u \|_{L^{q_2}}  \lesssim \|\hat u + \hat v\|_{H^{m_d-1} } \|\hat u\|_{H^{m_d-1} }.
\end{align*}
Estimates for the remaining terms follow from similar considerations. 
\end{proof}
With Lemma \ref{Le:LinearTheory_Th2} and Lemma \ref{Le:Nonlin_Th2}, Theorem \ref{Th:Main2} follows by proceeding as above, starting with Section \ref{Sec:NonlinearCP}. 

\appendix

\section{Hardy's inequality}

\begin{lemma}\label{Le:Hardy}
Let $\alpha \in \N$. Assume that $f \in C^{\infty}[0,1]$ satisfies $f^{(j)}(0) = 0$ for $j = 0, \dots ,\alpha-1$. 
Then,
\[ \|  (\cdot)^{-\alpha} f \|_{L^2} \lesssim \|  (\cdot)^{-\alpha+1} f' \|_{L^2}. \]
\end{lemma}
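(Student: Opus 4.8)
The plan is to prove this by the classical integration-by-parts argument for Hardy-type inequalities, carefully tracking the boundary terms. Set $I := \int_0^1 x^{-2\alpha} f(x)^2\, dx$. Since $f \in C^{\infty}[0,1]$ with $f^{(j)}(0) = 0$ for $j = 0,\dots,\alpha-1$, Taylor's theorem gives $|f(x)| \lesssim x^{\alpha}$ and $|f'(x)| \lesssim x^{\alpha-1}$ on $[0,1]$; hence both $(\cdot)^{-\alpha} f$ and $(\cdot)^{-\alpha+1} f'$ are bounded on $(0,1)$, so in particular $I < \infty$ and the right-hand side of the claimed estimate is finite. Because $\alpha \in \N$, the exponent $-2\alpha+1$ is never zero, so $\frac{d}{dx}\big(\tfrac{x^{-2\alpha+1}}{-2\alpha+1}\big) = x^{-2\alpha}$, which is what makes the integration by parts possible.

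First I would integrate by parts on $[\varepsilon,1]$ for $\varepsilon > 0$:
\begin{align*}
\int_\varepsilon^1 x^{-2\alpha} f(x)^2\, dx = \Big[\tfrac{x^{-2\alpha+1}}{-2\alpha+1} f(x)^2\Big]_\varepsilon^1 - \tfrac{2}{-2\alpha+1}\int_\varepsilon^1 x^{-2\alpha+1} f(x) f'(x)\, dx.
\end{align*}
The boundary value at $x=1$ is $\tfrac{1}{-2\alpha+1} f(1)^2 \leq 0$ and may simply be discarded after moving it to the right, while the boundary value at $x=\varepsilon$ equals $\tfrac{\varepsilon^{-2\alpha+1}}{2\alpha-1} f(\varepsilon)^2 \lesssim \varepsilon^{-2\alpha+1}\varepsilon^{2\alpha} = \varepsilon \to 0$ as $\varepsilon \to 0^+$, using $|f(\varepsilon)| \lesssim \varepsilon^{\alpha}$. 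Passing to the limit $\varepsilon \to 0^+$ (monotone convergence on the left, dominated convergence for the cross term, whose integrand is bounded since $x^{-2\alpha+1}|f f'| \lesssim x^{-2\alpha+1}\cdot x^{\alpha}\cdot x^{\alpha-1} = 1$) yields
\begin{align*}
\int_0^1 x^{-2\alpha} f(x)^2\, dx \leq \tfrac{2}{2\alpha-1}\int_0^1 \big(x^{-\alpha} f(x)\big)\big(x^{-\alpha+1} f'(x)\big)\, dx.
\end{align*}

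Finally I would apply the Cauchy--Schwarz inequality to the right-hand side, obtaining
\[
\| (\cdot)^{-\alpha} f \|_{L^2}^2 \leq \tfrac{2}{2\alpha-1}\,\| (\cdot)^{-\alpha} f \|_{L^2}\,\| (\cdot)^{-\alpha+1} f' \|_{L^2},
\]
and since $\| (\cdot)^{-\alpha} f \|_{L^2} < \infty$ this factor can be divided out, giving $\| (\cdot)^{-\alpha} f \|_{L^2} \leq \tfrac{2}{2\alpha-1}\| (\cdot)^{-\alpha+1} f' \|_{L^2}$, which is the assertion. The only points demanding care are the vanishing of the boundary term at the origin and the a priori finiteness of $\| (\cdot)^{-\alpha} f \|_{L^2}$ needed to divide through; both follow immediately from the vanishing of $f$ to order $\alpha$ at $0$ via Taylor's theorem, so there is no substantial obstacle.
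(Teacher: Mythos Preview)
Your proof is correct and follows essentially the same route as the paper's: integration by parts to produce the cross term $\int_0^1 x^{-2\alpha+1} f f'\,dx$, then Cauchy--Schwarz and division by $\|(\cdot)^{-\alpha}f\|_{L^2}$. The only cosmetic difference is that the paper invokes l'Hospital's rule to kill the boundary term at the origin whereas you use Taylor's theorem, and you make the $[\varepsilon,1]$ cutoff and limit passage explicit; both amount to the same thing.
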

\begin{proof}
For $f = 0$, the assertion is trivial. Let $f \neq 0$. We use integration by parts, l'Hospital's rule and the Cauchy-Schwarz inequality to obtain the estimate
\begin{align*}
& \int_0^{1} \rho^{-2\alpha} |f(\rho)|^2 d \rho   \leq  \lim_{\rho\to 0} \left ( |2\alpha-1|^{-1} \rho^{-2\alpha+1} |f(\rho)|^2  \right)    \\
& + |2\alpha-1|^{-1} \int_0^{1}  \rho^{-2\alpha+1} ( f'(\rho) \overline{f(\rho)} + f(\rho) \overline{ f'(\rho)} )d\rho \\
&  \lesssim \int_0^{1} \rho^{-2\alpha+1}  \mathrm{Re}[{f'(\rho)\overline{f(\rho)}}] d\xi  \lesssim  \int_0^{1}
\rho^{-2\alpha+1}  |f'(\rho)||\overline{f(\rho)}| d\rho  \\
& \lesssim  \left( \int_0^{1}  \rho^{-2\alpha} |f(\rho)|^2 d\rho\right)^{1/2}  \left( \int_0^{1} \rho^{-2\alpha+2} |f'(\rho)|^2  d\rho \right)^{1/2}.
\end{align*}
This implies the claim.
\end{proof}

\section{Proof of Lemma \ref{Le:Aux_Norm}}\label{Proof:Lemma_Aux_Norm}

It suffices to show that the claimed inequality holds for all $\mb u \in \Ceven^2$. By density this can be extended to all of $\mc H$.  In the following we set
\begin{align*}
 \nabla_{\text{rad}}^n := \begin{cases} \Delta^{n/2}_{\rho}   & \text{ for } n \text{ even}, \\  
 \frac{d}{d\rho} \Delta^{(n-1)/2}_{\rho}     & \text{ for } n \text{ odd}, \end{cases} 
 \end{align*}
where $\Delta_{\rho}  u(\rho) = u''(\rho) + \tfrac{d-1}{\rho} u'(\rho)$. To abbreviate the notation we define
\begin{align}\label{Def:SigmajNorms}
\begin{split}
& \| u \|^2_{\Sigma_1} := \| u  \|^2_{L^2(0,1)} + \sum_{n=1}^{m_d} \| (\cdot)^{n-1} u^{(n)} \|^2_{L^2(0,1)},  \\
&\| u \|^2_{\Sigma_2} := \sum_{n=0}^{m_d-1} \| (\cdot)^{n} u^{(n)} \|^2_{L^2(0,1)}.
\end{split}
\end{align}
\begin{lemma}\label{Le:EqSigmaNorms}
Let $u \in \Ceven$. Then 
\begin{align*}
\| u \|_{\Sigma_1}  \lesssim \| u(|\cdot|) \|_{H^{m_d}(\B)} ,  \text{ and }  \quad \| u \|_{\Sigma_2} \lesssim \| u(|\cdot|) \|_{H^{m_d-1}(\B)}.
\end{align*}
\end{lemma}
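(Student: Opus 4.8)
The plan is to prove both inequalities by one and the same argument, whose engine is a weighted integration-by-parts identity --- a one-dimensional Hardy estimate in disguise. Write $\hat u := u(|\cdot|)$, which is smooth on $\overline{\B}$ since $u \in \Ceven$; in particular every $u^{(k)}$ is a bounded continuous function on $[0,1]$ and $\|\hat u\|_{H^{m_d}(\B)} < \infty$. The basic step is: for every exponent $a \geq 0$ and every $F \in C^1[0,1]$,
\[
  \int_0^1 \rho^a |F(\rho)|^2\, d\rho \;\lesssim\; |F(1)|^2 + \int_0^1 \rho^{a+2} |F'(\rho)|^2\, d\rho ,
\]
which follows by integrating $\int_0^1 \rho^a |F|^2$ by parts (writing $\rho^a = \tfrac{1}{a+1}(\rho^{a+1})'$), using that the boundary term at $\rho = 0$ vanishes because $a+1 > 0$ and $F$ is bounded, and absorbing the resulting cross term with the Cauchy--Schwarz and Young inequalities.

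Next I would iterate this step. For the $\Sigma_1$-part, apply it to the term $\int_0^1 \rho^{2n-2}|u^{(n)}|^2\, d\rho$, $1 \leq n \leq m_d$, with $F$ successively equal to $u^{(n)}, u^{(n+1)},\dots$: each application raises the weight exponent by $2$ and the order of the derivative by $1$, so after $m_d - n$ steps the weight exponent has grown from $2n-2$ to $2n-2 + 2(m_d-n) = d-1$, giving
\[
  \int_0^1 \rho^{2n-2}|u^{(n)}|^2\, d\rho \;\lesssim\; \sum_{k=n}^{m_d-1} |u^{(k)}(1)|^2 + \int_0^1 \rho^{d-1}|u^{(m_d)}|^2\, d\rho .
\]
The same chain applied to $\int_0^1 \rho^{2n}|u^{(n)}|^2\, d\rho$, $0 \leq n \leq m_d-1$ (the case $n=0$ being $\|u\|^2_{L^2(0,1)}$), terminates after $m_d-1-n$ steps at the weight $\rho^{2(m_d-1)} = \rho^{d-1}$ attached to $u^{(m_d-1)}$, yielding the $\Sigma_2$-analogue. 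The reason for the precise powers appearing in $\Sigma_1$ and $\Sigma_2$ is exactly that the starting exponents are subcritical, so the entire chain stays in the regime $a \geq 0$ and lands precisely on the critical weight $\rho^{d-1}$.

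It remains to control the two kinds of surviving terms. The boundary contributions are handled by the one-dimensional Sobolev embedding $H^{m_d}(\tfrac12,1) \hookrightarrow C^{m_d-1}[\tfrac12,1]$ together with the equivalence $\|u\|_{H^{m_d}(1/2,1)} \simeq \|\hat u\|_{H^{m_d}(\{1/2<|x|<1\})} \leq \|\hat u\|_{H^{m_d}(\B)}$, which holds because on the annulus $\tfrac12 < |x| < 1$ the weight $|x|^{d-1}$ is bounded above and below (similarly with $m_d - 1$ in place of $m_d$ for $\Sigma_2$); note that only derivatives up to order $m_d - 1$ (resp.\ $m_d - 2$) occur as boundary terms. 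For the top-order term I would use angular averaging: since $u$ is radial, $u^{(m_d)}(\rho) = (\omega \cdot \nabla)^{m_d}\hat u(\rho\omega)$ for every $\omega \in \S^{d-1}$, hence $|u^{(m_d)}(\rho)|^2 \lesssim \sum_{|\alpha|=m_d}|\partial^\alpha \hat u(\rho\omega)|^2$ for every such $\omega$, and averaging over $\omega$ and passing to polar coordinates gives
\[
  \int_0^1 \rho^{d-1}|u^{(m_d)}|^2\, d\rho = \frac{1}{|\S^{d-1}|}\int_{\S^{d-1}}\!\!\int_0^1 \rho^{d-1}|u^{(m_d)}(\rho)|^2\, d\rho\, dS(\omega) \;\lesssim\; \sum_{|\alpha|=m_d}\|\partial^\alpha \hat u\|^2_{L^2(\B)} \;\lesssim\; \|\hat u\|^2_{H^{m_d}(\B)} ,
\]
and likewise with $m_d$ replaced by $m_d - 1$. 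Summing over $n$ finishes both estimates.

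I expect the only genuinely delicate points to be (a) verifying that the boundary term at the origin vanishes at every step of the iteration --- this is what makes it unnecessary to impose any vanishing conditions on $u$ there, and it is precisely where the subcriticality of the weights in $\Sigma_1,\Sigma_2$ is used --- and (b) the observation that the top-order term truly requires the averaging identity and cannot be reduced to a purely one-dimensional bound. The remaining ingredients (the exponent bookkeeping, the annulus estimate, the Young absorption) are routine.
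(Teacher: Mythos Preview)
Your argument is correct and in fact cleaner than the paper's. The paper passes through the radial differential operators $\del^n$ (iterated radial Laplacians and their derivatives), first reducing $\|u(|\cdot|)\|_{H^{m_d}(\B)}$ to $\sum_{n} \int_0^1 \rho^{d-1} |\del^n u|^2\,d\rho$ via the pointwise relation between $|\nabla^n \hat u|$ and $\del^n u$, and then climbing from weight $\rho^{2(n-1)}$ to weight $\rho^{d-1}$ one step at a time; at each step it must distinguish the parity of the index, using integration by parts together with the trace theorem when the relevant $\del^k u$ need not vanish at the origin, and invoking the Hardy Lemma~\ref{Le:Hardy} when it does. Your route bypasses all of this: you work directly with the raw derivatives $u^{(n)}$, your single weighted integration-by-parts step needs only $a\ge 0$ and boundedness of $F$ (no vanishing at the origin, hence no parity case split), and the one place where the $d$-dimensional structure enters --- the top-order term $\int_0^1 \rho^{d-1}|u^{(m_d)}|^2\,d\rho$ --- is handled by the elementary identity $u^{(m_d)}(\rho)=(\omega\cdot\nabla)^{m_d}\hat u(\rho\omega)$ and angular averaging. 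What the paper's approach buys is that the $\del^n$ operators are already in play elsewhere (e.g.\ in Eq.~\eqref{Eq:Nablak} and in Lemma~\ref{Le:Extension_Bounds}), so the machinery is amortized; what yours buys is a self-contained and shorter proof of this particular lemma with no appeal to Lemma~\ref{Le:Hardy}.
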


\begin{proof}
We prove the first estimate. Let $ \nabla := (\partial_{1}, \dots, \partial_{d} )^T$. Then
\begin{align*}
 \sum_{n=0}^{m_d} \int_0^{1} \rho^{d-1} | \del^n u(\rho)|^2 d\rho  & \simeq \sum_{n=0}^{m_d} \int_{\B} | \nabla^n u(|\xi|)|^2 d\xi \\
 & \lesssim \sum_{|\alpha| \leq m_d} \|\partial^{\alpha} u(|\cdot|) \|^2_{L^2(\B)} =  \| u(|\cdot|) \|^2_{H^{m_d}(\B)}. 
\end{align*}
In view of this, it suffices to show that 
\begin{align}\label{Eq:Est3}
\begin{split}
\| u \|^2_{\Sigma_1}\lesssim \sum_{n=0}^{m_d} \int_0^{1} \rho^{d-1} | \del^n u(\rho)|^2 d\rho.
\end{split}
\end{align}
First, observe that 
\begin{align*}
& \| u \|^2_{L^2(0,1)} + \sum_{n=1}^{m_d} \| (\cdot)^{n-1} u^{(n)} \|^2_{L^2(0,1)} \\
&  \lesssim \| u\|^2_{L^2(0,1)}  +   \int_0^{1}  \rho^{d-1} |\del^{m_d} u(\rho)|^2 d\rho   + \sum_{n=1}^{m_d-1 } \int_0^{1}  \rho^{2(n-1)} |u^{(n)}(\rho)|^2 d\rho \\
& \lesssim  \int_0^{1} |u(\rho)|^2  +\int_0^{1}  \rho^{d-1} |\del^{m_d} u(\rho)|^2 d\rho  +  \sum_{n=1}^{m_d-1 } \int_0^{1} \rho^{2(n-1)} |\del^n u(\rho)|^2 d\rho.
\end{align*}
We show that 
\begin{align}\label{Eq:Est4}
\int_0^{1} \rho^{2(n-1)} |\del^n u(\rho)|^2 d\rho \lesssim \sum_{j=0}^{m_d} \int_0^{1} \rho^{d-1} | \del^j u(\rho)|^2 d\rho 
\end{align}
for all $n=1, \dots, m_d-1$. Recall that for radial functions the trace theorem, cf. for example \cite{Evans}, p.~258, implies that
\begin{align*}
  |\del^n u(1) |^2 \lesssim  \int_0^1 \rho^{d-1} |\del^n u(\rho)|^2 d\rho + \int_0^1 \rho^{d-1} |\del^{n+1} u(\rho)|^2 d\rho
 \end{align*}
for all $n = 0, \dots m_d-1$. Assume $m_d$ is odd and let $n = m_d-1$. Then integration by parts and the Cauchy inequality imply that
\begin{align*}
& \int_0^{1}  \rho^{d-3}    |\del^{m_d-1} u(\rho)|^2 d\rho    \\
& \lesssim \left |[\del^{m_d-1} u](1) \right |^2   + \int_0^{1}  \rho^{d-2} |\del^{m_d-1}  u(\rho)||\del^{m_d} u_1(\rho)|d\rho    \\
&  \lesssim  \left |[\del^{m_d-1} u](1) \right |^2   +  \frac{1}{\varepsilon}  \int_0^{1}  \rho^{d-1} |\del^{m_d}  u(\rho)|^2 d\rho   +  \varepsilon   \int_0^{1}  \rho^{d-3} |\del^{m_d-1}  u(\rho)|^2 d\rho,
\end{align*}
for any $\varepsilon > 0$.
For $m_d$ even one can easily check that the function $(\cdot)^{d-1} \del^{m_d-1} u$ satisfies the assumption of Lemma \ref{Le:Hardy} for $\alpha = (d+1)/2$. Hence, Hardy's inequality can be applied to obtain 
\begin{align*}
  \int_0^{1}  \rho^{d-3} &  |\del^{m_d-1}u(\rho)|^2 d\rho  =\int_0^{1}  \rho^{-d-1} | \rho^{d-1}  [ \del^{m_d-2} u]'(\rho)|^2 d\rho   \\
 & \lesssim \int_0^{1}  \rho^{-d+1} \left | \left  ( \rho^{d-1} [\del^{m_d-2}u ]'(\rho) \right )' \right |^2 d\rho 
 \lesssim \int_0^{1}  \rho^{d-1}  | \del^{m_d} u(\rho)|^2 d\rho. 
\end{align*}

Now these arguments can be iterated to get Eq.~\eqref{Eq:Est4}.  Finally,
\begin{align*}
\int_0^1 & |u(\rho)|^2 d\rho \lesssim |u(1)|^2 + \int_0^1 \rho |u(\rho)| |u'(\rho)| d\rho  \\
& \lesssim  |u(1)|^2  + \frac{1}{\varepsilon} \int_0^1 |u'(\rho)|^2 d\rho +   \varepsilon \ \int_0^1 |u(\rho)|^2 d\rho,
\end{align*}
for any $\varepsilon > 0$ and the first line of Eq.~\eqref{Eq:Est3} follows. The second estimate in Lemma \ref{Le:EqSigmaNorms} can be obtained analogously.
\end{proof}

\begin{lemma}\label{Le:Extension_Bounds}
Let  $u \in \Ceven$.  There exists a compactly supported function $U \in C^{m_d}[0,\infty)$ such that $U(\rho) = u(\rho)$ for all $\rho \in [0,1]$ and
\begin{align*}
 \| u(|\cdot|) \|_{H^{m_d}(\B)}  \lesssim \| U(|\cdot|)  \|_{H^{m_d}(\R^d)}  \lesssim \| u \|_{\Sigma_1}.
\end{align*}
Similarly, one can construct an extension $\tilde U \in C^{m_d-1}[0,\infty)$ such that $\tilde U(\rho) = u(\rho)$ on $[0,1]$ and
\begin{align*}
 \| u(|\cdot|) \|_{H^{m_d-1}(\B)}  \lesssim \| \tilde U(|\cdot|)  \|_{H^{m_d-1}(\R^d)}  \lesssim \| u \|_{\Sigma_2}.
\end{align*}
\end{lemma}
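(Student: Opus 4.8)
The plan is to build a Hestenes-type reflection at the endpoint $\rho=1$ which only samples $u$ on a fixed subinterval $[\tfrac12,1]$ bounded away from the origin. This way the singular behaviour of $u$ near $\rho=0$ never enters the reflected part, the contribution of the extension is controlled by ordinary (unweighted) one–dimensional Sobolev norms on $[\tfrac12,1]$, and the part over $\B$ is handled by the exact bookkeeping of the radial chain rule.

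\emph{Construction of $U$.} Fix the $m_d+1$ distinct numbers $\lambda_k:=1/k$, $k=1,\dots,m_d+1$, all in $(0,1]$, and a cut-off $\chi\in C^\infty[0,\infty)$ with $\chi\equiv1$ on $[0,\tfrac54]$ and $\supp\chi\subset[0,\tfrac32]$. Since the Vandermonde matrix $\big((-\lambda_k)^j\big)_{0\le j\le m_d,\,1\le k\le m_d+1}$ is invertible, choose constants $c_1,\dots,c_{m_d+1}$ (depending only on $d$) with $\sum_k c_k(-\lambda_k)^j=1$ for $j=0,\dots,m_d$, and set
\begin{align*}
U(\rho):=\begin{cases} u(\rho), & 0\le\rho\le1,\\ \sum_{k=1}^{m_d+1}c_k\,\chi(\rho)\,u\big(1-\lambda_k(\rho-1)\big), & \rho>1.\end{cases}
\end{align*}
For $\rho\in[1,\tfrac32]$ and $\lambda_k\le1$ one has $1-\lambda_k(\rho-1)\in[\tfrac12,1]$, so the second line is well defined; $U$ is smooth on $[0,1)$ and on $(1,\infty)$, is supported in $[0,\tfrac32]$, equals $u$ on $[0,1]$, and since $\chi\equiv1$ near $\rho=1$ the matching conditions give $U^{(j)}(1^+)=\sum_k c_k(-\lambda_k)^j u^{(j)}(1)=u^{(j)}(1)=U^{(j)}(1^-)$ for $j=0,\dots,m_d$, hence $U\in C^{m_d}[0,\infty)$. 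Because $u\in C^\infty_e[0,1]$, the function $U(|\cdot|)$ is smooth near the origin and $C^{m_d}$ elsewhere, so $U(|\cdot|)\in C^{m_d}(\R^d)$ with compact support. The first inequality of the lemma is then immediate: $\|u(|\cdot|)\|_{H^{m_d}(\B)}=\|U(|\cdot|)\|_{H^{m_d}(\B)}\le\|U(|\cdot|)\|_{H^{m_d}(\R^d)}$.

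\emph{The bound $\|U(|\cdot|)\|_{H^{m_d}(\R^d)}\lesssim\|u\|_{\Sigma_1}$.} Split $\R^d=\B\cup(\R^d\setminus\B)$. Over $\B$, the chain rule gives $|\partial^\alpha(u(|x|))|\lesssim\sum_{j=1}^{|\alpha|}|x|^{j-|\alpha|}|u^{(j)}(|x|)|$ for $1\le|\alpha|\le m_d$, so $\|\partial^\alpha(u(|\cdot|))\|_{L^2(\B)}^2\lesssim\sum_{j=1}^{|\alpha|}\int_0^1\rho^{\,d-1+2(j-|\alpha|)}|u^{(j)}(\rho)|^2\,d\rho$; since $m_d=\tfrac{d+1}2$ the exponent obeys $d-1+2(j-|\alpha|)\ge d-1+2(j-m_d)=2(j-1)\ge0$, whence $\rho^{\,d-1+2(j-|\alpha|)}\le\rho^{2(j-1)}$ on $(0,1)$ and the integral is $\le\|(\cdot)^{j-1}u^{(j)}\|_{L^2(0,1)}^2\le\|u\|_{\Sigma_1}^2$; together with $\|u(|\cdot|)\|_{L^2(\B)}^2=\int_0^1\rho^{d-1}|u|^2\le\|u\|_{\Sigma_1}^2$ this yields $\|u(|\cdot|)\|_{H^{m_d}(\B)}^2\lesssim\|u\|_{\Sigma_1}^2$ (which is exactly the converse of Lemma~\ref{Le:EqSigmaNorms}). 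Over $\R^d\setminus\B$, $U(|\cdot|)$ is supported in the annulus $\{1<|x|<\tfrac32\}$, on which all radial weights are comparable to $1$, so $\|U(|\cdot|)\|_{H^{m_d}(\R^d\setminus\B)}^2\lesssim\sum_{j=0}^{m_d}\int_1^{3/2}|U^{(j)}(\rho)|^2\,d\rho$. Differentiating the explicit formula, $|U^{(j)}(\rho)|\lesssim\sum_k\sum_{i=0}^j|u^{(i)}(1-\lambda_k(\rho-1))|$, and the substitution $s=1-\lambda_k(\rho-1)$ gives $\int_1^{3/2}|u^{(i)}(1-\lambda_k(\rho-1))|^2\,d\rho=\lambda_k^{-1}\int_{1-\lambda_k/2}^1|u^{(i)}|^2\le\lambda_k^{-1}\int_{1/2}^1|u^{(i)}|^2$; since the weight $\rho^{2(i-1)}$ (resp. $1$ when $i=0$) is bounded below on $[\tfrac12,1]$, this is $\lesssim\|(\cdot)^{i-1}u^{(i)}\|_{L^2(0,1)}^2\le\|u\|_{\Sigma_1}^2$ (resp. $\le\|u\|_{L^2(0,1)}^2\le\|u\|_{\Sigma_1}^2$). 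Adding the two contributions proves the claim, and with it the first part of the lemma.

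\emph{The lower-regularity extension and the main difficulty.} For $\tilde U$ one runs the identical construction with $m_d$ replaced by $m_d-1$ throughout: $m_d$ distinct nodes $\lambda_1,\dots,\lambda_{m_d}$ and matching conditions for $j=0,\dots,m_d-1$ produce $\tilde U\in C^{m_d-1}[0,\infty)$, and in the estimate over $\B$ the relevant exponent bound becomes $d-1+2(j-|\alpha|)\ge2j$ for $|\alpha|\le m_d-1$, which is precisely the weight $\rho^{2j}$ occurring in $\|u\|_{\Sigma_2}$; the annulus estimate is unchanged. The one point that genuinely requires care is the design of the reflection: a naive extension that "spreads" $u$ over $(1,2)$ would force one to bound $\int_0^1|u^{(m_d)}|^2$ by $\|u\|_{\Sigma_1}$, which is \emph{false} — the mass of $u^{(m_d)}$ may concentrate near $\rho=0$, where the weight $\rho^{m_d-1}$ degenerates (a one-parameter family of bumps shows $\|u_n\|_{\Sigma_1}\to0$ while $\int_0^1|u_n^{(m_d)}|^2\to\infty$). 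Keeping the reflected part supported in a fixed neighbourhood $[\tfrac12,1]$ of the endpoint, where all weights are harmless, is what makes the argument go through; the second delicate ingredient is the weight bookkeeping over $\B$, which hinges on the identity $2(m_d-1)=d-1$, so that the top-order weight in $\Sigma_1$ is exactly the one produced by $\partial^\alpha(u(|\cdot|))$ with $|\alpha|=m_d$.
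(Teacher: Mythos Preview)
Your proof is correct and self-contained. The approach, however, differs from the paper's in two respects.

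\textbf{Extension.} You use a Hestenes multi-reflection $\sum_k c_k\,u(1-\lambda_k(\rho-1))$, solving a Vandermonde system to match $m_d+1$ derivatives at $\rho=1$. The paper instead takes a single reflection $\pm u(2-\rho)$ and repairs the parity mismatch at $\rho=1$ by adding a finite Taylor polynomial in $(\rho-1)$ with coefficients $u^{(2n)}(1)$ (or $u^{(2n-1)}(1)$, depending on the parity of $m_d$). Both constructions sample $u$ only on $[\tfrac12,1]$, which --- as you rightly emphasise --- is the crucial point; the paper's version additionally produces explicit boundary terms $|u^{(j)}(1)|^2$, which it then controls by the one-dimensional Sobolev embedding on $(\delta,1)$.

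\textbf{Estimation.} You bound every $\partial^\alpha U(|\cdot|)$ directly via the radial chain rule and the weight inequality $\rho^{\,d-1+2(j-|\alpha|)}\le\rho^{2(j-1)}$, which is exactly the identity $2(m_d-1)=d-1$ you highlight. The paper instead passes through the Fourier characterisation of $H^{m_d}(\R^d)$ to reduce to controlling only $\|U(|\cdot|)\|_{L^2(\R^d)}$ and $\|\nabla^{m_d}U(|\cdot|)\|_{L^2(\R^d)}$, then handles the top-order piece on $\B$ via the expansion $\nabla_{\mathrm{rad}}^{m_d}u=\sum_n c_n\rho^{\,n-m_d}u^{(n)}$.

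Your route is a bit more elementary (no Fourier step, no separate bookkeeping of endpoint values), at the cost of having to run the chain-rule estimate for every multi-index $\alpha$. The paper's route isolates the essential top-order computation but needs the auxiliary Taylor terms and the Fourier interpolation. Either way, the substance is the same: keep the reflected part away from the origin so that the degenerate weights in $\Sigma_1$, $\Sigma_2$ never bite.
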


\begin{proof}
For $m \in \N$, let $f \in C^{\infty}[0,1]$ and let $\varphi \in C^{\infty}[0,\infty)$ be a monotonically decreasing function such that $\varphi = 1$ on $[0,\frac{5}{4}]$ and $\varphi = 0$ on $[\frac{3}{2}, \infty)$. We define
\begin{equation*}
\mc E_m f(\rho) :=  \begin{cases} f(\rho) &\mbox{for } \rho \in [0,1] \\ 
\varphi(\rho) h_m(\rho) & \mbox{for } \rho \in (1,3/2) \\
0 & \mbox{for } \rho \in  [3/2,\infty) \end{cases} 
\end{equation*}
where
\begin{align*}
\begin{split}
 h_m(\rho) :=  \begin{dcases}  -f(2-\rho) +  \sum_{n = 0}^{(m-1)/2} \frac{2(\rho-1)^{2n}  }{(2n)!}  f^{(2n)}(1)  & m \text { is odd,} \\ 
f(2-\rho) +  \sum_{n = 1}^{m/2} \frac{2(\rho-1)^{2n-1}  }{(2n-1)!}  f ^{(2n-1)}(1) & m \text { is even.}  \end{dcases}. 
\end{split}
\end{align*}
Then, $\mc E_m f \in C^{m}[0,\infty)$ and $\mc E_m f|_{[0,1]} = f$. We define $U := \mc E_{m_d} u$.
Our aim is to prove the estimate
\begin{align}\label{Eq:Est1}
\begin{split}
\|U(|\cdot|)\|^2_{L^2(\R^d)} + \|\nabla^{m_d} U(|\cdot|) \|^2_{L^2(\R^d)}  &  \lesssim \| u\|^2_{\Sigma_1}.
\end{split}
\end{align}
Given that Eq.~\eqref{Eq:Est1} holds,  we can use the fact that $H^{m}(\R^d)$ can be equivalently defined in terms of the Fourier transform to infer that
\begin{align*}
\| u(|\cdot|) \|^2_{H^{m_d}(\B)}  & \leq  \| U(|\cdot|) \|^2_{H^{m_d}(\R^d)}  \lesssim  \| \langle \cdot  \rangle ^{m_d} \mc F[ U(|\cdot|) ] \|^2_{L^2(\R^d)} \\
 & \lesssim  \|U(|\cdot|) \|^2_{L^2(\R^d)} + \|\nabla^{m_d} U(|\cdot|) \|^2_{L^2(\R^d)}  \lesssim \|  u\|^2_{\Sigma_1}.
\end{align*}
If $m_d$ is odd we estimate
\begin{align*}
\begin{split}
& \|U(|\cdot|)   \|^2_{L^2(\R^d)}   \lesssim \int_{0}^{\infty} \rho^{d-1} |U(\rho)|^2 d\rho \\
& =  \int_{0}^{1} \rho^{d-1} |u(\rho)|^2 d\rho  + \int_{1}^{\frac{3}{2}} \rho^{d-1}  |\varphi(\rho) h_{m_d}(\rho)|^2 d\rho   \\
&  \lesssim  \| u \|^2_{L^2(0,1)} + \| \varphi \|^2_{L^{\infty}[0,\infty)} \bigg ( \int_1^{\frac{3}{2}} \rho^{d-1} |u(2-\rho)|^2 d\rho +   \sum_{n = 0}^{(m_d-1)/2} |u^{(2n)}(1)|^2 \bigg ) \\
&  \lesssim  \| u \|^2_{L^2(0,1)} + \int_{\frac12}^{1}  |u(\rho)|^2 d\rho   + \sum_{n = 0}^{(m_d-1)/2} |u^{(2n)}(1)|^2  \lesssim 
\| u \|^2_{\Sigma_1},
\end{split}
\end{align*}
where we used the fact that 
\begin{align*}
|u^{(n)} (1)|^2 \lesssim  \|u \|^2_{H^{m_d}(\delta, 1)}  \lesssim \| u\|^2_{L^2(0,1)} + \sum_{j=1}^{m_d} \| (\cdot)^{j-1} u^{(j)} \|^2_{L^2(0,1)}  
\end{align*}
for $n = 0, \dots, m_d - 1$.  For the derivative we get
 \begin{align*}  \int_{0}^{\infty} \rho^{d-1} |\del^{m_d}  U_1(\rho)|^2 d\rho  =  & \int_0^1 \rho^{d-1} |\del^{m_d}  u(\rho)|^2 d\rho   \\
 & + \int_1^{\frac32} \rho^{d-1}  \left |\del ^{m_d} \left [\varphi(\rho) h_{m_d}(\rho) \right] \right |^2 d\rho.
\end{align*}
To bound the first integral we exploit the fact that for  $d \geq 5$ odd and $m \in \N$ there exist constants $c_j^{(d,m)} \in \R$ such that
\begin{align}\label{Eq:Nablak}
\del^{m} u(\rho) = \sum_{n=1}^{m} c_n^{(d,m)} \rho^{n-m} u^{(n)}(\rho).
\end{align}
Hence,
\begin{align*}
\int_0^1 \rho^{d-1} |\del ^{m_d}  u(\rho)|^2 d\rho \lesssim \sum_{n=1}^{m_d} \int_0^1 |\rho^{n-1} u^{(n)}(\rho)|^2 d\rho \lesssim \| u \|^2_{\Sigma_1}.
\end{align*}
It remains to estimate the second term. It follows from Eq.~\eqref{Eq:Nablak} and the Leibniz rule that there exist constants $c_{n,j} ^{(d)} \in \R$ such that
\begin{align*}
\nabla_{\rho} ^{m_d} \left [\varphi(\rho) h_{m_d}(\rho) \right]  = \sum_{n=1}^{m_d}  \sum_{j=0}^{n} c_{n,j}^{(d)} \rho^{n-m_d} \varphi^{(n-j)}(\rho) h_{m_d}^{(j)} (\rho).
\end{align*}
Hence,
\begin{align*}
 \int_1^{\frac32} \rho^{d-1}  &  \left |\del ^{m_d} \left [\varphi(\rho) h_{m_d}(\rho) \right] \right |^2 d\rho     \\
 &  \lesssim \sum_{n=1}^{m_d}  \sum_{j=0}^{n} \int_1^{\frac32} \rho^{2n-2}  \left  |\varphi^{(n-j)}(\rho) h_{m_d}^{(j)} (\rho)\right |^2 d\rho  \\
 & \lesssim  \left ( \sum_{n=0}^{m_d} \|\varphi^{(n)}\|^2_{L^{\infty}[0,\infty)}  \right) \left( \sum_{n=0}^{m_d} \int_1^{\frac32} \rho^{d-1}  \left  | h_{m_d}^{(n)} (\rho)\right |^2 d\rho  \right )  \\
 & \lesssim   \sum_{n=0}^{m_d}  \int_1^{\frac32}  | u^{(n)}(2-\rho)|^2  d\rho +  \sum_{n=0}^{(m_d-1)/2}  \ |u^{(2n)}(1)|^2 \\
  & \lesssim   \sum_{n=0}^{m_d}  \int_{\frac12}^{1}  | u^{(n)}(\rho)|^2  d\rho + \|  u \|^2_{\Sigma_1} \lesssim \| u \|^2_{\Sigma_1}.
\end{align*}
If $m_d$ is even the proof works similarly. In fact, the extension was constructed in such a way, that the boundary terms involve only derivatives that can be bounded by the $\Sigma$--norm. The proof for the second estimate is analogous.
\end{proof}


\section{Proof of Lemma \ref{Prop:EquivNorms}}\label{App:Equivalent_norms}

Again, it suffices to prove the inequality for all  $\mb u \in \Ceven^2$.
We split the proof into several lemmas and use the result of Lemma \ref{Le:Aux_Norm}.
With the definition \eqref{Def:SigmajNorms} we set
\[ \| \mb u \|^2_{\Sigma} := \| u_1 \|^2_{\Sigma_1} +\| u_2 \|^2_{\Sigma_2} \] 
for $\mb u = (u_1,u_2)  \in \Ceven^2$.

\begin{lemma}
We have that $\| \mb u \|_{D} \lesssim \| \mb u \|_{\Sigma}$,
for all $\mb u \in \Ceven^2$.
\end{lemma}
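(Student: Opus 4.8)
The plan is to expand $\D u_1$ and $\D u_2$ via Leibniz's rule, note that every term produced by one or two further differentiations is (up to a constant) one of the weighted derivatives occurring in $\|\cdot\|_{\Sigma_1}$ or $\|\cdot\|_{\Sigma_2}$, and then handle the boundary term by a local Sobolev embedding. Since we may assume $\mb u \in \Ceven^2$, no regularity subtleties arise. Recall from Eq.~\eqref{Eq:DdSum} that $\D u(\rho)=\sum_{n=0}^{m_d-2}a_n\rho^{n+1}u^{(n)}(\rho)$ with $a_n>0$.

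For the $\dot H^2$--part I would differentiate twice,
\[
[\D u_1]''(\rho)=\sum_{n=0}^{m_d-2}a_n\big(\rho^{n+1}u_1^{(n+2)}(\rho)+2(n+1)\rho^{n}u_1^{(n+1)}(\rho)+n(n+1)\rho^{n-1}u_1^{(n)}(\rho)\big),
\]
and observe that each summand is a constant multiple of $\rho^{j-1}u_1^{(j)}(\rho)$ with $1\le j\le m_d$ (the last group is empty for $n=0$), so the triangle inequality gives $\|[\D u_1]''\|_{L^2(0,1)}\lesssim\sum_{j=1}^{m_d}\|(\cdot)^{j-1}u_1^{(j)}\|_{L^2(0,1)}\lesssim\|u_1\|_{\Sigma_1}$. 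Likewise
\[
[\D u_2]'(\rho)=\sum_{n=0}^{m_d-2}a_n\big(\rho^{n+1}u_2^{(n+1)}(\rho)+(n+1)\rho^{n}u_2^{(n)}(\rho)\big),
\]
whose summands are constant multiples of $\rho^{j}u_2^{(j)}(\rho)$ with $0\le j\le m_d-1$, giving $\|[\D u_2]'\|_{L^2(0,1)}\lesssim\|u_2\|_{\Sigma_2}$.

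For the boundary contribution, evaluating the analogous expansions of $[\D u_1]'$ and $\D u_2$ at $\rho=1$ shows that $[\D u_1]'(1)+[\D u_2](1)$ is a fixed linear combination of $u_1^{(j)}(1)$, $0\le j\le m_d-1$, and $u_2^{(j)}(1)$, $0\le j\le m_d-2$. On $[\tfrac12,1]$ the weights $\rho^{j-1}$ and $\rho^{j}$ are bounded below, so $\|u_1\|_{H^{m_d}(\frac12,1)}\lesssim\|u_1\|_{\Sigma_1}$ and $\|u_2\|_{H^{m_d-1}(\frac12,1)}\lesssim\|u_2\|_{\Sigma_2}$; the embedding $H^m(\tfrac12,1)\hookrightarrow C^{m-1}[\tfrac12,1]$ then yields $|u_1^{(j)}(1)|\lesssim\|u_1\|_{\Sigma_1}$ for $j\le m_d-1$ and $|u_2^{(j)}(1)|\lesssim\|u_2\|_{\Sigma_2}$ for $j\le m_d-2$. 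Combining the three bounds gives $\|\mb u\|_D\lesssim\|\mb u\|_\Sigma$.

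There is no genuine obstacle here; the only point requiring a moment's attention is the power counting in the Leibniz expansion, i.e. verifying that differentiating $\rho^{n+1}u^{(n)}$ at most twice never produces a weight more singular than $\rho^{n-1}$ while raising the derivative order accordingly, so that the output always stays within the $\Sigma$--scaling. This works precisely because $\D$ raises the weight exponent of the $n$-th derivative to $n+1$, leaving exactly enough room for the two derivatives in the $D$--norm.
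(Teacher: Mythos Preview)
Your proposal is correct and follows essentially the same route as the paper: expand $\D u_j$ via Eq.~\eqref{Eq:DdSum}, differentiate, and apply the triangle inequality to land back in the $\Sigma$--norm. The only cosmetic difference is the treatment of the boundary term: the paper bounds $|[\D u_1]'(1)|$ and $|\D u_2(1)|$ via the global embedding $H^1(0,1)\hookrightarrow L^\infty(0,1)$ (using $\D u_2(0)=0$) and then controls $\|[\D u_1]'\|_{L^2}$ separately, whereas you evaluate at $\rho=1$ and invoke a local Sobolev embedding on $[\tfrac12,1]$; both are equally direct.
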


\begin{proof}
With Eq.~\eqref{Eq:DdSum} and the triangle inequality we immediately obtain 
\begin{align*}
\begin{split}
\| \D u_1 \|_{\dot H^2(0,1)}^2 \lesssim \sum_{n=1}^{m_d} \| (\cdot)^{n-1} u_1^{(n)} \|^2_{L^2(0,1)}, \\
\| \D u_2 \|_{\dot H^1(0,1)}^2 \lesssim \sum_{n=0}^{m_d-1} \| (\cdot)^{n} u_2^{(n)} \|^2_{L^2(0,1)}.
\end{split}
\end{align*}
We use the Sobolev embedding $H^1(0,1) \hookrightarrow L^{\infty}(0,1)$ and the fact that $\D u_2(0) = 0$ to infer that 
\begin{align*} 
\begin{split}
\big | [\D u_1]'(1) + \D u_2(1) \big |^2  & \lesssim \big  | [\D u_1]'(1) \big |^2 + \big |\D u_2(1) \big |^2  \\
&  \lesssim \| [\D u_1]' \|^2_{H^1(0,1)} + \| \D u_2\|^2_{\dot H^1(0,1)}.
\end{split}
\end{align*}
Now, 
\begin{align*}
\begin{split}
\| [\D u_1]' \|^2_{L^2(0,1)} \lesssim & \sum_{n=0}^{m_d-1} \| (\cdot)^{n} u_1^{(n)} \|^2_{L^2(0,1)} \\
&  \lesssim  \| u_1 \|^2_{L^2(0,1)} +  \sum_{n=1}^{m_d} \| (\cdot)^{n-1} u_1^{(n)} \|^2_{L^2(0,1)},  
\end{split}
\end{align*}
which implies the claim.
\end{proof}

\begin{lemma} For all  $\mb u \in \Ceven^2$ the following inequality holds
\[\| \D u_1 \|^2_{H^2(0,1)} +  \| \D u_2 \|^2_{H^1(0,1)}   \lesssim   \| {\mb u} \|^2_{D}.\]
\end{lemma}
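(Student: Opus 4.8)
The plan is to recover the full inhomogeneous norms of $\D u_1$ and $\D u_2$ from the two top-order seminorms and the single boundary quantity that make up $\|\cdot\|_D$, using only that $\D u_1$ and $\D u_2$ vanish at the origin. Abbreviate $w_j := \D u_j$. For $\mb u \in \Ceven^2$, formula \eqref{Eq:DdSum} exhibits $w_j$ as a finite sum $\sum_n a_n (\cdot)^{n+1} u_j^{(n)}$ of smooth functions, so $w_1, w_2 \in C^\infty[0,1]$, and reading off the lowest-order term gives $w_1(0) = w_2(0) = 0$. Hence every pointwise evaluation below is classical.

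First I would dispose of the second component. Since $w_2(0) = 0$, the fundamental theorem of calculus and the Cauchy--Schwarz inequality give $\|w_2\|_{L^\infty(0,1)} \leq \|w_2'\|_{L^1(0,1)} \leq \|w_2'\|_{L^2(0,1)}$; in particular $\|\D u_2\|^2_{H^1(0,1)} = \|w_2\|^2_{L^2(0,1)} + \|w_2'\|^2_{L^2(0,1)} \lesssim \|w_2'\|^2_{L^2(0,1)} \leq \|\mb u\|_D^2$. The same computation controls the coupling term: $|w_2(1)| \leq \|w_2'\|_{L^2(0,1)}$, so by the triangle inequality
\[ |w_1'(1)| \leq \big| w_1'(1) + w_2(1) \big| + |w_2(1)| \lesssim \|\mb u\|_D . \]
This is the only place the boundary summand of $\|\cdot\|_D$ enters, and it is precisely what pins down the value $w_1'(1)$.

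Next I would treat the first component. From $w_1'(\rho) = w_1'(1) - \int_\rho^1 w_1''(s)\,ds$ we get $\|w_1'\|_{L^\infty(0,1)} \leq |w_1'(1)| + \|w_1''\|_{L^2(0,1)}$, and then from $w_1(0) = 0$, $\|w_1\|_{L^\infty(0,1)} \leq \|w_1'\|_{L^1(0,1)} \leq \|w_1'\|_{L^\infty(0,1)}$. Combining with the bound on $|w_1'(1)|$ from the previous step yields $\|w_1\|_{L^\infty(0,1)} + \|w_1'\|_{L^\infty(0,1)} \lesssim \|\mb u\|_D$, whence $\|\D u_1\|^2_{H^2(0,1)} = \|w_1\|^2_{L^2(0,1)} + \|w_1'\|^2_{L^2(0,1)} + \|w_1''\|^2_{L^2(0,1)} \lesssim \|\mb u\|_D^2$. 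Adding this to the estimate for $\D u_2$ gives the claim.

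There is no real obstacle here: the statement is the elementary principle that a top-order seminorm together with the correct boundary data controls the full norm on a bounded interval. The only point requiring a little care is that the boundary summand of $\|\cdot\|_D$ couples $w_1'(1)$ to $w_2(1)$ rather than controlling $w_1'(1)$ directly, so one must estimate $w_2$ first in order to free up $w_1'(1)$; once this ordering is respected the rest is bookkeeping. By density of $\Ceven^2$ in $\mc H$ the inequality then extends to all of $\mc H$, and combined with the bounds already established in this appendix it yields the norm equivalence asserted in Lemma \ref{Prop:EquivNorms}.
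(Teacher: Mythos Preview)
Your proof is correct and follows essentially the same approach as the paper: set $w_j=\D u_j$, use $w_j(0)=0$ together with the fundamental theorem of calculus to recover the lower-order $L^2$ norms from the top-order seminorms and the boundary quantity. The only cosmetic difference is the ordering---the paper estimates $|w_1'(\rho)+w_2(\rho)|$ directly from its value at $\rho=1$ and then separates, whereas you first control $w_2$ to extract $|w_1'(1)|$ from the boundary term; both routes amount to the same two applications of the fundamental theorem of calculus.
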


\begin{proof}
Set $w_j:=\D u_j$ for $j=1,2$. Since $w_j(0)=0$ for $j=1,2$, the above inequality is true if  
\begin{align*}
\|w_1'\|^2_{L^2(0,1)} \lesssim  |w_1'(1) + w_2(1)|^2 +\|w_1''\|^2_{L^2(0,1)} + \|w_2'\|^2_{L^2(0,1)}.
\end{align*}
By the fundamental theorem of calculus $\int_{\rho}^1 w_1''(s)ds = w_1'(1) - w_1'(\rho)$ and $\int_{\rho}^1 w_2'(s)ds = w_2(1) - w_2(\rho).$
Hence,
\[ |w'_1(\rho) + w_2(\rho)| \leq  |w'_1(1) + w_2(1)| + \|w_1''\|_{L^2(0,1)}  + \|w_2'\|_{L^2(0,1)}. \]
Using this together with Sobolev embedding we obtain
\begin{align*}
|w'_1(\rho)| & \leq |w'_1(\rho) + w_2(\rho)| + |w_2(\rho)|  \\
&  \lesssim|w'_1(1) + w_2(1)| + \|w_1''\|_{L^2}  + \|w_2'\|_{L^2(0,1)}.
\end{align*}
Squaring and integrating implies the claim.
\end{proof}

Lemma \ref{Prop:EquivNorms} follows from Lemma \ref{Le:Aux_Norm} in combination with the following result.
\begin{lemma} We have that 
\[ \| \mb u \|^2_{\Sigma}  \lesssim  \| \D u_1 \|^2_{H^2(0,1)} +   \| \D u_2 \|^2_{H^1(0,1)}, \]
for all $\mb u  \in \Ceven^2$.
\end{lemma}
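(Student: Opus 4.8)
The plan is to reduce, using density of $\Ceven$ in $H_r^{m_d}(\B)$ and in $H_r^{m_d-1}(\B)$, to $\mb u \in \Ceven^2$, and then to prove $\| u_1 \|^2_{\Sigma_1} \lesssim \| \D u_1 \|^2_{H^2(0,1)}$ and $\| u_2 \|^2_{\Sigma_2} \lesssim \| \D u_2 \|^2_{H^1(0,1)}$ separately; for $\mb u \in \Ceven^2$ all the quantities $\D u_j$, $[\D u_j]'$, $[\D u_j]''$ are classical and the identities of Lemma \ref{Le:Comm_RelD} apply. Two auxiliary equivalences will be used. First, by Lemma \ref{Le:Aux_Norm} together with the computations in the proof of Lemma \ref{Le:EqSigmaNorms}, $\| u_1 \|^2_{\Sigma_1} \simeq \| u_1(|\cdot|) \|^2_{H^{m_d}(\B)} \simeq \sum_{n=0}^{m_d} \int_0^1 \rho^{d-1} |\del^n u_1(\rho)|^2 \, d\rho$. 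Second, by the reasoning in the proofs of Lemma \ref{Le:EqSigmaNorms} and Lemma \ref{Le:Prop_Htilde}, $\| \D v \|^2_{L^2(0,1)} \simeq \| v(|\cdot|) \|^2_{H^{m_d-2}(\B)} \simeq \sum_{n=0}^{m_d-2} \int_0^1 \rho^{d-1} |\del^n v(\rho)|^2 \, d\rho$ for every $v \in \Ceven$.

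For the $\Sigma_1$ estimate I would bound $\| \D u_1 \|^2_{H^2(0,1)} \geq \| \D u_1 \|^2_{L^2(0,1)} + \| [\D u_1]'' \|^2_{L^2(0,1)}$ from below termwise. By the second auxiliary equivalence (with $v = u_1$), $\| \D u_1 \|^2_{L^2(0,1)} \simeq \sum_{n=0}^{m_d-2} \int_0^1 \rho^{d-1} |\del^n u_1|^2 \, d\rho$. For the other term, Lemma \ref{Le:Comm_RelD}, Eq.~\eqref{Eq:Comm_Rel_LaplaceD}, gives $[\D u_1]'' = \D(\Delta_\rho u_1)$, and $\Delta_\rho u_1 \in \Ceven$ since it is the restriction to the radius of the smooth radial function $\Delta (u_1(|\cdot|))$; hence, applying the second auxiliary equivalence with $v = \Delta_\rho u_1$ and using $\del^n \Delta_\rho = \del^{n+2}$, $\| [\D u_1]'' \|^2_{L^2(0,1)} = \| \D (\Delta_\rho u_1) \|^2_{L^2(0,1)} \simeq \sum_{n=0}^{m_d-2} \int_0^1 \rho^{d-1} |\del^{n+2} u_1|^2 \, d\rho$. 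Since $\{0,\dots,m_d-2\} \cup \{2,\dots,m_d\} = \{0,\dots,m_d\}$ (here $m_d \geq 3$ is used), adding gives $\| \D u_1 \|^2_{H^2(0,1)} \gtrsim \sum_{n=0}^{m_d} \int_0^1 \rho^{d-1} |\del^n u_1|^2 \, d\rho \simeq \| u_1 \|^2_{\Sigma_1}$. Passing through the commutator identity is essential here: a Leibniz expansion of $\rho^{m_d-1} u_1^{(m_d)}$ consists of terms individually singular at the origin, so a termwise estimate is not available.

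For the $\Sigma_2$ estimate only one derivative of $\D u_2$ is available, so the same device fails and Hardy's inequality (Lemma \ref{Le:Hardy}) is used instead. Since $\D$ is invertible on $\mc H$ with inverse $\K$, and $\K w(\rho) = \rho^{2-d} \mc K^{m_d-2} w(\rho)$, write $u_2 = \rho^{2-d} \mc K^{m_d-2} w$ with $w := \D u_2$; note $w(0)=0$ because every term in $\D u_2$ carries a positive power of $\rho$, so $w = O(\rho)$ near $\rho = 0$, and therefore $\mc K^q w$ vanishes to order $2q+1$ at the origin (each $\mc K$ raises the vanishing order by two). Expand $\| \rho^n u_2^{(n)} \|_{L^2(0,1)}$, $0 \leq n \leq m_d-1$, by the Leibniz rule; because at most one derivative of $w$ appears in $[\mc K^{m_d-2} w]^{(j)}$ for $j \leq m_d-1$ (the $m_d-2$ integral factors absorb the remaining derivatives), each resulting summand has the form $\rho^\beta \mc K^q w$, $\rho^\beta w$ or $\rho^\beta w'$. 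Applying Hardy's inequality repeatedly to the first type — legitimate because of the high vanishing order of $\mc K^q w$ — reduces it to terms of the form $\rho^{-1} w$ and $w'$, and a short bookkeeping of the exponents (using $n \leq m_d-1$) shows that on every surviving $w$-term the weight is $\geq -1$ and on every $w'$-term it is $\geq 0$. Hence $\| \rho^n u_2^{(n)} \|_{L^2(0,1)} \lesssim \| (\cdot)^{-1} w \|_{L^2(0,1)} + \| w' \|_{L^2(0,1)} \lesssim \| w' \|_{L^2(0,1)} \leq \| \D u_2 \|_{H^1(0,1)}$, the middle step being one more use of Hardy's inequality applied to $w$. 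Summing over $n$ yields $\| u_2 \|^2_{\Sigma_2} \lesssim \| \D u_2 \|^2_{H^1(0,1)}$.

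The step I expect to be the main obstacle is the exponent bookkeeping in the $\Sigma_2$ estimate: one must verify that, after the Leibniz expansion and the applications of Hardy's inequality, no weight drops below what $\| \D u_2 \|_{H^1(0,1)}$ can absorb (namely $\rho^{-1}$ on $w$ and $\rho^0$ on $w'$), which is exactly where $n \leq m_d-1$ and the precise vanishing orders of the $\mc K^q w$ enter; the analogous computation breaks down at order $m_d$, which is the reason the $\Sigma_1$ estimate has to be routed through the identity $[\D u_1]'' = \D(\Delta_\rho u_1)$ instead.
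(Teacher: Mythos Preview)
Your $\Sigma_2$ argument is essentially the paper's proof of its Eq.~\eqref{Eq:Est3_1}: write $u_2=\K w$, expand $\rho^n u_2^{(n)}$ via Leibniz into terms involving $\mc K^j w$, $w$, $w'$, and kill the iterated integrals with repeated Hardy. The paper organises the expansion slightly differently (it writes $\rho^n(\K w)^{(n)}$ directly as a linear combination of the $K_{d-2j}w$), but the mechanism and the exponent bookkeeping are the same.

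Your $\Sigma_1$ argument, however, takes a genuinely different route. The paper does \emph{not} use the intertwining relation $[\D u_1]''=\D(\Delta_\rho u_1)$; instead it writes $w=\D u_1$ as $w(\rho)=\rho w'(0)+\mc V^2 w''(\rho)$ with $\mc V f(\rho)=\int_0^\rho f$, so that $\K w=\K\mc V^2 w''+k_d\,w'(0)$, and then expands $\rho^{n-1}(\K w)^{(n)}$ in the $\mc K^j\mc V^2 w''$ and applies Hardy to land on $\|w''\|_{L^2}$. Your idea---reduce to the $L^2$-level equivalence $\|\D v\|_{L^2}^2\simeq\|v(|\cdot|)\|_{H^{m_d-2}(\B)}^2$ and apply it to $v=u_1$ and $v=\Delta_\rho u_1$---is cleaner conceptually, but be careful with the justification: the proof of Lemma~\ref{Le:Prop_Htilde} that you invoke for this equivalence itself cites Lemma~\ref{Prop:EquivNorms}, whose proof is precisely the appendix lemma you are establishing, so the reference is circular as written. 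The fix is easy: the direction you need, $\|\D v\|_{L^2}^2\gtrsim\sum_{n=0}^{m_d-2}\int_0^1\rho^{d-1}|\del^n v|^2$, follows by the very Hardy argument you carry out for $\Sigma_2$, run one regularity level lower (now $[\mc K^{m_d-2}w]^{(j)}$ with $j\le m_d-2$ involves at most $w$, not $w'$). Once that is supplied your approach is complete; it trades the paper's Taylor trick for an extra invocation of the intertwining identity, which is arguably more transparent but does not actually avoid the Hardy step.
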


\begin{proof}
We show that 
\begin{align}\label{Eq:Est3_1}
 \| (\cdot)^{n} u^{(n)} \|_{L^2(0,1)} \lesssim \| \D u \|_{H^1(0,1)} 
\end{align}
for $n = 0, \dots, m_d-1$ and all $u \in \Ceven$, and 
\begin{align}\label{Eq:Est3_2}
\| (\cdot)^{n-1} u^{(n)} \|_{L^2(0,1)} \lesssim C \| \D u \|_{H^2(0,1)} 
\end{align}
for $ n = 1, \dots, m_d$, by using the fact that $\K \D = I$ on $\Ceven$.  Let $u \in \Ceven$ and set $w := \D u$. Then, $w \in C^{\infty}[0,1]$, $w^{(2n)}(0) =0$, $n \in \N_0$, and  $\K w = u$. 
One can easily check that there exist constants $\alpha_{n,j}, \tilde \alpha_{n,j}  \in \R$ such that 
\begin{align*}
\rho^{n} (\K w)^{(n)}(\rho)  = \sum_{j = 0}^{n} \alpha_{n,j} K_{d-2j}w = \sum_{j=(d-3)/2-n}^{(d-3)/2} \tilde \alpha_{n,j} \rho^{-2j-1} \mc K^{j}w(\rho) 
\end{align*}
for $n= 0, \dots, m_d-2 = (d-3)/2$.
Furthermore,
\[  \rho^{m_d-1} (\K w)^{(m_d-1)}(\rho) =  \sum_{j=0}^{(d-3)/2}  \alpha_j \rho^{-2j-1} \mc K^{j}w(\rho)   +w'(\rho). \]
Since $w(0)=0$ and 
\[ \left[\tfrac{d^{k}}{d \rho^{k}} \mc K^{n}w \right](0) =0\]
for $n \in \N_0$, $k= 0,\dots, 2n$, repeated application of Hardy's inequality yields
\begin{align*}
\begin{split}
\int_0^{1} \rho^{-4n-2} | \mc K^n w(\rho)|^2 d\rho \leq C_n \int_0^{1} |w'(\rho)|^2 d\rho
\end{split}
\end{align*}
for $n \in \N_0$ and some constante $C_n >0$. This in particular implies that 
\[ \int_0^1 |\rho^{n} (\K w)^{(n)} (\rho)|^2 d\rho  \lesssim \int_0^{1} |w'(\rho)|^2 d\rho  \] 
which proves Eq.~\eqref{Eq:Est3_1}. By the fundamental theorem of calculus
\[ w(\rho) =  \rho w'(0) + \int_0^{\rho} \int_0^{s} w''(t) dt ds. \]
Upon setting $\mc Vw(\rho):= \int_0^{\rho} w(s)ds$  we infer that
\begin{align*}
\K w(\rho)  = \K \mc V^2w''(\rho) + k_d w'(0)
\end{align*}
for some constant $k_d > 0$.  Using this we obtain for $n= 1, \dots, m_d-2$ and  constants $\beta_n, \tilde \beta_n \in \R$,
\begin{align*}
\rho^{n-1}  (\K w)^{(n)}(\rho)   & = \sum_{j = 0}^{n} \beta_{n,j} \rho^{-1} K_{d-2j} \mc V^2w''(\rho)  \\
& = \sum_{j=(d-3)/2-n}^{(d-3)/2} \tilde \beta_{n,j} \rho^{-2j-2} \mc K^{j} \mc V^2w''(\rho) .
\end{align*}
 Furthermore,  
\begin{align*}
\rho^{m_d-2} (\K w)^{(m_d-1)} (\rho) = \sum_{j=0}^{(d-3)/2} \gamma_{j} \rho^{-2j-2} \mc K^{j} \mc V^2w''(\rho) + \rho^{-1} \mc V w''(\rho)
\end{align*}
and
\begin{align*}
\rho^{m_d-1} (\K w)^{(m_d)} (\rho) = \sum_{j=0}^{(d-3)/2} &  \tilde \gamma_{j}\rho^{-2j-2} \mc K^{j} \mc V^2w''(\rho) \\ 
&  + \gamma_d \rho^{-1} \mc V w''(\rho) + w''(\rho)
\end{align*}
for $ \tilde \gamma_{j}, \gamma_{j}, \gamma_d \in \R$. By repeated application of Hardy's inequality we can now show that 
\begin{align*}
\begin{split}
\int_0^{1} \rho^{-4n-4} | \mc K^{n} \mc V^2w''(\rho) )|^2 d\rho \leq C_n \int_0^{1} |w''(\rho)|^2 d\rho
\end{split}
\end{align*}
for $n \in \N$ and  a constant $C_n >0$. This implies Eq.~\eqref{Eq:Est3_2}.
\end{proof}

\pagestyle{plain}
\bibliography{references}
\bibliographystyle{plain}

\end{document}